\newtheorem{example}{Example}[section]
\newtheorem{remark}[example]{Remark}
\newtheorem{definition}[example]{Definition}
\newtheorem{theorem}[example]{Theorem}
\newtheorem{proposition}[example]{Proposition}
\newtheorem{corollary}[example]{Corollary}
\newtheorem{lemma}[example]{Lemma}
\font\ddpp=msbm10  scaled \magstep 1  %Caracteres "doble palo".
\newenvironment{proof}{\noindent{\bf Proof~:}}{\QED\medskip}
\def\QED{\hskip0.1em\hfill\null\ \null\nobreak\hfill
\kern3pt\lower1.8pt\vbox{\hrule\hbox {\vrule\kern1pt\vbox{\kern1.7pt
\hbox{$\scriptstyle QED$}\kern0.2pt}\kern1pt\vrule}\hrule}}
\def\R{\hbox{\ddpp R}}         %Numeros reales
\def\frg{{\frak g}}
\def\frh{{\frak h}}
\def\Re{{\frak R}{\frak e}\,}
\def\Im{{\frak I}{\frak m}\,}
\begin{document}

\title{\bf Contact 5-manifolds with $SU(2)$-structure}
\author{Luis C. de Andr\'es, Marisa Fern\'andez, Anna Fino and Luis Ugarte}

\date{\today}

\maketitle

\begin{abstract}
We consider $5$-manifolds with a contact form arising from a hypo
structure \cite{ConS}, which we call \emph{hypo-contact}. We provide
existence conditions for  such a structure on an oriented
hypersurface of a $6$-manifold with a half-flat $SU(3)$-structure.
For half-flat manifolds with a Killing vector field $X$ preserving
the $SU(3)$-structure we study the geometry of the orbits space.
Moreover, we describe the solvable Lie algebras admitting a
\emph{hypo-contact} structure. This allows us to exhibit examples of
Sasakian $\eta$-Einstein manifolds, as well as to prove that such
structures give rise to new metrics with holonomy $SU(3)$ and $G_2$.
\end{abstract}

\bigskip

\section{Introduction}

Recently, Conti and Salamon introduced in \cite{ConS} \emph{hypo
structures} on $5$-manifolds as a generalization in dimension $5$ of
Sasakian-Einstein metrics; indeed, Sasakian-Einstein metrics
correspond to Killing spinors and {\em hypo structures} are induced
by {\em generalized} Killing spinors. In terms of differential
forms, a {\em hypo structure} on a $5$-manifold $N$ is determined by
a quadruplet $(\eta,\omega_i, 1\leq i \leq 3)$ of differential
forms, where $\eta$ is a nowhere vanishing $1$-form and $\omega_i$
are $2$-forms on $N$ satisfying certain relations (see \eqref{rhypo}
in Section~\ref{hypo-halff}).

If the forms $\eta$ and $\omega_i$ satisfy
$$
  d\eta=-2\omega_3,  \quad d\omega_1=3\eta\wedge\omega_2,   \quad
  d\omega_2=-3\eta\wedge\omega_1,
$$
then $N$ is a Sasakian-Einstein manifold, that is, a Riemannian
manifold such that $N\times\mathbb R$ with the cone metric is
K\"ahler and Ricci flat \cite{BGal}. Thus $N\times\mathbb R$ has
holonomy contained in $SU(3)$ or, equivalently, it has an integrable
$SU(3)$-structure which means that there is an almost Hermitian
structure, with K\"ahler form $F$, and a $(3,0)$-form $\Psi =
\Psi_{+} + i \Psi_{-}$ on $N^5\times\mathbb R$ satisfying
$dF=d\Psi_+=d\Psi_-=0$. In the general case of a hypo structure, in
\cite{ConS} it is proved that a real analytic hypo structure on a
real analytic $5$-manifold $N$ can be lifted to an integrable
$SU(3)$-structure on $N\times I$, for some open interval $I$ or
equivalently that  $(\eta,\omega_i, 1\leq i \leq 3)$ belongs to a
one-parameter family of hypo structures $(\eta(t),\omega_i(t), 1\leq
i \leq 3)$ satisfying the evolution equations \eqref{vol} given in
Section \ref{hypo-halff}. Without assuming real analyticity no
general result is known.  Conversely, any oriented hypersurface of a
$6$-manifold with an integrable $SU(3)$-structure is naturally
endowed with a hypo structure (see Section \ref{hypo-halff} for
details).

In general, for a hypo $5$-manifold the $1$-form $\eta$ is not a
contact form. In this paper we deal with $5$-manifolds $N$ having a
{\em hypo-contact structure}, that is, a hypo structure
$(\eta,\omega_1,\omega_2,\omega_3)$ such that $d\eta=-2\omega_3$, so
$\eta$ is a contact form on $N$. Such structures were considered by
Conti   in \cite{Con} and  by Bedulli and Vezzoni in  \cite{BV},
where an explicit expression for the Ricci and  scalar curvature  is
given  in terms of torsion forms and its derivatives.

If we weaken the integrability condition of the $SU(3)$-structure
 $(F, \Psi_+, \Psi_-)$ on the $6$-manifold $M$ to be
half-flat in the sense of \cite{CS}, i.e.  $F\wedge F$ and $\Psi_+$
are closed, Hitchin in \cite{H} proved that there is a
$G_2$-structure on $M\times I$ with holonomy contained in $G_2$ if
the half-flat structure $(F,\Psi_+,\Psi_-)$ is such that certain
evolution equations admit a solution $(F(t),\Psi_+(t),\Psi_-(t))$,
for all real parameter $t$ lying in some   interval $I$,
 with $F(0)=F$, $\Psi_+(0)=\Psi_+$ and
$\Psi_-(0)=\Psi_-$.

Regarding {\em hypo-contact structures}, in Theorem
\ref{converse-hypoc-half-flat-0}  we provide conditions which imply
that there is a hypo-contact structure on any oriented hypersurface
$f\colon N\longrightarrow M$ of a half-flat manifold $M$; and when
$M$ has a Killing vector field preserving the $SU(3)$-structure, we
study the geometry of the orbits space. Moreover, in Proposition
\ref{from-hypo-to-half-flat} we  show  how to lift a hypo structure
on  a $5$-manifold $N$ to a half-flat structure on the total space
of a a  circle bundle over $N$.

Our main results concern
 solvable Lie groups of dimension $5$ with a left-invariant
hypo-contact structure. In particular, using such structures and
solving the corresponding evolution equations, we
 construct new metrics with holonomy $SU(3)$ and $G_2$.
In Section \ref{hypo-contact-solvable} the classification of
solvable Lie algebras with a hypo-contact structure is given,
showing the following theorem.

\begin{theorem}\label{clasification}
A $5$-dimensional solvable Lie algebra admits a hypo-contact
structure if and only if it is isomorphic to one of the following:
$$
\begin{array}{rcl}
\mathfrak h_{1} &\!\!\!\colon\!\!\!& [X_1,X_4]=[X_2,X_3]=X_5 ;\\[4pt]
\mathfrak h_{2} &\!\!\!\colon\!\!\!&
\frac{1}{2}[X_1,X_5]=[X_2,X_3]=X_1,\ [X_2,X_5]=X_2,\ [X_3,X_5]=X_3,\
[X_4,X_5]=-3X_4 ;\\[4pt]
\mathfrak h_{3} &\!\!\!\colon\!\!\!&
\frac{1}{2}[X_1,X_4]=[X_2,X_3]=X_1,\
[X_2,X_4]=[X_3,X_5]=X_2,\ [X_2,X_5]=-[X_3,X_4]=-X_3 ;\\[4pt]
\mathfrak h_{4} &\!\!\!\colon\!\!\!& [X_1,X_4]=X_1,\ [X_2,X_5]=X_2,\ [X_3,X_4]=[X_3,X_5]=-X_3 ;\\[4pt]
\mathfrak h_{5} &\!\!\!\colon\!\!\!& [X_1,X_5]=[X_2,X_4]=X_1,\
[X_3,X_4]=X_2,\ [X_3,X_5]=-X_3,\ [X_4,X_5]=X_4.
\end{array}
$$
\end{theorem}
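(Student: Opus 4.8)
The plan is to translate everything to the level of the Lie algebra $\mathfrak g$ and its dual, since a left-invariant hypo-contact structure on the associated simply connected group is the same datum as a hypo-contact structure on $\mathfrak g$. Denote by $\{e^1,\dots,e^5\}$ the coframe dual to $\{X_1,\dots,X_5\}$ and write $e^{ij}=e^i\wedge e^j$, so that the Chevalley--Eilenberg differential $d$ on $\mathfrak g^*$ is dual to the bracket. By the $SU(2)$-reduction underlying the relations \eqref{rhypo}, one may always choose the coframe adapted so that $\eta=e^5$ and
\[
\omega_1=e^{12}+e^{34},\qquad \omega_2=e^{13}+e^{42},\qquad \omega_3=e^{14}+e^{23},
\]
with $\omega_i\wedge\omega_j=2\delta_{ij}\,e^{1234}$. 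Since $\omega_3\wedge\omega_1=\omega_3\wedge\omega_2=0$, the identity $d(\eta\wedge\omega_k)=d\eta\wedge\omega_k-\eta\wedge d\omega_k$ shows that the hypo conditions $d(\eta\wedge\omega_1)=d(\eta\wedge\omega_2)=0$ collapse, under the contact normalization, to
\[
de^5=-2(e^{14}+e^{23}),\qquad e^5\wedge d\omega_1=0,\qquad e^5\wedge d\omega_2=0 .
\]
The whole problem thus becomes: determine all $de^1,\dots,de^4\in\Lambda^2\mathfrak g^*$ (with $de^5$ fixed as above) satisfying these vanishing conditions, the Jacobi identity $d^2=0$, and solvability of $\mathfrak g$, up to Lie-algebra isomorphism.

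For the ``if'' direction I would simply produce, for each $\mathfrak h_i$, an explicit adapted coframe and verify the three displayed equations. Reading off the structure constants of $\mathfrak h_i$ gives the $de^j$ at once, and one checks directly that $de^5=-2(e^{14}+e^{23})$ and that $d\omega_1,d\omega_2$ are multiples of $e^5$; the algebraic $SU(2)$-relations hold by the choice of coframe, and Jacobi is automatic since the $\mathfrak h_i$ are genuine Lie algebras. This step is purely computational.

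The core is the ``only if'' direction. Starting from the most general $de^i=\sum_{j<k}a^i_{jk}e^{jk}$ for $i=1,\dots,4$, the condition $e^5\wedge d\omega_1=0$ forces the $e^5$-free part of $d\omega_1$ to vanish, and likewise for $\omega_2$; these are linear equations in the $a^i_{jk}$ that eliminate a large block of parameters. I would then impose $d^2e^i=0$, a system of quadratic Jacobi relations in the remaining constants. To keep the case analysis finite I would exploit the residual gauge freedom: the stabilizer $SU(2)$ of $(\eta,\omega_1,\omega_2,\omega_3)$ acts on $\langle e^1,\dots,e^4\rangle$ preserving all the normalizations, and further Lie-algebra isomorphisms may be used to bring the surviving structure equations into normal form. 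Solving the polynomial system branch by branch yields a finite list of candidate structure equations.

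The last steps are to discard the non-solvable branches (for instance those producing a summand isomorphic to $\mathfrak{su}(2)$) and to sort the solvable survivors into isomorphism classes. Here I would use standard invariants --- the derived and lower central series, the nilradical, the center, and the eigenvalues of the operators $\mathrm{ad}_{X}$ for $X$ outside the nilradical --- both to show that $\mathfrak h_1,\dots,\mathfrak h_5$ are pairwise non-isomorphic and to verify that every candidate coincides with one of them; note that $\mathfrak h_1$ is nilpotent while $\mathfrak h_2,\dots,\mathfrak h_5$ are solvable non-nilpotent. I expect the main obstacle to be precisely this ``only if'' analysis: ensuring that the branching of the quadratic Jacobi system is exhaustive while the gauge and isomorphism reductions are applied consistently, so that no admissible solvable algebra is missed and none is counted twice.
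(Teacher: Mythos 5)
Your setup is sound and coincides exactly with the paper's: the adapted coframe with $\eta=e^5$, $\omega_1=e^{12}+e^{34}$, $\omega_2=e^{13}+e^{42}$, $\omega_3=e^{14}+e^{23}$, and the observation that, once $d\eta=-2\omega_3$ is imposed, the remaining hypo conditions collapse to $e^5\wedge d\omega_1=e^5\wedge d\omega_2=0$, is precisely the starting point of Proposition~\ref{red-equations}. The genuine gap is in what comes next. The paper's proof hinges on a use of solvability that your plan omits entirely: since $[\frg,\frg]\neq\frg$, the dual $\frg^*$ contains a nonzero closed element $\alpha=\beta+\lambda\eta$, and by \cite[Corollary 3]{ConS} the adapted coframe can be chosen with $e^1$ proportional to $\beta$, which forces $de^1$ to be proportional to $d\eta$, i.e.\ $de^1=A(e^{14}+e^{23})$ for a single constant $A$. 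This one normalization is what cuts the unknowns down to the thirteen coefficients of \eqref{differentials}; the Jacobi components \eqref{B15-B25-B35} and \eqref{C15-C25-D15} then determine six of them, leaving a seven-parameter quadratic system whose branching (the cases $A=0$ and $A\neq0$, Cases 1, 2.1, 2.2 of Proposition~\ref{8families}) can be carried out by hand and shown to be exhaustive. You instead propose to attack the fully general system (all structure constants of $de^1,\ldots,de^4$, minus the linear hypo conditions) using only the three-dimensional $SU(2)$ stabilizer as gauge freedom, and then assert that ``solving the polynomial system branch by branch yields a finite list.'' That assertion \emph{is} the theorem: without anchoring $e^1$ to a closed form there is no argument that the branching terminates or that no branch is missed, and you yourself identify exhaustiveness as the main obstacle without resolving it.

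Two smaller points. Solvability does enter the paper a second time in the way you anticipate --- certain branches inside Cases 2.1 and 2.2 are discarded because the resulting algebra fails to be solvable --- but this is secondary to the use described above, which must come first. Also, the paper identifies the surviving families with $\frh_1,\ldots,\frh_5$ by exhibiting explicit changes of basis; this simultaneously settles the ``if'' direction, since each family carries the standard structure \eqref{structure} by construction and can be pulled back through the isomorphism. Your plan to verify the ``if'' direction separately by finding adapted coframes on each $\frh_i$, and to distinguish the $\frh_i$ by invariants (derived series, nilradical, center), is workable, but only after the missing classification step has actually been performed.
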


Therefore, all of them are irreducible and $\mathfrak h_{1}$ is the
unique nilpotent Lie algebra having a hypo-contact structure. In
\cite{D} Diatta  gives a list of    solvable contact Lie algebras in
dimension 5 and he shows that, up to isomorphism, there are   three
nilpotent  contact Lie algebras of dimension $5$. By \cite{ConS}
only two of these nilpotent Lie algebras  have hypo structures.
%%\cite{ConS}
Since the center of the  Lie algebras $\mathfrak
h_2,\ldots,\mathfrak h_5$ is trivial, we conclude that there are
many 5-dimensional solvable contact Lie algebras with no
hypo-contact structures.

In \cite[Theorem 14]{ConS} it is proved that a hypo structure is
Sasakian if and only if it is $\eta$-Einstein \cite{BGM,Ok}. The Lie
algebras described in Theorem~\ref{clasification} cannot be Einstein
because they are solvable  and contact \cite{D}. In Section
\ref{Sasak-eta-Einstein}, we study which of these Lie algebras are
$\eta$-Einstein or, equivalently, Sasakian. We show that the only
5-dimensional solvable Lie algebras admitting a hypo-contact
$\eta$-Einstein structure, are $\mathfrak h_1$ and $\mathfrak h_3$
(Proposition \ref{families-eta-Einstein}). Concerning contact
Calabi-Yau structures recently introduced in \cite{TV}, in
Proposition \ref{clasif-contact-Calabi-Yau} it is proved that there
are no 5-dimensional solvable non-nilpotent Lie algebras admitting
such a structure.

In Section \ref{hypo-vol-eq} we solve the Conti-Salamon evolution
equations for the left-invariant hypo-contact structure on the
simply connected solvable Lie group $H_i$ $(1\leq i \leq 5)$ whose
Lie algebra is $\mathfrak h_i$. More concretely we obtain the
following result.

\begin{theorem}\label{Calabi-Yau-metrics}
Any left-invariant hypo-contact structure on the $5$-dimensional
solvable Lie group $H_i$ $(1\leq i \leq 5)$ gives rise to a metric
with holonomy $SU(3)$ on $H_i\times I$, for some open interval $I$.
\end{theorem}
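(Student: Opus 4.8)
The plan is to prove Theorem \ref{Calabi-Yau-metrics} by explicitly solving the Conti--Salamon evolution equations on each of the five Lie groups $H_i$, using the left-invariant hypo-contact structure whose existence is guaranteed by Theorem \ref{clasification}. Since a real-analytic hypo structure lifts to an integrable $SU(3)$-structure on $N\times I$ (the result of \cite{ConS} quoted in the Introduction), and left-invariant structures on a Lie group are automatically real-analytic, the evolution equations
\begin{equation*}
\partial_t\,\omega_1(t)=d\eta(t),\qquad \partial_t(\eta(t)\wedge\omega_2(t))=d\omega_3(t),\qquad \partial_t(\eta(t)\wedge\omega_3(t))=-d\omega_2(t)
\end{equation*}
(the system labelled \eqref{vol} in Section \ref{hypo-halff}) admit a solution on some interval $I$. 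The task is to produce that solution in closed form for each $\mathfrak h_i$ and then verify that the resulting metric on $H_i\times I$ has full holonomy $SU(3)$ rather than a proper subgroup.

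First I would fix, for each $\mathfrak h_i$, the explicit left-invariant coframe $\{e^1,\dots,e^5\}$ dual to $\{X_1,\dots,X_5\}$ and write down the hypo-contact quadruplet $(\eta,\omega_1,\omega_2,\omega_3)$ constructed in the classification, so that the structure equations give $d\eta,d\omega_1,d\omega_2,d\omega_3$ as concrete combinations of the $e^{ij}$. Next I would posit a $t$-dependent ansatz for $(\eta(t),\omega_i(t))$ respecting the symmetry of each algebra -- typically the forms remain diagonal in the coframe with scalar functions $f_j(t)$ as coefficients -- and substitute into the evolution equations. Because the structure constants are constant, the evolution system reduces to a coupled system of ODEs in the $f_j(t)$ subject to the algebraic hypo constraints \eqref{rhypo} (compatibility, normalization $\omega_i\wedge\omega_j=\delta_{ij}v$, and $\omega_i\wedge\eta=0$), together with the initial conditions $f_j(0)=1$. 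I would solve these ODEs case by case -- they should integrate to elementary functions such as linear, exponential, or trigonometric expressions in $t$ -- determining the maximal interval $I$ on which the solution stays nondegenerate (i.e. the $f_j(t)$ remain positive). The integrable $SU(3)$-structure on $H_i\times I$ is then assembled from the evolved forms via $F=\omega_3(t)+\eta(t)\wedge dt$ and $\Psi=(\omega_1(t)+i\,\omega_2(t))\wedge(\eta(t)+i\,dt)$, and closedness of $F,\Psi_+,\Psi_-$ is exactly the evolution equations, so the holonomy is automatically contained in $SU(3)$.

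The main obstacle will be the final step: showing the holonomy is \emph{exactly} $SU(3)$ and not a proper subgroup (which would occur if the metric were, say, a product, symmetric, or of lower cohomogeneity). For this I would compute the Riemannian curvature, or more efficiently the covariant derivatives of the defining forms and the Ricci tensor, of the explicit metric $g=\sum f_j(t)^2\,(e^j)^2+dt^2$ on $H_i\times I$, and exhibit that it is non-flat, irreducible, and non-symmetric -- for instance by checking that $\nabla R\neq 0$ or that the restricted holonomy representation acts irreducibly. A clean way to rule out reducibility is to verify that the metrics are genuinely new, i.e. not locally isometric to any previously known $SU(3)$-holonomy or product metric; comparing the scalar curvature or the de Rham decomposition suffices. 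I expect the ODE integration to be routine once the correct symmetric ansatz is chosen, and the holonomy computation to be the genuinely delicate part, handled separately for each of the five groups.
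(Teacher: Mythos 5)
Your overall skeleton (diagonal ansatz in the left-invariant coframe, reduction of \eqref{vol} to a system of ODEs in the coefficient functions, ODE existence theory on a small interval, then a curvature computation to pin down the holonomy) is essentially the paper's strategy, and invoking real-analyticity of left-invariant structures is a legitimate shortcut for the containment of the holonomy in $SU(3)$. But two of your concrete steps have genuine problems. First, a bookkeeping error: the evolution equations you wrote are not \eqref{vol}; the correct system is $\partial_t\omega_3=-d\eta$, $\partial_t(\omega_2\wedge\eta)=d\omega_1$, $\partial_t(\omega_1\wedge\eta)=-d\omega_2$, and with your permuted indices and signs the ansatz would produce the wrong ODEs. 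Also, your expectation that the ODEs "integrate to elementary functions" fails for every group except $H_1$: the paper only ever reaches implicit first-order equations such as $f'=2f^{-1}\bigl(1+r^2-r^2f^3\bigr)^{1/4}$, so the subsequent holonomy verification must be carried out using only the Taylor coefficients of $f$ at $t=0$ extracted from the ODE (the paper uses $f(0)$, $f'(0)$, $f''(0)$, $f'''(0)$); your plan to determine a maximal interval of nondegeneracy from explicit formulas is not available.

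Second, and more seriously, the final step as you describe it would not go through. Any metric with holonomy contained in $SU(3)$ is Ricci-flat, so "comparing the scalar curvature" or the Ricci tensor with known metrics is vacuous: all of these invariants vanish identically. Likewise, arguing that the metrics are "genuinely new" is not a proof of irreducibility. Your Berger-type route (irreducible plus non-symmetric plus holonomy contained in $SU(3)$ forces holonomy equal to $SU(3)$) is logically sound, but you give no workable method to verify de Rham irreducibility; note that holonomy $SU(2)$ — a Calabi--Yau $2$-fold factor times a flat factor — is a real possibility that must be excluded. The effective tool, and the one the paper uses, is the Ambrose--Singer theorem: compute the curvature $2$-forms $\Omega^i_j$ of the explicit metric and exhibit $8=\dim{\frak su}(3)$ linearly independent ones at $t=0$, which forces the holonomy algebra to be all of ${\frak su}(3)$. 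This check is genuinely delicate: for the family \eqref{Azero-ecus-4} with $a^2+b^2=2$ or $6$ the curvature forms at $t=0$ are \emph{not} linearly independent, and the paper must bring in covariant derivatives $\nabla_{\!\frac{\partial}{\partial t}}\Omega^i_j$ (which also lie in the holonomy algebra) to complete the argument — a contingency your plan does not anticipate.
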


This theorem is an existence result; in fact, our  metric is
explicit only for the left-invariant hypo-contact structure on the
nilpotent Lie group $H_1$, recovering in this way the well-known
example obtained in \cite{GLPS}.

Finally, Section \ref{half-flat} is devoted to show the existence of
new metrics with holonomy $G_2$. To this end, using Proposition
\ref{from-hypo-to-half-flat}, we consider the circle bundles over
$H_i$ $(1\leq i\leq 5)$ whose total space $K_i$ has a half-flat
structure induced by the left-invariant hypo-contact structure on
$H_i$. Solving the Hitchin evolution equations, we prove the
following theorem.

\begin{theorem}\label{G2-metrics}
The half-flat structure on $K_i$ $(i=1,4, 5)$ gives rise to a metric
with holonomy $G_2$ on $K_i \times I$,  for some open interval $I$.
\end{theorem}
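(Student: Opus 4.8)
The goal is to prove Theorem~\ref{G2-metrics}: starting from the half-flat structure on the total space $K_i$ of the circle bundle over $H_i$ (for $i=1,4,5$), solve Hitchin's evolution equations and verify that the resulting $7$-dimensional metric has full holonomy $G_2$.

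The plan is as follows. First I would write down explicitly the half-flat $SU(3)$-structure $(F,\Psi_+,\Psi_-)$ on $K_i$, obtained via Proposition~\ref{from-hypo-to-half-flat} from the left-invariant hypo-contact structure on $H_i$ classified in Theorem~\ref{clasification}. Since $K_i$ is the total space of a circle bundle over a solvable Lie group, it carries a natural left-invariant coframe $\{e^1,\dots,e^6\}$ in which the structure forms have constant coefficients and the structure equations $de^j$ are explicit; I would record these and confirm directly that $F\wedge F$ and $\Psi_+$ are closed, so that Hitchin's theory from \cite{H} applies. The Hitchin evolution equations prescribe a one-parameter family $(F(t),\Psi_+(t),\Psi_-(t))$ through
$$
  \partial_t \Psi_+ = dF, \qquad \partial_t(F\wedge F) = -2\, d\Psi_-,
$$
with the stated initial data at $t=0$; their solution defines a $G_2$-structure $\varphi = F(t)\wedge dt + \Psi_+(t)$ on $K_i\times I$ whose intrinsic torsion vanishes, hence with holonomy contained in $G_2$.

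The central computation is to solve these equations in closed form. Using the invariant coframe, I would make an ansatz in which the coefficients of $F(t)$ and $\Psi_\pm(t)$ are unknown functions of $t$ alone; substituting into the evolution equations reduces the PDE system to a finite system of ODEs. I expect the natural substitution to diagonalize or decouple this system into elementary first-order equations (typically polynomial or exponential in $t$), which I would integrate subject to the initial conditions. Throughout I must check that the evolving triple remains a genuine $SU(3)$-structure, i.e.\ that the compatibility and normalization relations are preserved and that the induced metric stays positive-definite on the interval $I$; this fixes the maximal interval on which the construction is valid.

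The main obstacle, and the essential content of the theorem, is showing that the holonomy is exactly $G_2$ rather than a proper subgroup. Having produced a torsion-free $G_2$-structure, its holonomy lies in $G_2$; to rule out reduction I would compute the Riemannian holonomy representation directly, for instance by exhibiting that the space of parallel spinors (or parallel forms) is exactly one-dimensional, equivalently that the metric is irreducible and non-flat and admits no additional parallel $1$- or $2$-forms. In practice this is verified by computing the Levi-Civita connection in the evolving coframe and checking that the curvature generates the full Lie algebra $\mathfrak{g}_2$; the failure of the analogous verification for $i=2,3$ explains why only $i=1,4,5$ appear in the statement. I would close by noting that the resulting metrics are complete (or at least giving the interval $I$ explicitly) and are new, being distinct from previously known $G_2$ examples.
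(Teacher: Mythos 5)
Your outline reproduces the paper's skeleton --- write the invariant half-flat structure on $K_i$, reduce Hitchin's equations \eqref{Hitchineveq} to an ODE system via a coefficient ansatz, conclude holonomy contained in $G_2$ --- but the two steps that carry the actual weight of the theorem do not go through as you describe. First, your expectation that the ODE systems decouple into elementary equations integrable in closed form is realized only for $i=1$: the systems \eqref{systemfamili25-hit} and \eqref{systemfamili26-hit} attached to the families \eqref{Azero-ecus-4} and \eqref{Azero-ecus-5} are not solved explicitly for $a^2+b^2\neq 0$ (resp.\ $r\neq 0$); the paper gets only local existence and uniqueness from the standard ODE existence theorem. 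Second, and this is the essential gap, your plan to certify that the holonomy is exactly $G_2$ ``by computing the Levi-Civita connection in the evolving coframe and checking that the curvature generates $\mathfrak{g}_2$'' presupposes the explicit solution you do not have, so for $i=4,5$ you are left with no workable verification. (It could in principle be salvaged by evaluating curvature and its covariant derivatives at $t=0$ only, extracting the jets of $f,k,h$ from the ODEs --- this is what the paper does for its $SU(3)$ metrics in Section 5 --- but you do not propose this, and it is substantially heavier in the $7$-dimensional setting.)

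The idea your proposal is missing is the paper's deformation argument. At the parameter value $a=b=0$ (resp.\ $r=0$, $a_2=-2$) the ODE system coincides with the one for the nilpotent case $K_1$, which does integrate explicitly, $f(t)=(1+5t)^{3/5}$, $h(t)=(1+5t)^{-1/5}$, $k(t)=(1+5t)^{-2/5}$, and recovers the metric of \cite{CFino}, known to have holonomy exactly $G_2$. Since solutions depend continuously on the parameters and ``holonomy equal to $G_2$'' is an open condition (curvature spanning all of $\mathfrak{g}_2$ at a single point suffices, by Ambrose--Singer, once the holonomy is already contained in $G_2$), the metrics for parameters sufficiently close to $0$ also have full holonomy; this is precisely the scope of what is proved, as the paper's closing remark concedes. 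Two smaller corrections: the resulting metrics are not complete --- they live on $K_i\times I$ for a small interval $I$ --- and your explanation for excluding $i=2,3$ is wrong for $i=3$: there the hypo-contact structure admits no non-trivial circle-bundle extension at all ($de^6=0$ is forced for the family \eqref{Azero-ecus-2}), and the trivial extension yields a product metric with holonomy inside $SU(3)$, so the construction cannot produce full $G_2$ there; it is a structural obstruction, not a failed verification.
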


We must notice that the above metric on $K_1 \times I$  agrees with
the one obtained in  \cite{CFino}. However, as far as we know, the
other metrics on $K_i \times I$ $(i=4,5)$ are new and, as we explain
in Section  \ref{half-flat}, they can be considered as a \lq \lq
deformation\rq \rq  of the metric with holonomy $G_2$ found in
\cite{CFino}.

\section{Hypo-contact structures}\label{hypo-halff}
In this section, we study $5$-manifolds with a \emph{hypo-contact}
structure, that is, a hypo structure in the sense of \cite{ConS}
carrying a contact form. We prove that there exists such a structure
on any oriented hypersurface of an special half-flat manifold,
namely, such that the K\"ahler form is preserved by the normal
vector field and its differential is equal two times the real part
of the $(3,0)$-form. First we need to recall some properties of hypo
structures on $5$-manifolds.

Let $N$ be a $5$-manifold with an $SU(2)$-structure
$(\eta,\omega_1,\omega_2,\omega_3)$, that is to say \cite{ConS},
$\eta$ is a nowhere vanishing $1$-form and $\omega_i$ are $2$-forms
on $N$ satisfying
  \begin{equation}\label{wedge-i-j}
  \omega_i\wedge\omega_j=\delta_{ij}v, \quad
  v\wedge\eta\not=0,
  \end{equation}
for some nowhere vanishing $4$-form $v$, and
   \begin{equation}
   i_X\omega_3=i_Y\omega_1\Rightarrow \omega_2(X,Y)\ge 0,
  \end{equation}
where $i_X$ denotes the contraction by $X$.

An $SU(2)$-structure determined by
$(\eta,\omega_1,\omega_2,\omega_3)$ is called \emph{hypo} if the
following equations
  \begin{equation}\label{rhypo}
d\omega_3=0, \qquad d(\eta\wedge\omega_1)=0,
  \qquad d(\eta\wedge\omega_2)=0
  \end{equation}
are  satisfied \cite{ConS}.

\begin{definition}\label{hypo-contact}
We say that an $SU(2)$-structure $(\eta,\omega_1,\omega_2,\omega_3)$
on a manifold $N$ is \emph{ hypo-contact} if it  satisfies
$$
d\eta=-2\omega_3, \qquad
 d(\eta\wedge\omega_1)=0, \qquad
 d(\eta\wedge\omega_2)=0.
$$
\end{definition}

Regarding the intrinsic torsion of these $SU(2)$-structures, we
recall that in Proposition $10$ of \cite{ConS}, it is proved that
the hypo structures are the $SU(2)$-structures whose intrinsic
torsion takes values in the space $2 \mathbb R  \oplus \Lambda^1
(\mathbb R^4)^* \oplus 3 \Lambda^2_- (\mathbb R^4)^*$. Now, one can
check that the hypo-contact structures are the $SU(2)$-structures
whose intrinsic torsion lies in the $SU(2)$-module
$$
2\mathbb R \oplus 2  \Lambda^2_- (\mathbb R^4)^*.
$$

An $SU(2)$-structure  on $N$ induces an $SU(3)$-structure
$(F,\Psi_+,\Psi_-)$ on $N\times\mathbb R$ defined by
  \begin{equation}
 F=\omega_3+\eta\wedge dt, \quad
  \Psi=\Psi_++i\Psi_-=(\omega_1+i\omega_2)\wedge(\eta+idt),
  \end{equation}
where $t$ is a coordinate on $\mathbb R$. Vice versa, let $f:
N\longrightarrow M$ be an oriented hypersurface of a $6$-manifold
$M$ with an $SU(3)$-structure $(F,\Psi_+,\Psi_-)$, and denote by
$\mathbb U$ the unit normal vector field. Then $N$ has an
$SU(2)$-structure $(\eta,\omega_1,\omega_2,\omega_3)$ given by
  \begin{equation}\label{hyp1}
\eta=-i_{\mathbb U}F, \quad
 \omega_3= f^*F,\quad
\omega_1=i_{\mathbb U} \Psi_-, \quad
 \omega_2=-i_{\mathbb U} \Psi_+. \quad
  \end{equation}

If $M$ has holonomy contained in $SU(3)$, that is,  if the
$SU(3)$-structure $(F,\Psi_+,\Psi_-)$ is integrable or,
equivalently,
  $$
  dF=d\Psi_+=d\Psi_-=0,
  $$
any oriented hypersurface $N$ of $M$ is naturally endowed with a
hypo structure \cite{ConS}. Indeed, the conditions
$dF=d\Psi_+=d\Psi_-=0$ imply that the induced $SU(2)$-structure on
$N$ defined by \eqref{hyp1} satisfies \eqref{rhypo}. If in addition
the Lie derivative ${\mathcal L}_{\Bbb U} F $ is equal to $2 f^*
(F)$, then the induced $SU(2)$-structure is hypo-contact.

Concerning the converse, Conti and Salamon \cite{ConS} prove that a
real analytic hypo structure  on  a real analytic $5$-manifold $N$
can be lifted to an integrable $SU(3)$-structure on $N\times I$, for
some open interval $I$.  More precisely, they show that if
$(\eta,\omega_1,\omega_2,\omega_3)$ belongs to a one-parameter
family of hypo structures
$(\eta(t),\omega_1(t),\omega_2(t),\omega_3(t))$ satisfying the
evolution equations
 \begin{equation}\label{vol}
  \left\{\begin{array}{l}
  \partial_t\omega_3=-d\eta\\
  \partial_t(\omega_2\wedge\eta)=d\omega_1\\
   \partial_t(\omega_1\wedge\eta)=-d\omega_2,
  \end{array} \right.
  \end{equation}
  for  all $t$ lying in some open interval $I$, then the $SU(3)$-structure $(F, \Psi_+, \Psi_-)$ on $N \times I$ given by
$$
F = \eta (t) \wedge dt + \omega_3 (t), \quad \Psi = \Psi_+ + i
\Psi_- = (\omega_1 (t) + i \omega_2(t)) \wedge (\eta(t) + i dt))
$$
is integrable.

In Section \ref{hypo-vol-eq} we shall back to the  equations
\eqref{vol}. Now, we weaken the integrability condition of the
$SU(3)$-structure
 $(F, \Psi_+, \Psi_-)$ on $M$ to be
half-flat in the sense of \cite{CS}, that is  $d(F\wedge
F)=d\Psi_+=0$. First we show  how  to lift a hypo structure on a
$5$-manifold $N$ to a hal-flat structure on the total space of a
circle bundle over $N$.

\begin{proposition}\label{from-hypo-to-half-flat}
Let $N$ be a $5$-manifold equipped with a hypo structure
$(\eta,\omega_1,\omega_2,\omega_3)$. For any integral closed 2-form
$\Omega$ on $N$ annihilating both $\omega_3$ and $\cos \theta\,
\omega_1+ \sin\theta\, \omega_2$ for some $\theta$, there is a
principal circle bundle $\pi\colon M\longrightarrow N$ with
connection form $\rho$ such that $\Omega$ is the curvature of $\rho$
and such that the $SU(3)$-structure $(F^\theta, \Psi^\theta_+,
\Psi^\theta_-)$ on $M$ given by
$$
\begin{array}{l}\label{hypo-to-half-flat}
F^\theta= \pi^*(\cos \theta\,\omega_1+ \sin\theta\, \omega_2) +
\pi^*(\eta)\wedge \rho, \\[5pt]
\Psi^\theta_+= \pi^*((-\sin\theta\,\omega_1+\cos
\theta\,\omega_2)\wedge\eta) -  \pi^*(\omega_3)\wedge
\rho, \\[5pt]
\Psi^\theta_-= \pi^*(-\sin\theta\,\omega_1+\cos
\theta\,\omega_2)\wedge \rho + \pi^*(\omega_3)\wedge \pi^*(\eta),
\end{array}
$$
is half-flat.
\end{proposition}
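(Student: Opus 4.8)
The plan is to build the bundle from the integral class of $\Omega$ and then verify the two half-flat conditions $d(F^\theta\wedge F^\theta)=0$ and $d\Psi^\theta_+=0$ by a direct computation, isolating exactly where the hypo equations \eqref{rhypo} and the two annihilation hypotheses enter. First I would invoke the standard correspondence between integral closed $2$-forms and principal circle bundles: since $\Omega$ is closed and integral it represents a class in $H^2(N;\Z)$, so there is a principal $U(1)$-bundle $\pi\colon M\to N$ carrying a connection $1$-form $\rho$ whose curvature is $\Omega$, i.e. $d\rho=\pi^*\Omega$. Throughout I abbreviate $\alpha=\cos\theta\,\omega_1+\sin\theta\,\omega_2$ and $\beta=-\sin\theta\,\omega_1+\cos\theta\,\omega_2$, so that the three defining forms read $F^\theta=\pi^*\alpha+\pi^*\eta\wedge\rho$, $\Psi^\theta_+=\pi^*(\beta\wedge\eta)-\pi^*\omega_3\wedge\rho$ and $\Psi^\theta_-=\pi^*\beta\wedge\rho+\pi^*(\omega_3\wedge\eta)$.

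The key algebraic input is that $\alpha,\beta$ are an orthogonal rotation of $\omega_1,\omega_2$: from \eqref{wedge-i-j} one gets $\alpha\wedge\alpha=\beta\wedge\beta=v$, $\alpha\wedge\beta=0$ and $\alpha\wedge\omega_3=\beta\wedge\omega_3=0$. Using these relations, together with $\pi^*\eta\wedge\pi^*\eta=0$ and $\rho\wedge\rho=0$, I would first check that $(F^\theta,\Psi^\theta_+,\Psi^\theta_-)$ is genuinely an $SU(3)$-structure: the compatibility $F^\theta\wedge\Psi^\theta_\pm=0$ follows termwise, since each surviving term contains one of the vanishing products $\alpha\wedge\beta$, $\alpha\wedge\omega_3$ or a repeated $1$-form; non-degeneracy of $F^\theta$ is clear because $(F^\theta)^3=3\,\pi^*(v\wedge\eta)\wedge\rho$ is nowhere zero; and the normalization $\Psi^\theta_+\wedge\Psi^\theta_-=\frac{2}{3}(F^\theta)^3$ reduces to the relations \eqref{wedge-i-j}.

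The heart of the proof is half-flatness. For the first condition, since $\pi$ is a submersion $d$ commutes with $\pi^*$, and the cross term squares away so that $F^\theta\wedge F^\theta=\pi^*(\alpha\wedge\alpha)+2\,\pi^*(\alpha\wedge\eta)\wedge\rho=\pi^*v+2\,\pi^*(\alpha\wedge\eta)\wedge\rho$. Here $dv=d(\omega_3\wedge\omega_3)=0$ because $d\omega_3=0$, while $d(\alpha\wedge\eta)=0$ because $\alpha\wedge\eta$ is a combination of $\eta\wedge\omega_1$ and $\eta\wedge\omega_2$, both closed by \eqref{rhypo}; differentiating the $\rho$-term then produces only $\pm\,\pi^*(\alpha\wedge\eta)\wedge d\rho=\pm\,\pi^*(\alpha\wedge\eta\wedge\Omega)$, which vanishes by the annihilation hypothesis $\Omega\wedge\alpha=0$. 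For the second condition $d\Psi^\theta_+=\pi^*\!\big(d(\beta\wedge\eta)\big)-d(\pi^*\omega_3\wedge\rho)$; the first summand vanishes exactly as above, and the second equals $\pi^*(\omega_3\wedge\Omega)$ after using $d\omega_3=0$, which vanishes by the hypothesis $\Omega\wedge\omega_3=0$. Hence $(F^\theta,\Psi^\theta_+,\Psi^\theta_-)$ is half-flat.

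I expect the main obstacle to be purely bookkeeping: tracking the signs produced by moving $d$ across the connection form $\rho$ in products such as $\pi^*(\alpha\wedge\eta)\wedge\rho$ and $\pi^*\omega_3\wedge\rho$, and confirming that the two annihilation conditions are precisely the ones needed to kill the resulting curvature terms $\pi^*(\cdots)\wedge\pi^*\Omega$. Everything else is dictated by \eqref{wedge-i-j} and \eqref{rhypo}; the role of the parameter $\theta$ is simply to select which closed pairing $\Omega\wedge\alpha=0$ must be imposed alongside $\Omega\wedge\omega_3=0$.
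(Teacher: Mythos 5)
Your proposal is correct and follows essentially the same route as the paper: existence of the bundle via Kobayashi's theorem, then a direct computation of $d(F^\theta\wedge F^\theta)$ and $d\Psi^\theta_+$ using $d\rho=\pi^*\Omega$, the hypo equations \eqref{rhypo}, and the two annihilation hypotheses to kill the curvature terms. Your additional verification that $(F^\theta,\Psi^\theta_+,\Psi^\theta_-)$ really is an $SU(2)$-rotated $SU(3)$-structure (compatibility and normalization) is a sound extra check that the paper leaves implicit.
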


\begin{proof}
Since $d\rho=\pi^*(\Omega)$, a simple calculation shows that
$$
d(F^\theta\wedge F^\theta)= -2\pi^*(\eta)\wedge \pi^*((\cos
\theta\,\omega_1+ \sin\theta\, \omega_2)\wedge \Omega)=0,
$$
and
$$
d(\Psi^\theta_+)= -\pi^*(\omega_3\wedge \Omega)=0.
$$
The existence of a principal circle bundle in the conditions above
follows from a well known result by Kobayashi \cite{Kobayashi}.
\end{proof}

\begin{remark}
Notice that $\Omega=0$ satisfies the hypothesis in the previous
proposition for each $\theta$ and one gets the trivial circle bundle
$M=N\times \mathbb R$ with the half-flat structure which is the
natural extension to $M$ of the hypo structure on $N$. In
Section~\ref{half-flat} we show non-trivial solutions on circle
bundles over solvable Lie groups with a hypo-contact structure.
\end{remark}

 \medskip

As a consequence of Proposition \ref{from-hypo-to-half-flat} we have

\begin{corollary}
Let $N$ be a $5$-manifold with a hypo-contact structure
$(\eta,\omega_1,\omega_2,\omega_3)$. For any $\theta$, let us
consider the half-flat structure on $N\times\mathbb R$ defined in
Proposition \ref{hypo-to-half-flat}. Then,
$$
 i_{\mathbb U}(dF^\theta-2\Psi^\theta_{+})=0,
$$
where $\mathbb U$ denotes the vector field on $\mathbb R$ dual to
$\rho=dt$.
\end{corollary}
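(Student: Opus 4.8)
The plan is to specialize Proposition~\ref{from-hypo-to-half-flat} to the case $\Omega=0$, so that the circle bundle is the trivial bundle $M=N\times\mathbb R$, the connection form is $\rho=dt$ (hence $d\rho=0$), and the forms $\omega_1,\omega_2,\omega_3,\eta$ appearing in $F^\theta$ and $\Psi^\theta_+$ are all pulled back from $N$. Under the identification $\mathbb U=\partial_t$, the contraction $i_{\mathbb U}$ enjoys two properties that I will use repeatedly: it annihilates every form pulled back from $N$ (such forms carry no $dt$-component), and $i_{\mathbb U}\,dt=1$. Combined with the graded Leibniz rule $i_X(\alpha\wedge\beta)=(i_X\alpha)\wedge\beta+(-1)^{\deg\alpha}\alpha\wedge(i_X\beta)$, this reduces each contraction to extracting precisely the summand containing the factor $dt=\rho$.

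First I would compute $dF^\theta$. Writing $F^\theta=(\cos\theta\,\omega_1+\sin\theta\,\omega_2)+\eta\wedge dt$ and using that $\omega_i$ and $\eta$ are pulled back from $N$, one gets $dF^\theta=\cos\theta\,d\omega_1+\sin\theta\,d\omega_2+d\eta\wedge dt$. Applying $i_{\mathbb U}$, the first two summands vanish since they are pullbacks from $N$, while the third gives $i_{\mathbb U}(d\eta\wedge dt)=d\eta$, because $i_{\mathbb U}\,d\eta=0$ and $i_{\mathbb U}\,dt=1$ with the sign $(-1)^2=1$. Invoking the hypo-contact condition $d\eta=-2\omega_3$ from Definition~\ref{hypo-contact}, I obtain $i_{\mathbb U}(dF^\theta)=-2\omega_3$. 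Note that the explicit values of $d\omega_1$ and $d\omega_2$ never enter, precisely because $i_{\mathbb U}$ discards those terms.

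Next I would compute $i_{\mathbb U}(\Psi^\theta_+)$. From $\Psi^\theta_+=(-\sin\theta\,\omega_1+\cos\theta\,\omega_2)\wedge\eta-\omega_3\wedge dt$, the first summand is a pullback from $N$ and is killed by $i_{\mathbb U}$, while the second yields $i_{\mathbb U}(\omega_3\wedge dt)=\omega_3$. Hence $i_{\mathbb U}(\Psi^\theta_+)=-\omega_3$. Combining the two computations gives $i_{\mathbb U}(dF^\theta-2\Psi^\theta_+)=-2\omega_3-2(-\omega_3)=0$, as claimed.

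There is no genuine obstacle here: the identity is a direct consequence of the hypo-contact normalization $d\eta=-2\omega_3$ together with the structure equations of Proposition~\ref{from-hypo-to-half-flat}. The only points requiring care are the sign bookkeeping in the Leibniz rule for the antiderivation $i_{\mathbb U}$ and the observation that $i_{\mathbb U}$ annihilates all forms pulled back from $N$, so that only the terms carrying the factor $dt=\rho$ contribute to either contraction.
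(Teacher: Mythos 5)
Your proof is correct and follows essentially the same route as the paper: both compute $dF^\theta$ using $d\eta=-2\omega_3$, observe that contraction by $\mathbb U$ kills the terms pulled back from $N$ and picks out the $dt$-components, and conclude $i_{\mathbb U}dF^\theta=-2\omega_3=2\,i_{\mathbb U}\Psi^\theta_+$. Your version merely spells out the Leibniz-rule sign bookkeeping that the paper leaves implicit.
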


\begin{proof}
Clearly $dF^\theta=(\cos \theta\,d\omega_1+ \sin\theta\, d\omega_2)
- 2\omega_3\wedge dt$, since $d\eta=-2\omega_3$. So, $ i_{\mathbb U}
dF^\theta=-2\omega_3=2 i_{\mathbb U} \Psi^\theta_+$, which proves
that $i_{\mathbb U}(dF^\theta-2\Psi^\theta_{+})=0.$
\end{proof}

\begin{theorem}\label{converse-hypoc-half-flat-0}
Let $M$ be a  $6$-dimensional manifold endowed with a half-flat
structure $(F, \Psi_+, \Psi_-)$. Let $f: N\longrightarrow M$ be an
oriented  hypersurface of $M$. Denote the unit normal vector field
by $\mathbb U$.  Suppose that
 \begin{equation}
dF=2\Psi_{+}, \quad \quad {\mathcal L}_{\mathbb U}F=0,
 \end{equation}
where ${\mathcal L}$ denotes the Lie derivative. Then, the forms
$(\eta,\omega_1,\omega_2,\omega_3)$ on $N$ given by
 \begin{equation}\label{hypo-nonc}
  \eta=-i_{\mathbb U}  F,\quad
   \omega_1= -i_{\mathbb U}  \Psi_-, \quad
  \omega_2= f^*F,\quad
  \omega_3=-i_{\mathbb U}  \Psi_+,
  \end{equation}
  define a hypo-contact structure on $N$.
  \end{theorem}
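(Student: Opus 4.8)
The plan is to regard the quadruplet \eqref{hypo-nonc} as a \emph{relabelling} of the $SU(2)$-structure \eqref{hyp1} that any oriented hypersurface of a $6$-manifold with an $SU(3)$-structure already carries, and then to verify the three equations of Definition \ref{hypo-contact} one at a time. Write $(\eta,\omega_1',\omega_2',\omega_3')$ for the forms of \eqref{hyp1}, so that $\omega_1'=i_{\mathbb U}\Psi_-$, $\omega_2'=-i_{\mathbb U}\Psi_+$ and $\omega_3'=f^*F$. Then \eqref{hypo-nonc} is exactly $(\eta,\omega_1,\omega_2,\omega_3)=(\eta,-\omega_1',\omega_3',\omega_2')$. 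The triple of $2$-forms is thus transformed by the orthogonal map sending $(\omega_1',\omega_2',\omega_3')$ to $(-\omega_1',\omega_3',\omega_2')$, which has determinant $+1$ and hence lies in $SO(3)$; since $SO(3)$ acts on an $SU(2)$-structure through its isotropy representation, the algebraic relations \eqref{wedge-i-j} and the positivity condition are preserved, and \eqref{hypo-nonc} is again a genuine $SU(2)$-structure. It therefore remains only to check the differential conditions. Throughout I let $\nu=\mathbb U^\flat$ be the $1$-form dual to $\mathbb U$, so that $f^*\nu=0$ because $\mathbb U$ is normal to $N$, and I use freely that $f^*\circ d=d\circ f^*$ together with Cartan's formula $d\,i_{\mathbb U}=\mathcal L_{\mathbb U}-i_{\mathbb U}\,d$ on $M$.

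The contact equation is immediate. Applying Cartan's formula to $\eta=-i_{\mathbb U}F$ gives $d\eta=-\mathcal L_{\mathbb U}F+i_{\mathbb U}\,dF$, and the two hypotheses $\mathcal L_{\mathbb U}F=0$ and $dF=2\Psi_+$ turn this into $d\eta=2\,i_{\mathbb U}\Psi_+=-2\omega_3$, since $\omega_3=-i_{\mathbb U}\Psi_+$.

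The step I expect to be the main obstacle is $d(\eta\wedge\omega_1)=0$. Here $\omega_1=-i_{\mathbb U}\Psi_-$ is built from $\Psi_-$, which is \emph{not} among the closed forms of a half-flat structure, so closedness cannot be read off directly; the idea is instead to recognise $\eta\wedge\omega_1$ as a pullback of $\Psi_+$. Decomposing the $SU(3)$-forms along $\mathbb U$ via the standard correspondence underlying \eqref{hyp1}, one has $\Psi_+=\omega_1'\wedge\eta-\omega_2'\wedge\nu$; pulling back to $N$ kills the $\nu$-term and, using $\omega_1'=-\omega_1$, yields the identity $f^*\Psi_+=-\eta\wedge\omega_1$. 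Half-flatness then finishes the job: $d(\eta\wedge\omega_1)=-d(f^*\Psi_+)=-f^*(d\Psi_+)=0$. This is precisely the place where the particular relabelling in \eqref{hypo-nonc}---putting $-i_{\mathbb U}\Psi_+$ into the contact slot $\omega_3$ and $f^*F$ into $\omega_2$---is indispensable, and where the half-flat hypothesis $d\Psi_+=0$ enters.

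Finally, $d(\eta\wedge\omega_2)=0$ can be obtained in either of two ways. Directly, from $i_{\mathbb U}(F\wedge F)=2\,(i_{\mathbb U}F)\wedge F=-2\,\eta\wedge F$ one gets $\eta\wedge\omega_2=-\frac{1}{2}\,f^*\bigl(i_{\mathbb U}(F\wedge F)\bigr)$, so that $d(\eta\wedge\omega_2)=-\frac{1}{2}\,f^*\bigl(\mathcal L_{\mathbb U}(F\wedge F)-i_{\mathbb U}\,d(F\wedge F)\bigr)=0$ by $\mathcal L_{\mathbb U}F=0$ and the half-flat condition $d(F\wedge F)=0$. Alternatively one expands $d(\eta\wedge\omega_2)=d\eta\wedge\omega_2-\eta\wedge d\omega_2=-2\,\omega_3\wedge\omega_2-2\,\eta\wedge f^*\Psi_+$ and invokes $\omega_2\wedge\omega_3=0$ from \eqref{wedge-i-j} together with $\eta\wedge f^*\Psi_+=-\eta\wedge\eta\wedge\omega_1=0$. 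With the three equations verified, \eqref{hypo-nonc} defines a hypo-contact structure in the sense of Definition \ref{hypo-contact}.
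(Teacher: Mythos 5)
Your proof is correct and follows essentially the same route as the paper's: the key identity $f^*\Psi_+=-\eta\wedge\omega_1$ combined with $d\Psi_+=0$ gives $d(\eta\wedge\omega_1)=0$, Cartan's formula with $\mathcal L_{\mathbb U}F=0$ and $dF=2\Psi_+$ gives $d\eta=-2\omega_3$, and your second argument for $d(\eta\wedge\omega_2)=0$ (expanding and using $\omega_2\wedge\omega_3=0$ and $\eta\wedge\eta=0$) is exactly the paper's final step. Your additional touches --- checking that the relabelling of \eqref{hyp1} is an $SO(3)$ rotation so that \eqref{hypo-nonc} is a genuine $SU(2)$-structure, and the alternative derivation of $d(\eta\wedge\omega_2)=0$ from $d(F\wedge F)=0$ --- are sound refinements of points the paper leaves implicit.
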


 \begin{proof}
Equations \eqref{hypo-nonc} imply $f^*(\Psi_+) =
-\omega_1\wedge\eta,$ so that $\omega_1\wedge\eta$ is closed if the
$SU(3)$-structure is half-flat. Using again \eqref{hypo-nonc}, we
have
$$
d\eta=-d(i_{\mathbb U} F)=i_{\mathbb U} dF-{\mathcal L}_{\mathbb
U}F= i_{\mathbb U} dF=2i_{\mathbb U}  \Psi_{+}=-2\omega_3,
$$
since ${\mathcal L}_{\mathbb U}F=0$ and $dF=2\Psi_{+}$.

To complete the proof, we notice that
$d\omega_2=f^{*}(dF)=2f^{*}(\Psi_+)=-2\omega_1\wedge\eta.$
Therefore, $d(\omega_2\wedge\eta)=d\omega_2\wedge\eta+\omega_2\wedge
d\eta=0$.
 \end{proof}

\medskip

An example of a $6$-manifold satisfying the conditions of the
Theorem \ref{converse-hypoc-half-flat-0} is the compact nilmanifold
defined by the equations
 $$
 de^i=0   \quad  (1\leq i\leq 4),  \quad \quad
 d e^5=-2e^{14}-2e^{23}, \quad \quad     d e^6=-2e^{13}+2e^{24},
 $$
with the half-flat structure $(F, \Psi_+, \Psi_-)$ given by
$$
F=e^{12} + e^{34} + e^{56},  \quad \quad \Psi_+= e^{135} - e^{146} -
e^{236} - e^{245},    \quad \quad \Psi_-= e^{136} + e^{145} +
e^{235} - e^{246}.
$$
Consider the $5$-submanifold whose unit normal vector field is the
dual to $-e^6$, that is, the $5$-dimensional compact submanifold
determined by the equations %%
$de^i=0   \  (1\leq i\leq 4)$, $d e^5=-2e^{14}-2e^{23}$.
%% $$
%% de^i=0   \quad  (1\leq i\leq 4),  \quad \quad
%% d e^5=-2e^{14}-2e^{23}.
%% $$
Then, the equations $dF=2\Psi_{+}$ and ${\mathcal L}_{\mathbb U}F=0$
are satisfied.

\begin{proposition}\label{converse-hypoc-half-flat}
Let $M$ be a  $6$-dimensional manifold endowed with a half-flat
structure $(F, \Psi_+, \Psi_-)$, and let $f: N\longrightarrow M$ be
an oriented  hypersurface
  of $M$. Denote the unit normal vector field by $\mathbb U$.  Suppose that
 \begin{equation}
   g(\nabla_{\mathbb U}{\mathbb U},X)=0, \quad \quad {\cal L}_{\mathbb U}\Psi_+=0,
 \end{equation}
   for any vector field $X$ on $N$.
 Then,
the forms  $(\eta,\omega_1,\omega_2,\omega_3)$ on $N$ given by
 \eqref{hypo-nonc}
 define a hypo structure on $N$.
  \end{proposition}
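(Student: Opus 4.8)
The plan is to verify the three defining equations \eqref{rhypo} of a hypo structure for the forms \eqref{hypo-nonc}, namely $d\omega_3=0$, $d(\eta\wedge\omega_1)=0$ and $d(\eta\wedge\omega_2)=0$. Two of them are immediate and use only half-flatness. For $d(\eta\wedge\omega_1)=0$ I would reuse the pointwise $SU(3)$ identity $f^*\Psi_+=-\omega_1\wedge\eta$ already exploited in the proof of Theorem~\ref{converse-hypoc-half-flat-0}: since half-flatness gives $d\Psi_+=0$, applying $f^*$ and using $df^*=f^*d$ yields $d(\omega_1\wedge\eta)=d(\eta\wedge\omega_1)=0$. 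For $d\omega_3=0$, with $\omega_3=-i_{\mathbb{U}}\Psi_+$ I would use Cartan's formula on $M$,
\[
d\,i_{\mathbb{U}}\Psi_+=\mathcal{L}_{\mathbb{U}}\Psi_+-i_{\mathbb{U}}d\Psi_+,
\]
both terms of which vanish by the hypothesis $\mathcal{L}_{\mathbb{U}}\Psi_+=0$ and by $d\Psi_+=0$; restricting to $N$ gives $d\omega_3=0$.

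The whole difficulty is concentrated in the third equation $d(\eta\wedge\omega_2)=0$, and this is where the two hypotheses must genuinely enter. Writing $\omega_2=f^*F$ and $\eta=f^*(-i_{\mathbb{U}}F)$ and using $i_{\mathbb{U}}(F\wedge F)=2\,(i_{\mathbb{U}}F)\wedge F$, I would first record
\[
\eta\wedge\omega_2=-\frac12\,f^*\!\left(i_{\mathbb{U}}(F\wedge F)\right),
\]
so that, by Cartan's formula and the half-flat condition $d(F\wedge F)=0$,
\[
d(\eta\wedge\omega_2)=-\frac12\,f^*\!\left(\mathcal{L}_{\mathbb{U}}(F\wedge F)\right)=-f^*\!\left(F\wedge\mathcal{L}_{\mathbb{U}}F\right).
\]
The main obstacle is precisely that $F\wedge\mathcal{L}_{\mathbb{U}}F$ need not vanish, and that any naive Cartan manipulation of $f^*(F\wedge\mathcal{L}_{\mathbb{U}}F)$ turns out to be circular (it merely reproduces the identity above). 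One must therefore produce an \emph{independent} reason for the vanishing of this pullback on $N$.

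To break the circularity I would argue as follows. Since $TN=\ker(\mathbb{U}^\flat)$, the exactness of the Koszul complex $\mathbb{U}^\flat\wedge(\cdot)$ gives, for any form $\Theta$ on $M$, the equivalence $f^*\Theta=0\Leftrightarrow \mathbb{U}^\flat\wedge\Theta=0$; hence it suffices to prove $\mathbb{U}^\flat\wedge F\wedge\mathcal{L}_{\mathbb{U}}F=0$. Here the first hypothesis is used through $\mathcal{L}_{\mathbb{U}}\mathbb{U}^\flat=i_{\mathbb{U}}d\mathbb{U}^\flat=g(\nabla_{\mathbb{U}}\mathbb{U},\cdot)=0$, the geodesic condition, which lets me rewrite
\[
\mathbb{U}^\flat\wedge F\wedge\mathcal{L}_{\mathbb{U}}F=\frac12\,\mathcal{L}_{\mathbb{U}}\!\left(\mathbb{U}^\flat\wedge F\wedge F\right).
\]
Next I would invoke the pointwise algebraic $SU(3)$ identity $\mathbb{U}^\flat\wedge F\wedge F=c\,i_{J\mathbb{U}}\,\mathrm{vol}$, for a universal nonzero constant $c$ (indeed $c=-2$ with $\mathrm{vol}=\frac16 F^3=\frac14\Psi_+\wedge\Psi_-$), which I would verify in an adapted unitary coframe; here $J$ denotes the almost complex structure of the $SU(3)$-structure and $J\mathbb{U}$ the vector metrically dual to $i_{\mathbb{U}}F$.

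Finally, because the stable form $\Psi_+$ alone determines $J$, $\Psi_-$ and $\mathrm{vol}$ through Hitchin's (diffeomorphism-natural) construction, the hypothesis $\mathcal{L}_{\mathbb{U}}\Psi_+=0$ propagates to $\mathcal{L}_{\mathbb{U}}J=0$, hence to $\mathcal{L}_{\mathbb{U}}(J\mathbb{U})=(\mathcal{L}_{\mathbb{U}}J)\mathbb{U}+J[\mathbb{U},\mathbb{U}]=0$ and to $\mathcal{L}_{\mathbb{U}}\,\mathrm{vol}=0$; therefore
\[
\mathcal{L}_{\mathbb{U}}\!\left(i_{J\mathbb{U}}\,\mathrm{vol}\right)=i_{[\mathbb{U},J\mathbb{U}]}\,\mathrm{vol}+i_{J\mathbb{U}}\,\mathcal{L}_{\mathbb{U}}\,\mathrm{vol}=0,
\]
which yields $\mathbb{U}^\flat\wedge F\wedge\mathcal{L}_{\mathbb{U}}F=0$ and so $d(\eta\wedge\omega_2)=0$, completing the verification of \eqref{rhypo}. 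I expect the delicate point to be exactly this last step: extracting the invariance of the full $SU(3)$-datum (in particular of $\mathrm{vol}$ and of $J\mathbb{U}$) from the single assumption $\mathcal{L}_{\mathbb{U}}\Psi_+=0$, together with pinning down the algebraic normalization $\mathbb{U}^\flat\wedge F\wedge F=c\,i_{J\mathbb{U}}\mathrm{vol}$. All Lie-derivative computations are understood for the natural extension of $\mathbb{U}$ to a neighbourhood of $N$ (or pointwise along $N$ via the naturality of the constructions involved).
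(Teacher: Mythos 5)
Your proof is correct, and while the two easy identities are handled exactly as in the paper ($d(\omega_1\wedge\eta)=0$ from the pointwise identity $f^*\Psi_+=-\omega_1\wedge\eta$ plus $d\Psi_+=0$, and $d\omega_3=0$ from Cartan's formula plus the two hypotheses), your treatment of the hard identity $d(\omega_2\wedge\eta)=0$ is genuinely different from the paper's. The paper sets $\rho=\mathbb{U}^\flat$, proves $d\rho=0$ along $N$ from $g(\nabla_{\mathbb U}\mathbb U,X)=0$, writes $F=\omega_2+\eta\wedge\rho$, and expands $0=d(F\wedge F)$, discarding the term $\omega_2\wedge d\omega_2$ so that only $d(\omega_2\wedge\eta)\wedge\rho$ survives. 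You instead reduce, via $\eta\wedge\omega_2=-\frac12 f^*\bigl(i_{\mathbb U}(F\wedge F)\bigr)$ and Cartan's formula, to the single vanishing $f^*(F\wedge\mathcal{L}_{\mathbb U}F)=0$, and then prove that vanishing by combining $\mathcal{L}_{\mathbb U}\rho=0$ (the geodesic hypothesis), the pointwise $U(3)$-identity $\rho\wedge F\wedge F=c\,i_{J\mathbb U}\mathrm{vol}$ (which I checked: both sides are linear in $\mathbb U$ and agree on unit vectors by $U(3)$-transitivity), and the naturality of Hitchin's construction, by which $\mathcal{L}_{\mathbb U}\Psi_+=0$ along $N$ forces $\mathcal{L}_{\mathbb U}J=\mathcal{L}_{\mathbb U}\mathrm{vol}=0$ along $N$ through the pointwise chain rule for the fibrewise $GL^+$-equivariant maps $\Psi_+\mapsto(J,\Psi_-,\mathrm{vol})$. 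In fact the two reductions are the same: along $N$ the discarded term $\omega_2\wedge d\omega_2$ has vanishing tangential part (it pulls back to $f^*(F\wedge dF)=0$ by half-flatness), but contracting it with $\mathbb U$ and restricting to $N$ yields exactly $f^*(F\wedge\mathcal{L}_{\mathbb U}F)$; so the paper's step ``$\omega_2\wedge d\omega_2=0$'', in the strength actually needed, is equivalent to the statement being proved, and the only free justification covers its tangential part alone. Your circularity remark pinpoints precisely this, and your Hitchin-naturality argument supplies the independent input that the paper's write-up leaves implicit --- it also makes visible why the hypothesis $\mathcal{L}_{\mathbb U}\Psi_+=0$ must enter this third equation at all (in the paper's proof it is invoked only for $d\omega_3=0$). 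What each approach buys: the paper's is shorter and purely exterior-algebraic, but read literally it hides the essential content in the unproved term; yours costs the stable-forms machinery and the algebraic normalization $\rho\wedge F\wedge F=c\,i_{J\mathbb U}\mathrm{vol}$, but it is complete and shows exactly where each hypothesis is used.
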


  \begin{proof}
 Proceeding as in Theorem \ref{converse-hypoc-half-flat-0}
 we see that $d(\omega_1\wedge\eta)=0$. Moreover,
 taking account \eqref{hypo-nonc},
 we have $d \omega_3=-d(i_{\mathbb U} \Psi_+)=
 i_{\mathbb U} d\Psi_{+} - {\cal L}_{\mathbb U}\Psi_+=0$
because both terms vanish. %%
Therefore, only it remains to prove that %%
$d(\omega_2\wedge\eta)=0$.
%%$$
%%d(\omega_2\wedge\eta)=0.
%%$$

Denote by $\rho$ the $1$-form on $M$ dual to the normal vector field
$\mathbb U$, and by ${\mathfrak X}(M)$ the Lie algebra of the vector
fields on $M$. Then, the restriction ${\mathfrak X}(M)_{\mid N}$ to
$N$ of ${\mathfrak X}(M)$ is the direct sum
$$
{\mathfrak X}(M)_{\mid N} = {\mathfrak X}(N) \oplus \mathbb U.
$$
Firstly, we see that, for any
%%for any
vector fields $X$, $Y$ on $N$, $d\rho(X,Y)=d\rho({\mathbb U},X)=0$.
In fact, we  have
\begin{equation}\label{equat-1}
d\rho(X,Y)=X\rho(Y) - Y\rho(X)- \rho[X,Y] =0.
\end{equation}
Also, for any vector field $X$ on $N$, we get
\begin{equation}\label{equat-2}
d\rho({\mathbb U},X)={\mathbb U}\rho(X) - X\rho({\mathbb U})-
\rho[{\mathbb U},X] =- \rho[{\mathbb U},X] =0,
\end{equation}
since the normal component of $[{\mathbb U},X]$ is
$$
g({\mathbb U},[{\mathbb U},X]) = g({\mathbb U},\nabla_{\mathbb
U}X-\nabla_{X}{\mathbb U}) = g({\mathbb U},\nabla_{\mathbb U}X) =
g(\nabla_{\mathbb U}{\mathbb U},X)=0.
$$
%%%
From equations \eqref{hypo-nonc} it follows that $F=\omega_2 + \eta
\wedge \rho$. Now from \eqref{equat-1}, \eqref{equat-2} and using
that $\omega_2\wedge d \omega_2=0$, we get
$$
0=d(F\wedge F)=2(\omega_2\wedge d \omega_2+d(\omega_2\wedge \eta)
\wedge \rho -\omega_2\wedge \eta \wedge d\rho) =2d(\omega_2\wedge
\eta) \wedge \rho,
$$
which implies that $d(\omega_2\wedge \eta)=0$.
  \end{proof}

\medskip

To finish this section, we consider $SU(3)$-structures on a manifold
with a Killing vector field $X$ preserving the $SU(3)$-structure,
and we study the conditions under which the $SU(3)$-structure
induces a hypo-contact structure $(\eta, \omega_i)$ on the
$5$-submanifold $N$ determined by $X$ as follows. Let $M$ be a
$6$-dimensonal manifold endowed with an $SU(3)$-structure
$(F,\Psi_+,\Psi_-)$, and let $X\in {\frak X}(M)$ be a Killing vector
field on $M$ which preserves the $SU(3)$-structure, that is $X$ is
an infinitesimal isometry satisfying
$$
{\mathcal L}_X F=0,\quad {\mathcal L}_X \Psi_+=0,\quad {\mathcal
L}_X \Psi_-=0.
$$

In a suitable neighborhood of any point $p$ of $M$ where
$X_p\not=0$, let us denote by $N$ the 5-dimensional manifold formed
from the orbits of $X$.

Let $x$ be the function given by
 \begin{equation}\label{function-x}
 x=g(X,X)^{1/2},
 \end{equation}
where $g$ denotes the Riemannian metric on $M$ determined by the
$SU(3)$-structure. Since $X$ is a Killing vector field, we have that
${\mathcal L}_X(x)=0$, so the function $x$ descends to a function on
$N$ which we denote again by $x$.

On the other hand, let us define a 1-form $\alpha$ on $M$ by
 \begin{equation}\label{form-alpha}
\alpha(Z)={1\over x^2}\, g(Z, X),
 \end{equation}
for any $Z\in{\frak X}(M)$. Observe that $\alpha(X)\equiv 1$. The
form $\alpha$ is also invariant by $X$; in fact, since ${\cal
L}_X\alpha=i_X d\alpha+ di_X\alpha$, it suffices to see that
$(i_Xd\alpha)(Z)=0$ for any vector field $Z\in{\frak X}(M)$. But
$$
\begin{array}{rcl}
(i_Xd\alpha)(Z)&=&\displaystyle
d\alpha(X,Z)=X(\alpha(Z))-\alpha([X,Z])={\mathcal L}_X\left({1\over
x^2}\,g(Z,X)\right)- {1\over x^2}\,g({\mathcal L}_X Z, X)
\\[5pt]
&=&\displaystyle {\mathcal L}_X \left({1\over x^2}\right) \,g(Z,X) +
{1\over x^2}\, {\mathcal L}_X(g(Z,X))- {1\over x^2}\,g({\mathcal
L}_X Z, X) = 0,
\end{array}
$$
because $X$ is Killing and $dx(X)=0$. Therefore, $\alpha$ descends
to a 1-form on $N$ which again we denote by the same letter.

\begin{lemma}\label{lemma1}
In the above conditions, the quadruplet of differential forms
$(\eta,\omega_1,\omega_2,\omega_3)$ given by
\begin{equation}\label{induced-SU(2)-st}
\eta=-i_XF,\quad \omega_1=x\, i_X(F\wedge\alpha),\quad \omega_2=i_X
\Psi_-,\quad \omega_3=-i_X \Psi_+,
\end{equation}
defines an $SU(2)$-structure on $N$, where $x$ and $\alpha$ are the
function and the $1$-form on $N$ induced by \eqref{function-x} and
\eqref{form-alpha}, respectively.
\end{lemma}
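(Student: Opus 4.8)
The plan is to verify the three defining conditions of an $SU(2)$-structure pointwise, since \eqref{wedge-i-j} together with the positivity condition are algebraic constraints on the values of the forms at each point. First I would check that the four forms genuinely descend to $N$, i.e. that they are horizontal and $X$-invariant. Horizontality is immediate from $i_X i_X=0$ together with $\alpha(X)=1$: for instance $i_X\eta=-i_X i_X F=0$, $i_X\omega_2=i_X i_X\Psi_-=0$, $i_X\omega_3=-i_X i_X\Psi_+=0$, and $i_X\omega_1=x\,i_X i_X(F\wedge\alpha)=0$. Invariance follows from $\mathcal{L}_X F=\mathcal{L}_X\Psi_\pm=0$, the already-established facts $\mathcal{L}_X x=0$ and $\mathcal{L}_X\alpha=0$, and the commutation $[\mathcal{L}_X,i_X]=i_{[X,X]}=0$. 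Hence all four forms are basic for the local submersion $M\to N$ and define forms on $N$.

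Next I would pass to an adapted orthonormal coframe. At a point $p$ with $X_p\neq0$ choose $e^1,\dots,e^6$ so that the unit normal $\mathbb{U}=X/x$ is dual to $e^6$ and $(F,\Psi_+,\Psi_-)$ are in standard form, $F=e^{12}+e^{34}+e^{56}$, $\Psi_+=e^{135}-e^{146}-e^{236}-e^{245}$, $\Psi_-=e^{136}+e^{145}+e^{235}-e^{246}$. Then $\alpha=\tfrac1x e^6$ and $i_X=x\,i_{\mathbb{U}}$. The only nonroutine form is $\omega_1$; using the Leibniz rule and $\alpha(X)=1$ I rewrite $i_X(F\wedge\alpha)=(i_X F)\wedge\alpha+F$, so that $\omega_1=x\,(F-\eta\wedge\alpha)$ is exactly $x$ times the horizontal part of $F$. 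Evaluating in the coframe gives $\eta=x\,e^5$ and
\[
\omega_1=x\,(e^{12}+e^{34}),\qquad \omega_2=x\,(e^{13}-e^{24}),\qquad \omega_3=x\,(e^{14}+e^{23}),
\]
that is, $x$ times the standard hyperk\"ahler triple on $\langle e_1,e_2,e_3,e_4\rangle$.

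With these explicit expressions the relations in \eqref{wedge-i-j} are a short computation: the mixed products $\omega_i\wedge\omega_j$ with $i\neq j$ vanish because every term shares a repeated index, while $\omega_1\wedge\omega_1=\omega_2\wedge\omega_2=\omega_3\wedge\omega_3=2x^2\,e^{1234}=:v$, which is nowhere zero since $x=g(X,X)^{1/2}>0$ on the locus $X\neq0$; moreover $v\wedge\eta=2x^3\,e^{12345}\neq0$. For the positivity condition I would solve $i_Z\omega_3=i_W\omega_1$ for a horizontal $W=\sum w_i e_i$ in terms of $Z=\sum z_i e_i$, obtaining $(w_1,w_2,w_3,w_4)=(-z_3,z_4,z_1,-z_2)$, and then find $\omega_2(Z,W)=x\,(z_1^2+z_2^2+z_3^2+z_4^2)=x\,|Z|^2\ge0$, so the condition holds.

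The point requiring the most care is the interaction of the conformal factor $x$ with the $SU(2)$ conditions. Because $X$ is Killing but not unit, the forms $\eta,\omega_2,\omega_3$ carry a factor $x=|X|$ relative to the usual unit-normal construction \eqref{hyp1}, and the definition $\omega_1=x\,i_X(F\wedge\alpha)$ is tailored precisely so that all three $\omega_i$ scale by the \emph{same} $x$ (hence keep $\omega_i\wedge\omega_j=\delta_{ij}v$) rather than by mismatched powers; the rewriting $\omega_1=x\,(F-\eta\wedge\alpha)$ is what makes this transparent. I would also check that the adapted coframe can be chosen positively oriented and compatible with the $SU(3)$-orientation, so that the sign in the positivity computation is the correct one; since $x>0$ this sign is inherited directly from the standard quaternionic triple, completing the verification.
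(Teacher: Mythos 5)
Your proof is correct and follows essentially the same route as the paper's: both establish that the four forms are basic (the paper checks $\mathcal{L}_X$-invariance via Cartan's formula, with horizontality implicit from $i_Xi_X=0$) and then pass to an adapted orthonormal coframe with $e^6$ dual to $X/x$, in which $\eta,\omega_1,\omega_2,\omega_3$ become exactly $x$ times the standard $SU(2)$-model $\bigl(e^5,\,e^{12}+e^{34},\,e^{13}+e^{42},\,e^{14}+e^{23}\bigr)$. Your additional steps---the Leibniz identity $i_X(F\wedge\alpha)=F-\eta\wedge\alpha$ and the explicit verification of $\omega_i\wedge\omega_j=\delta_{ij}v$, $v\wedge\eta\not=0$ and of the positivity condition---simply spell out what the paper leaves implicit after displaying the normal forms.
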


\begin{proof}
First we show that the Lie derivative of the forms $i_XF, x\,
i_X(F\wedge\alpha), i_X \Psi_-$ and $i_X \Psi_+$ with respect to $X$
is zero, so these forms descend to forms on $N$. In fact, since $X$
preserves the $SU(3)$-structure we have
$$\begin{array}{l}
{\mathcal L}_X(i_XF)=i_X(di_XF) =i_X({\mathcal L}_XF)=0,\\
{\mathcal L}_X(x\, i_X(F\wedge\alpha))=({\mathcal L}_X x)\,
i_X(F\wedge\alpha) + x( {\mathcal L}_X i_X(F\wedge\alpha))=0,\\
{\mathcal L}_X(i_X\Psi_{\pm})=i_X(di_X\Psi_{\pm})=i_X({\mathcal L}_X
\Psi_{\pm})=0.
\end{array}
$$
Now it remains to see that $(\eta,\omega_i)$ defines an
$SU(2)$-structure. Let $E_6={1\over x}X$ be the unitary vector field
in the direction of $X$. We can consider a local orthonormal basis
$E_1,\ldots,E_6$ such that the $SU(3)$-structure expresses in terms
of the dual basis $e^1,\ldots,e^6$ as follows
$$
F=e^{12}+e^{34}+e^{56},\quad
\Psi_+=(e^{13}+e^{42})e^5-(e^{14}+e^{23})e^6,\quad
\Psi_-=(e^{14}+e^{23})e^5+(e^{13}+e^{42})e^6.
$$
Notice that $\alpha={1\over x} e^6$. Therefore, locally we have
$$\begin{array}{l}
\eta=-i_XF=-i_{xE_6}(e^{12}+e^{34}+e^{56})=xe^5,\\
\omega_1=x\, i_X(F\wedge\alpha)=
i_{xE_6}(e^{126}+e^{346})=x(e^{12}+e^{34}),\\
\omega_2=i_X(\Psi_-)=
i_{xE_6}((e^{14}+e^{23})e^5+(e^{13}+e^{42})e^6)=x(e^{13}+e^{42}) ,\\
\omega_3=-i_X(\Psi_+)=
-i_{xE_6}((e^{13}+e^{42})e^5-(e^{14}+e^{23})e^6)=x(e^{14}+e^{23}) ,
\end{array}
$$

\smallskip

\noindent and thus $(\eta,\omega_i)$ is an $SU(2)$-structure.
\end{proof}

\bigskip

\begin{theorem}
Let $M$ be a $6$-manifold in the conditions of Lemma \ref{lemma1}.
Suppose that $X$ is a Killing vector field of constant lenght,
preserving a half-flat $SU(3)$-structure on $M$ and satisfying
 $d\alpha\wedge i_X\Psi_+=0$. Then the
 structure on $N$ given by
\eqref{induced-SU(2)-st} is hypo. If in addition
 $i_X(dF-2\Psi_+)=0$, the $SU(2)$-structure
 on $N$ is hypo-contact.
 \end{theorem}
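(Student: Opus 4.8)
The plan is to verify the three hypo equations \eqref{rhypo} directly from the explicit local expressions obtained in Lemma~\ref{lemma1}, exploiting the fact that $X$ preserves a half-flat structure. Recall from the proof of Lemma~\ref{lemma1} that in the adapted orthonormal coframe we have $\eta=xe^5$, $\omega_1=x(e^{12}+e^{34})$, $\omega_2=x(e^{13}+e^{42})$, and $\omega_3=x(e^{14}+e^{23})$, while $\alpha=\tfrac1x e^6$ and $F=\omega_2+\eta\wedge\alpha$ when restricted appropriately. First I would translate the defining forms \eqref{induced-SU(2)-st} into statements relating the contracted forms $i_X F$, $i_X\Psi_\pm$ to pullbacks on $N$, and record the identities $f^*\Psi_+ = -\omega_1\wedge\eta$ (up to the usual sign bookkeeping coming from $\eta=-i_XF$ and the decomposition of $\Psi_+$).

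Next I would establish $d\omega_3=0$. Since $\omega_3=-i_X\Psi_+$ and $X$ preserves the structure, the Cartan formula gives $d\omega_3=-d(i_X\Psi_+)=-{\mathcal L}_X\Psi_+ + i_X d\Psi_+$. The half-flat condition $d\Psi_+=0$ kills the second term and ${\mathcal L}_X\Psi_+=0$ kills the first, so $d\omega_3=0$ follows immediately. This is the easy equation, paralleling the argument in Proposition~\ref{converse-hypoc-half-flat}.

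The equation $d(\eta\wedge\omega_1)=0$ should come from half-flatness of $\Psi_+$ together with the identification $\eta\wedge\omega_1 = \pm f^*\Psi_+$ (or a closely related pullback). Since $d\Psi_+=0$ and pullback commutes with $d$, the pullback of $\Psi_+$ to $N$ is closed, giving $d(\eta\wedge\omega_1)=0$; here I would need to check that the transverse part of $\Psi_+$ is exactly $\eta\wedge\omega_1$ after accounting for the factor of $x$ in $\omega_1=x\,i_X(F\wedge\alpha)$. The remaining equation $d(\eta\wedge\omega_2)=0$ is where the extra hypothesis $d\alpha\wedge i_X\Psi_+=0$ must enter. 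The natural route is to use $F=\omega_2+\eta\wedge\alpha$ (as in Proposition~\ref{converse-hypoc-half-flat}) and expand $d(F\wedge F)=0$; isolating the term proportional to $\rho$ (here $\alpha$) one obtains $d(\omega_2\wedge\eta)\wedge\alpha$ plus a correction term involving $\omega_2\wedge\eta\wedge d\alpha$, and the hypothesis $d\alpha\wedge i_X\Psi_+=0$ is precisely what is needed to annihilate that correction, since $\omega_2$ and $i_X\Psi_+=-\omega_3$ are related within the $SU(2)$-module.

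Finally, for the hypo-contact upgrade I would compute $d\eta$ under the additional hypothesis $i_X(dF-2\Psi_+)=0$. Using $\eta=-i_XF$ and Cartan's formula, $d\eta=-d(i_XF)=-{\mathcal L}_XF+i_X dF=i_X dF$ since ${\mathcal L}_XF=0$ is part of the Killing-preservation assumption. The hypothesis then gives $i_X dF=2i_X\Psi_+=-2\omega_3$, which is exactly $d\eta=-2\omega_3$, the contact condition. The main obstacle I anticipate is the second-to-last step: correctly tracking the factor $x$ (which is constant here, since $X$ has constant length, so $dx=0$ — this is what makes the computation tractable) and verifying that $d\alpha\wedge i_X\Psi_+=0$ really cancels the unwanted term in the expansion of $d(F\wedge F)=0$, rather than merely being a convenient-looking hypothesis. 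The constancy of $x$ should simplify $d\alpha$ considerably and is likely essential to closing this gap.
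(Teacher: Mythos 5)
Your first and last steps are exactly the paper's: $d\omega_3=-d(i_X\Psi_+)=-{\mathcal L}_X\Psi_+ + i_X d\Psi_+=0$, and, for the contact condition, $d\eta=-d(i_XF)={\,i_X dF}=2i_X\Psi_+=-2\omega_3$. The gap lies in the two middle equations, which are precisely where half-flatness and the hypothesis $d\alpha\wedge i_X\Psi_+=0$ must be deployed. You have imported the dictionary of the hypersurface case \eqref{hypo-nonc}, where $\omega_2=f^*F$ and $f^*\Psi_+=-\omega_1\wedge\eta$. But in this theorem $N$ is the space of orbits of $X$, not a hypersurface: there is no map $f$ and no pullback, and the dictionary of \eqref{induced-SU(2)-st} is different, namely $\omega_1=x\,i_X(F\wedge\alpha)$ is the form built from $F$, while $\omega_2=i_X\Psi_-$ and $\omega_3=-i_X\Psi_+$. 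In the adapted coframe of Lemma~\ref{lemma1} one has $F=\frac{1}{x}\,\omega_1+\eta\wedge\alpha$, \emph{not} $\omega_2+\eta\wedge\alpha$, and the horizontal part of $\Psi_+$ is $\frac{1}{x^2}\,\omega_2\wedge\eta$, \emph{not} $\pm\,\omega_1\wedge\eta$. So both of your middle steps are attached to the wrong forms, and neither identity you propose to differentiate is true.

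The correct pairing, which is the paper's proof, is the opposite of yours. One has $\omega_1\wedge\eta=-\frac{x}{2}\, i_X(F\wedge F)$, so (using $dx=0$ from constant length, ${\mathcal L}_X(F\wedge F)=0$ and $d(F\wedge F)=0$) the equation $d(\omega_1\wedge\eta)=0$ follows from half-flatness alone, with no need of the extra hypothesis. And $\omega_2\wedge\eta=x^2\, i_X(\alpha\wedge\Psi_+)=x^2\bigl(\Psi_+-\alpha\wedge i_X\Psi_+\bigr)$, whose differential (using $dx=0$, $d\Psi_+=0$ and $d(i_X\Psi_+)={\mathcal L}_X\Psi_+-i_Xd\Psi_+=0$) equals $-x^2\, d\alpha\wedge i_X\Psi_+$; this is exactly, and only, where the hypothesis enters. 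Moreover your anticipated mechanism cannot be repaired as stated: even granting your identity $F=\omega_2+\eta\wedge\alpha$, the leftover term in the expansion of $d(F\wedge F)=0$ would be $\omega_2\wedge\eta\wedge d\alpha$, and the hypothesis $d\alpha\wedge\omega_3=0$ does not annihilate it. Indeed, writing $d\alpha=\beta+\eta\wedge\gamma$ with $\beta,\gamma$ horizontal, the hypothesis forces $\gamma=0$ and kills only the $\omega_3$-component of the self-dual part of $\beta$, whereas $\omega_2\wedge\eta\wedge d\alpha=0$ requires the $\omega_2$-component of $\beta$ to vanish; these are independent conditions in $\Lambda^2_+$. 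So the plan as written fails at both middle steps and must be replaced by the two contraction identities above.
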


\begin{proof}
First we notice that for any $SU(3)$-structure on $M$, the
SU(2)-structure on $N$ defined by \eqref{induced-SU(2)-st} satisfies
$$
\omega_1\wedge\eta = -{x\over 2} \, i_X(F\wedge F),\qquad
\omega_2\wedge\eta = x^2 \, i_X(\alpha\wedge \Psi_+) = x^2(\Psi_+ -
\alpha\wedge i_X\Psi_+).
$$

\smallskip
\noindent Therefore, we get
$$
\begin{array}{l}
-2 d(\omega_1\wedge\eta)= dx\wedge i_X(F\wedge F) - x\,
i_Xd(F\wedge F),\\
d(\omega_2\wedge\eta)= 2x dx\wedge(\Psi_+ - \alpha\wedge i_X\Psi_+)
+ x^2 \bigl(  d\Psi_+ - d\alpha\wedge i_X\Psi_+ -\alpha\wedge
i_X(d\Psi_+) \bigr),\\
d\omega_3=i_X(d\Psi_+),\quad d\eta+2\omega_3=i_X(dF-2\Psi_+).
\end{array}
$$
Now, let us consider a Killing vector field $X$ of constant lenght
such that it preserves a half-flat $SU(3)$-structure
$(F,\Psi_+,\Psi_-)$ on $M$ and satisfies $d\alpha\wedge
i_X\Psi_+=0$, then the structure on $N$ given by
\eqref{induced-SU(2)-st} is hypo since
$$
d\omega_3 =0,\quad\quad d(\omega_1\wedge\eta)=0,\quad\quad
d(\omega_2\wedge\eta)= - x^2 \, d\alpha\wedge i_X\Psi_+=0.
$$
Moreover, if  $i_X(dF-2\Psi_+)=0$, then $d\eta=-2\omega_3$, and so
the $SU(2)$-structure on $N$ is hypo-contact.
\end{proof}

\medskip

The previous study is done in the same vein of the papers \cite{AS}
and \cite{ConTom} where  $S^1$-bundles with a  $U(1)$-invariant
$SU(3)$-structure (or $G_2$-structure) are considered.

\begin{remark}
We must notice that in the conditions of Lemma~\ref{lemma1}, if $X$
is a Killing vector field on $M$ preserving the $SU(3)$-structure
(not necessarily half-flat) and satisfying $i_X(dF-2\Psi_+)=0$, then
the 1-form $\eta$ is a contact form on $N$.
\end{remark}

\section{Solvable Lie algebras with a hypo-contact structure}\label{hypo-contact-solvable}

The purpose of this Section is to prove Theorem~\ref{clasification}.
First, we need to show the following propositions.

\begin{proposition}\label{red-equations}
Let $\mathfrak g$ be a solvable Lie algebra of dimension $5$ with a
hypo-contact structure $(\eta, \omega_1,\omega_2, $ $\omega_3)$.
Then, there is a basis $e^1,\ldots,e^5$ for ${\frak g}^*$ such that
\begin{equation}\label{structure}
\eta=e^5,\quad\quad \omega_1= e^{12} + e^{34},\quad\quad \omega_2=
e^{13} + e^{42},\quad\quad \omega_3= e^{14} + e^{23},
\end{equation}
and
\begin{equation}\label{differentials}
\left\{
\begin{array}{rcl}
d e^1 \!\!&\!\!=\!\!&\!\! A e^{14} + A e^{23},\\[7pt]
d e^2 \!\!&\!\!=\!\!&\!\! B_{12} e^{12} + B_{13} e^{13} + B_{14}
e^{14} +
B_{15} e^{15} - B_{14} e^{23} + (2 A + B_{13}) e^{24}\\[5pt]
&&+ B_{25} e^{25} + B_{34}
e^{34} + B_{35} e^{35},\\[7pt]
d e^3 \!\!&\!\!=\!\!&\!\! (3 A + B_{13}) e^{12} + C_{13} e^{13} +
C_{14} e^{14} +
C_{15} e^{15} - C_{14} e^{23} - (B_{12} + B_{34}-C_{13}) e^{24}\\[5pt]
&&+ C_{25} e^{25} - (A + B_{13})
e^{34} - B_{25} e^{35},\\[7pt]
d e^4 \!\!&\!\!=\!\!&\!\! B_{14} e^{12} + C_{14} e^{13} + (B_{34} -
C_{13}) e^{14} +
D_{15} e^{15} + (B_{12} + C_{13}) e^{23} + C_{14} e^{24}\\[5pt]
&&+ C_{15} e^{25} - B_{14}
e^{34} - B_{15} e^{35},\\[7pt]
d e^5 \!\!&\!\!=\!\!&\!\!  -2 e^{14} - 2 e^{23},
\end{array}
\right.
\end{equation}
where the coefficients $A, B_{12}, B_{13}, B_{14}, B_{15}, B_{25},
B_{34}, B_{35}, C_{13}, C_{14}, C_{15}, C_{25}$ and $D_{15}$ satisfy
the conditions
\begin{equation}\label{jacobi}
d(de^i)=0
\end{equation}
for $i=1,2,3,4,5$.
\end{proposition}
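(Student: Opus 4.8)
The plan is to build the adapted basis in stages, turning the hypo-contact conditions into linear constraints on the structure constants and then removing the remaining freedom. First I would use that $(\eta,\omega_1,\omega_2,\omega_3)$ is an $SU(2)$-structure: by the algebraic model underlying \eqref{wedge-i-j} there is a basis $e^1,\dots ,e^5$ of $\mathfrak g^*$ in which $\eta=e^5$ and the $\omega_i$ take the standard shape \eqref{structure}, the only remaining freedom being the stabiliser of this standard structure, which is a copy of $SU(2)$. In any such basis the contact condition $d\eta=-2\omega_3$ of Definition \ref{hypo-contact} is read off at once as $de^5=-2(e^{14}+e^{23})$, giving the last line of \eqref{differentials}.

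Next I would write $de^1,\dots ,de^4$ with undetermined coefficients and impose the two surviving hypo-contact equations. The key simplification is that $d\eta\wedge\omega_j=-2\,\omega_3\wedge\omega_j=0$ by \eqref{wedge-i-j}, so that $d(\eta\wedge\omega_j)=-\eta\wedge d\omega_j$ and the conditions collapse to $\eta\wedge d\omega_1=\eta\wedge d\omega_2=0$; since $\eta=e^5$, this is exactly the vanishing of the parts of $d\omega_1$ and $d\omega_2$ lying in $\Lambda^3\langle e^1,e^2,e^3,e^4\rangle$. Adjoining the relation $d(de^5)=0$, i.e. $d\omega_3=0$, which is linear in the unknown differentials once $de^5$ is fixed, produces an explicit homogeneous linear system in the coefficients of $de^1,\dots ,de^4$; I would solve this system directly, isolating the coefficients that are forced to coincide or to vanish.

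The general solution of that linear system still carries more parameters than appear in \eqref{differentials}, so the final step is to spend the residual $SU(2)$ gauge and the hypothesis that $\mathfrak g$ is solvable — the latter to rule out the branches not corresponding to solvable algebras and to normalise the adjoint action $\mathrm{ad}(e_5)$ along the Reeb direction — in order to bring the solution to the stated form \eqref{differentials} with free coefficients $A,B_{12},\dots ,D_{15}$. The remaining integrability identities $d(de^i)=0$ for $i=1,\dots ,4$, which are quadratic in these coefficients, then survive precisely as the conditions \eqref{jacobi}. I expect the main obstacle to lie in this last normalisation: matching the several-parameter solution of the linear system to the thirteen-parameter form \eqref{differentials} is a careful and somewhat lengthy computation, resting on the interplay of the residual gauge with solvability rather than on any single conceptual point.
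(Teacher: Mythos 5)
Your middle step is sound and coincides with the paper's computation: once a basis realising \eqref{structure} is fixed, the contact equation gives $de^5=-2e^{14}-2e^{23}$, and the conditions $d(\eta\wedge\omega_1)=d(\eta\wedge\omega_2)=0$ together with $d(de^5)=0$ become linear constraints on the remaining structure constants (your observation that $d\eta\wedge\omega_j=-2\,\omega_3\wedge\omega_j=0$ for $j=1,2$, so that only $\eta\wedge d\omega_j$ survives, is exactly what the paper exploits). The gap lies in how you propose to arrive at the specific form \eqref{differentials}, whose first line asserts that $de^1$ is a \emph{multiple of} $\omega_3=e^{14}+e^{23}$. This is not a consequence of the linear system, and solvability does not enter in the way you describe (``ruling out branches'' or ``normalising $\mathrm{ad}(e_5)$''). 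The paper uses solvability once, at the very start, and for a precise purpose: since $[\mathfrak g,\mathfrak g]\neq\mathfrak g$, there is a nonzero closed $\alpha\in\mathfrak g^*$; writing $\alpha=\beta+\lambda\,\eta$ with $\beta\perp\eta$, closedness gives $d\beta=-\lambda\, d\eta$, and $\beta\neq0$ because $d\eta=-2\omega_3\neq0$ (a closed form cannot be a nonzero multiple of a contact form). Hence $\gamma=\beta/\Vert\beta\Vert$ is a unit element of $\langle\eta\rangle^{\perp}$ with $d\gamma=\tau\,d\eta\propto\omega_3$, and \emph{only then} is the residual $SU(2)$ gauge spent --- via Corollary 3 of \cite{ConS}, i.e.\ transitivity of $SU(2)$ on the unit sphere of $\langle\eta\rangle^{\perp}$ --- to produce an adapted basis \eqref{structure} with $e^1=\gamma$. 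That choice is what forces $de^1=A\,e^{14}+A\,e^{23}$.

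In your ordering (adapted basis first, solvability deferred to a final ``normalisation'') this mechanism is absent: after solving your linear system, $de^1$ still carries ten free coefficients, and nothing in your plan singles out the direction in $\langle\eta\rangle^{\perp}$ onto which $e^1$ must be rotated. Note, moreover, that the existence of a basis satisfying \eqref{structure} and \eqref{differentials} is \emph{equivalent} to the existence of a nonzero closed element of $\mathfrak g^*$ (if such a basis exists, then $d(e^1+\tfrac{A}{2}e^5)=0$; conversely, the paper's argument), so no route that never produces a closed $1$-form can succeed; the relevant hypothesis is really $[\mathfrak g,\mathfrak g]\neq\mathfrak g$, as the paper points out in Remark~\ref{clasif-more-gen}. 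To repair your proof, insert the closed-form argument before (or after) solving the linear system and use the $SU(2)$ transitivity to set $e^1=\gamma$; the rest of your outline then goes through as in the paper.
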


\begin{proof}
Let $V$ be the subspace of $\mathfrak g^*$ orthogonal to $\eta$.
Since $\mathfrak g$ is solvable, there is a nonzero element
$\alpha\in \mathfrak g^*$ which is closed. Thus,
$$
\alpha = \beta + \lambda\, \eta,
$$
where $\beta\in V$ and $\lambda\in \mathbb{R}$. Now, $d\alpha=0$ is
equivalent to $d\beta = - \lambda\, d\eta$. Therefore,
$\gamma={1\over \Vert\beta\Vert}\, \beta$ is a unit element in
$V=\langle \eta \rangle^\perp$ satisfying
$$
d\gamma = \tau\, d\eta,
$$
with $\tau =-\lambda/\Vert\beta\Vert$. From \cite[Corollary
3]{ConS}, there is a basis $e^1,\ldots,e^5$ for ${\mathfrak g}^*$
satisfying \eqref{structure} with $e^1=\gamma$.

Therefore, $de^5=d\eta=-2\omega_3=-2 e^{14} - 2 e^{23}$ and
$de^1=d\gamma= \tau\, d\eta= A\, e^{14} +A\, e^{23}$, where
$A=-2\tau$, so the differentials of $e^1,\ldots,e^5$ are given by
\begin{equation}\label{gen-differentials}
\left\{
\begin{array}{rl}
d e^1 =\!\!&\!\! A\, e^{14} + A\, e^{23},\\[5pt]
d e^2 =\!\!&\!\! B_{12} e^{12} + B_{13} e^{13} + B_{14} e^{14} +
\cdots\cdots + B_{34} e^{34}+ B_{35} e^{35}+
B_{45} e^{45},\\[5pt]
d e^3 =\!\!&\!\! C_{12} e^{12} + C_{13} e^{13} + C_{14} e^{14} +
\cdots\cdots + C_{34} e^{34}+ C_{35} e^{35}+
C_{45} e^{45},\\[5pt]
d e^4 =\!\!&\!\! D_{12} e^{12} + D_{13} e^{13} + D_{14} e^{14} +
\cdots\cdots +
D_{34} e^{34}+ D_{35} e^{35}+ D_{45} e^{45},\\[5pt]
d e^5 =\!\!&\!\! -2 e^{14} - 2 e^{23},
\end{array}
\right.
\end{equation}
where the coefficients must satisfy the Jacobi identity $d(de^i)=0$,
$1\leq i\leq 5$, and the additional conditions
$d(\eta\wedge\omega_1)=d(\eta\wedge\omega_2)=0$ in order to have a
hypo-contact structure. By imposing  that  $d(e^{125}+e^{345}) =
d(\eta\wedge\omega_1)=0$, $d(e^{135}-e^{245}) =
d(\eta\wedge\omega_2)=0$ and $d(e^{14}+e^{23})=-(1/2) d(d e^5)=0$,
%%thus
the coefficients in \eqref{gen-differentials} satisfy the following
relations:
%$$
%\begin{array}{lclcl}
%B_{23}= -B_{14} & \phantom{xxx} & C_{12}= 3 A + B_{13}  & \phantom{xxx} & D_{12}=  B_{14} \\[5pt]
%B_{24}= 2A + B_{13} & \phantom{xxx} & C_{23}= - C_{14}& \phantom{xxx} & D_{13}=  C_{14}\\[5pt]
%B_{45}= 0 & \phantom{xxx} & C_{24} = -B_{12}-B_{34} +C_{13} &
%\phantom{xxx} & D_{14}= B_{34}- C_{13}\\[5pt]
% & \phantom{xxx} & C_{34}= - A - B_{13}  & \phantom{xxx} & D_{23}= B_{12}+ C_{13} \\[5pt]
%  & \phantom{xxx} & C_{35}= - B_{25}& \phantom{xxx} & D_{24}=  C_{14}\\[5pt]
%  & \phantom{xxx} & C_{45} = 0 &
%\phantom{xxx} & D_{25}= C_{15}\\[5pt]
% & \phantom{xxx} &   & \phantom{xxx} & D_{34}= - B_{14} \\[5pt]
%  & \phantom{xxx} &   & \phantom{xxx} & D_{35}= - B_{15}\\[5pt]
%  & \phantom{xxx} &   &
%\phantom{xxx} & D_{45}= 0
%\end{array}
%$$
%
$$
\begin{array}{rl}
B_{23} \!\!\!&\!\!\! = -B_{14}, \   B_{24}= 2A + B_{13},\  B_{45}=
0,\
C_{12}= 3 A + B_{13},\    C_{23}= - C_{14},\  C_{24} = -B_{12}-B_{34} +C_{13}, \\[5pt]
 C_{34} \!\!\!&\!\!\! = - A - B_{13},\ C_{35}= -
B_{25},\  C_{45} = 0,\ D_{12} =  B_{14},\  D_{13}= C_{14},\ D_{14}=
B_{34}-
C_{13}, \\[5pt]
D_{23} \!\!\!&\!\!\!= B_{12}+ C_{13},\  D_{24}=  C_{14},\ D_{25}=
C_{15},\ D_{34} = - B_{14},\  D_{35}= - B_{15},\  D_{45}= 0.
\end{array}
$$
This completes the proof of \eqref{differentials}. Notice that the
coefficients must also satisfy \eqref{jacobi}.
\end{proof}

Let $E_1,\ldots, E_5$ be the basis for $\mathfrak g$ dual to the
basis $e^1,\ldots,e^5$ and let us denote by $c_{ijk}^l$ the
component in $E_l$ of $\bigl[[E_i,E_j],E_k\bigr]+
\bigl[[E_j,E_k],E_i\bigr]+ \bigl[[E_k,E_i],E_j\bigr]$. It is clear
that the Jacobi identity is satisfied if and only if $c_{ijk}^l=0$
for $1\leq i<j<k\leq 5$ and $1\leq l\leq 5$.

A direct calculation shows that $c_{134}^4=c_{134}^3=c_{134}^2=0$ if
and only if
\begin{equation}\label{B15-B25-B35}
\begin{array}{rl}
2 B_{15} =&  B_{12} B_{14} + B_{14} B_{34} + 2 B_{14} C_{13} - 2 B_{13} C_{14},\\[6pt]
2 B_{25}=& B_{12} B_{13} + 4 A B_{34} + 3 B_{13} B_{34}
- 2 B_{13} C_{13} - 2 B_{14} C_{14},\\[6pt]
2 B_{35} =& 2 A B_{13} + 2 B_{13}^2 + 2 B_{14}^2 - B_{12} B_{34} +
B_{34}^2,
\end{array}
\end{equation}
respectively. Moreover, $c_{123}^3=c_{124}^3=c_{123}^4=0$ if and
only if
\begin{equation}\label{C15-C25-D15}
\begin{array}{l}
2 C_{15}\! =\!- 4 A B_{14} - 2 B_{13} B_{14} + 3 B_{12} C_{14} + B_{34} C_{14},\\[6pt]
2 C_{25}\! =\!-12 A^2 \!-\! B_{12}^2\!-\! 10 A B_{13} \!-\! 2
B_{13}^2 - 2 B_{12}
B_{34} - B_{34}^2 + 3 B_{12} C_{13} + 3 B_{34} C_{13} - 2 C_{13}^2 - 2 C_{14}^2,\\[6pt]
2 D_{15}\! =\!- B_{12}^2 - 2 B_{14}^2 + B_{12} B_{34} - 3 B_{12}
C_{13} + B_{34} C_{13} - 2 C_{13}^2 - 2 C_{14}^2,
\end{array}
\end{equation}
respectively.

\begin{corollary}\label{red-equations-2}
Let $\mathfrak g$ be a solvable Lie algebra with a hypo-contact
structure $(\eta, \omega_1,\omega_2,\omega_3)$. Then, there is a
basis $e^1,\ldots,e^5$ of ${\mathfrak g}^*$ satisfying
\eqref{structure}, \eqref{differentials}, \eqref{B15-B25-B35},
\eqref{C15-C25-D15} and where the seven remaining coefficients $A,
B_{12}, B_{13}, B_{14}, B_{34}, C_{13}, C_{14}$ satisfy the Jacobi
identity \eqref{jacobi}.
\end{corollary}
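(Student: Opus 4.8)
The plan is to start from Proposition~\ref{red-equations}, which already gives a basis $e^1,\dots,e^5$ in which $\eta=e^5$, the $\omega_i$ have the normal form \eqref{structure}, and the differentials \eqref{differentials} are parametrized by thirteen coefficients $A,B_{12},B_{13},B_{14},B_{15},B_{25},B_{34},B_{35},C_{13},C_{14},C_{15},C_{25},D_{15}$. The content of this corollary is that six of these coefficients are not free: once the Jacobi identity \eqref{jacobi} is imposed, the coefficients $B_{15},B_{25},B_{35},C_{15},C_{25},D_{15}$ are forced to equal the specific polynomial expressions \eqref{B15-B25-B35} and \eqref{C15-C25-D15} in the remaining seven coefficients $A,B_{12},B_{13},B_{14},B_{34},C_{13},C_{14}$. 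So the corollary is essentially a bookkeeping statement: it repackages the work done in the text between Proposition~\ref{red-equations} and the corollary.

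First I would recall that the Jacobi identity on a Lie algebra defined via structure equations $de^l=\sum c^l_{ij}e^{ij}$ is equivalent to $d(de^l)=0$ for every $l$, and equivalently to the vanishing of the quantities $c^l_{ijk}$ defined in the excerpt as the $E_l$-component of the cyclic sum of double brackets. The text has already carried out the relevant computations: it records that $c_{134}^4=c_{134}^3=c_{134}^2=0$ is equivalent to the three equations \eqref{B15-B25-B35}, and that $c_{123}^3=c_{124}^3=c_{123}^4=0$ is equivalent to the three equations \eqref{C15-C25-D15}. The crucial observation to exploit is that each of these six equations is \emph{linear} in exactly one of the six dependent coefficients: $B_{15}$ appears only in the first equation of \eqref{B15-B25-B35}, $B_{25}$ only in the second, $B_{35}$ only in the third, and similarly $C_{15},C_{25},D_{15}$ each appear linearly (with coefficient $2$) in exactly one equation of \eqref{C15-C25-D15}. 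Hence these six Jacobi components can be \emph{solved uniquely} for $B_{15},B_{25},B_{35},C_{15},C_{25},D_{15}$ as the stated polynomials in the seven free coefficients.

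The proof then proceeds as follows. I would invoke Proposition~\ref{red-equations} to obtain the basis and the structure equations \eqref{differentials}. Then I would note that among the full list of Jacobi conditions $c^l_{ijk}=0$, the particular six just described determine the six coefficients $B_{15},B_{25},B_{35},C_{15},C_{25},D_{15}$ by solving the corresponding linear equations, yielding precisely \eqref{B15-B25-B35} and \eqref{C15-C25-D15}. After this substitution, those six Jacobi components are automatically satisfied, so imposing the full Jacobi identity \eqref{jacobi} reduces to imposing only the remaining components, now regarded as conditions on the seven surviving parameters $A,B_{12},B_{13},B_{14},B_{34},C_{13},C_{14}$. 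This gives exactly the statement of the corollary.

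The only genuine point to verify — and the step I would be most careful about — is that after back-substituting \eqref{B15-B25-B35} and \eqref{C15-C25-D15}, no \emph{further} independent relations among the seven remaining coefficients are generated by the six equations we used; that is what licenses the claim that the seven coefficients $A,B_{12},B_{13},B_{14},B_{34},C_{13},C_{14}$ remain free subject only to the leftover Jacobi conditions. Since each of the six used equations solves for a distinct dependent coefficient linearly, eliminating that coefficient introduces no constraint on the others, so this is immediate and the anticipated obstacle is in fact mild. In writing the proof I would simply state that the corollary follows directly from Proposition~\ref{red-equations} together with equations \eqref{B15-B25-B35} and \eqref{C15-C25-D15}, which express $B_{15},B_{25},B_{35},C_{15},C_{25},D_{15}$ in terms of the other seven coefficients, leaving the Jacobi identity \eqref{jacobi} to be imposed on those seven.
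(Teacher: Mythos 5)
Your proposal is correct and matches the paper's own treatment: the corollary is stated there without a separate proof precisely because it follows immediately from Proposition~\ref{red-equations} together with the displayed computations showing that $c_{134}^4=c_{134}^3=c_{134}^2=0$ and $c_{123}^3=c_{124}^3=c_{123}^4=0$ are equivalent to \eqref{B15-B25-B35} and \eqref{C15-C25-D15}, each of which solves linearly for exactly one of $B_{15},B_{25},B_{35},C_{15},C_{25},D_{15}$. Your extra remark that this elimination introduces no hidden relations among the remaining seven coefficients is the same (essentially trivial) observation the paper leaves implicit.
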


\begin{proposition}\label{8families}
Let $\mathfrak g$ be a solvable Lie algebra with a basis
$e^1,\ldots,e^5$ for $\mathfrak g^*$ in the conditions of
Proposition~\ref{red-equations}. Then, the structure equations
\eqref{differentials} reduce to one of the following six families:
\begin{equation}\label{Azero-ecus-1}
\left\{
\begin{array}{l}
d e^1= 0,\\[4pt]
d e^2= r e^{12},\\[4pt]
d e^3=  r e^{13},\\[4pt]
d e^4=  - r e^{14} - 3 r^2  e^{15} + 2 r e^{23},\\[4pt]
d e^5 = -2 e^{14} - 2 e^{23},
\end{array}
\right.
\end{equation}
where $r\in \mathbb{R}^*$; moreover, $\mathfrak g^1=\langle
E_2,E_3,E_4,E_5 \rangle$, $\mathfrak g^2=\langle r E_4-E_5 \rangle$
and $\mathfrak g^3=0$.
\begin{equation}\label{Azero-ecus-2}
\left\{
\begin{array}{l}
d e^1= 0,\\[4pt]
d e^2= r e^{12} + 3 r e^{34} + 3 r^2 e^{35},\\[4pt]
d e^3=  r e^{13} - 3 r e^{24} - 3 r^2 e^{25} ,\\[4pt]
d e^4= -r d e^5
,\\[4pt]
d e^5 = -2 e^{14} - 2 e^{23},
\end{array}
\right.
\end{equation}
where $r\in \mathbb{R}^*$; moreover, $\mathfrak g^1=\langle E_2,E_3,
r E_4-E_5 \rangle$, $\mathfrak g^2=\langle r E_4-E_5 \rangle$ and
$\mathfrak g^3=0$.
\begin{equation}\label{Azero-ecus-3}
\left\{
\begin{array}{l}
d e^1= 0,\\[4pt]
d e^2= r e^{14} - r e^{23} - a r e^{25} + r^2 e^{35},\\[4pt]
d e^3= {a\over r} de^2
,\\[4pt]
d e^4= r e^{12} + a e^{13}  - (a^2+r^2) e^{15} + a e^{24} - r e^{34},\\[4pt]
d e^5 = -2 e^{14} - 2 e^{23},
\end{array}
\right.
\end{equation}
where $a\in \mathbb{R}$ and $r\in \mathbb{R}^*$; moreover,
$\mathfrak g^1=\langle r E_2+a E_3,E_4,E_5 \rangle$ and $\mathfrak
g^2=0$.
\begin{equation}\label{Azero-ecus-4}
\left\{
\begin{array}{l}
d e^1= d e^2= 0,\\[4pt]
d e^3= a e^{13} + b e^{14} - b e^{23} + a e^{24} - (a^2+b^2) e^{25},\\[4pt]
d e^4= b e^{13} - a e^{14} - (a^2+b^2) e^{15} + a e^{23} + b e^{24},\\[4pt]
d e^5 = -2 e^{14} - 2 e^{23},
\end{array}
\right.
\end{equation}
where $a,b\in \mathbb{R}$; moreover, if $a$ or $b$ is nonzero then
$\mathfrak g^1=\langle E_3,E_4,E_5 \rangle$ and $\mathfrak g^2=0$.
\begin{equation}\label{Azero-ecus-5}
\left\{
\begin{array}{l}
d e^1= 0,\\[4pt]
d e^2=  r e^{34} + {r^2 \over 2}  e^{35},\\[4pt]
d e^3=  r e^{13},\\[4pt]
d e^4= - {r^2\over 2} e^{15} + r e^{23},\\[4pt]
d e^5 = -2 e^{14} - 2 e^{23},
\end{array}
\right.
\end{equation}
where $r\in \mathbb{R}^*$; moreover, $\mathfrak g^1=\langle
E_2,E_3,E_4,E_5 \rangle$, $\mathfrak g^2=\langle E_2, r E_4-2E_5
\rangle$ and $\mathfrak g^3=0$.
\begin{equation}\label{Azero-ecus-7}
\left\{
\begin{array}{l}
d e^1= 0,\\[4pt]
d e^2= r e^{12} + a e^{13} + a e^{24}+ {a\over 2r}(r^2 + a^2) e^{25}
+
{a^2\over r} e^{34} + {a^2\over 2r^2}(r^2 + a^2) e^{35},\\[4pt]
d e^3= a e^{12} + {a^2 \over r} e^{13} - r e^{24} - {1\over 2}(r^2 +
a^2) e^{25} - a
e^{34} - {a\over 2r}(r^2 + a^2) e^{35},\\[4pt]
d e^4= - {(r^2+a^2)^2\over 2r^2} e^{15} + {r^2 + a^2 \over r} e^{23},\\[4pt]
d e^5 = -2 e^{14} - 2 e^{23},
\end{array}
\right.
\end{equation}
where $r\in \mathbb{R}^*$ and $a\in \mathbb{R}$; moreover,
$\mathfrak g^1=\langle E_2,E_3,E_4,E_5 \rangle$, $\mathfrak
g^2=\langle a E_2-r E_3, (r^2+a^2)E_4-2 r E_5 \rangle$ and
$\mathfrak g^3=0$.
\end{proposition}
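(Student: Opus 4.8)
The remaining content of the proposition is the solution of a finite polynomial system. By Corollary~\ref{red-equations-2} a hypo-contact structure on $\mathfrak g$ is determined, in the basis of Proposition~\ref{red-equations}, by the seven scalars $A,B_{12},B_{13},B_{14},B_{34},C_{13},C_{14}$, once the six coefficients $B_{15},B_{25},B_{35},C_{15},C_{25},D_{15}$ have been fixed by \eqref{B15-B25-B35} and \eqref{C15-C25-D15}. The plan is to write out every Jacobi component $c_{ijk}^{l}=0$ that has \emph{not} already been consumed in producing \eqref{B15-B25-B35} and \eqref{C15-C25-D15}, substitute those six expressions, and so turn the full condition \eqref{jacobi} into an explicit system of polynomial equations in the seven parameters. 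What must then be shown is that the real solution set of this system, transcribed back through \eqref{differentials}, is exhausted by the six families \eqref{Azero-ecus-1}--\eqref{Azero-ecus-7}, and that for each family the derived-series data $\mathfrak g^{1},\mathfrak g^{2},\mathfrak g^{3}$ recorded in the statement holds.

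I would solve the system by a branching case analysis. The first and decisive branch is on the coefficient $A$. Since every listed family has $de^{1}=0$, i.e.\ $A=0$, I expect the conditions $d(de^{i})=0$ for $i=2,3,4$ — into which $A$ enters both directly and through the $A$-dependence of the coefficients fixed in \eqref{B15-B25-B35} and \eqref{C15-C25-D15} — to force $A=0$, either outright or after reselecting the closed generator $e^{1}$ of Proposition~\ref{red-equations}. With $A=0$ the surviving equations are relations among $B_{12},B_{13},B_{14},B_{34},C_{13},C_{14}$, and I would resolve them by branching on which of these vanish. Each consistent vanishing pattern, together with the quadratic relations that Jacobi forces on the nonzero parameters (for instance $C_{13}=B_{12}$ leading to \eqref{Azero-ecus-1}, or $de^{2}=0$, i.e.\ $B_{12}=B_{13}=B_{14}=B_{34}=0$, leading to \eqref{Azero-ecus-4}), singles out one of the six families; the last free scalar is renamed $r\in\mathbb R^{*}$, with a second genuine modulus $a$ or $b$ surviving in \eqref{Azero-ecus-3}, \eqref{Azero-ecus-4} and \eqref{Azero-ecus-7}. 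The derived series is then read directly off the brackets dual to each normal form.

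The main obstacle is organizational rather than conceptual: the substituted polynomial system is sizeable, and its resolution hinges on several non-obvious factorizations — I anticipate recurring factors such as $B_{14}^{2}+C_{14}^{2}$ and, after the renaming, $r^{2}+a^{2}$ — so the delicate point is to branch in such a way that the cases are genuinely exhaustive, that no solution is lost when one divides by a parameter, and that after the final renaming no two of the six families secretly coincide. Confirming this exhaustiveness, and checking that the boundary value excluded in each family (such as $r=0$) degenerates into a neighbouring family rather than yielding a seventh type, is where the care lies.
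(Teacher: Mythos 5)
Your overall strategy (substitute \eqref{B15-B25-B35} and \eqref{C15-C25-D15} into the remaining Jacobi components and resolve the resulting polynomial system by a branching case analysis) is indeed the skeleton of the paper's proof, but your plan contains a genuine gap: it is \emph{not} true that "the real solution set of this system, transcribed back through \eqref{differentials}, is exhausted by the six families". The Jacobi system together with the hypo-contact conditions admits solution branches whose underlying Lie algebras are not solvable, and the paper must invoke the solvability hypothesis a second time, inside the case analysis, to discard them. Concretely: in the branch $A=0$, $B_{12}=B_{13}=B_{14}=0$, $B_{34}\neq 0$, Jacobi only forces $C_{13}^2+C_{14}^2-B_{34}C_{13}=0$, and among these solutions only the subcase $C_{14}=0\neq C_{13}$ is solvable, giving \eqref{Azero-ecus-5}; likewise in the branch $B_{13}^2=B_{12}C_{13}$, $B_{12}\neq 0$, $B_{34}\neq -B_{12}$, it is solvability (not Jacobi) that forces $B_{14}=0$ and yields \eqref{Azero-ecus-7}. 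The discarded subcases are exactly the non-solvable hypo-contact Lie algebras recorded in Remark~\ref{clasif-more-gen}. A purely algebraic resolution of the Jacobi system, as you propose, would terminate with more than six families and no way to match the statement unless you notice that solvability must be re-imposed branch by branch.

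A second inaccuracy concerns the coefficient $A$: the Jacobi identity does not force $A=0$. For $A\neq 0$ there are genuine solutions (Jacobi gives $B_{14}=C_{14}=0$, $B_{12}=B_{34}-2C_{13}$, and then $B_{13}=-\frac{3}{2}A$, $B_{34}=a B_{13}$, $C_{13}=a(A+B_{13})$, $B_{35}=-C_{25}=\frac{3}{4}A^2(1+a^2)$), and identifying this two-parameter branch with the one-parameter family \eqref{Azero-ecus-2} requires first solving that system and then exhibiting an explicit change of basis mixing $e^1,\ldots,e^4$ which preserves the three forms in \eqref{structure}; your hedge about "reselecting the closed generator $e^1$" gestures at this but understates what has to be done. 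Moreover the same phenomenon occurs already inside $A=0$: the branch $B_{34}=-B_{12}\neq 0$ (with $C_{13}=B_{13}^2/B_{12}$, $C_{14}=B_{13}B_{14}/B_{12}$) carries three free parameters $(B_{12},B_{13},B_{14})$ and is brought to the two-parameter family \eqref{Azero-ecus-3} only by a rotation whose angle $\theta$ satisfies $\cos 3\theta=B_{14}(B_{12}^2+B_{13}^2+B_{14}^2)^{-1/2}$. So "renaming the last free scalar" is not the right picture: some Jacobi branches have more parameters than the family they land in, and the reduction is a non-trivial equivalence of hypo-contact structures, not a relabelling. (Your worry that two of the six families might "secretly coincide" is, by contrast, harmless: the proposition only asserts that every solvable solution reduces to one of the six, and in fact families \eqref{Azero-ecus-3}/\eqref{Azero-ecus-4} and \eqref{Azero-ecus-5}/\eqref{Azero-ecus-7} do give isomorphic Lie algebras, as the paper shows later.)
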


\begin{proof}
We divide the proof in two cases: $A=0$ and $A\not=0$.

Let us suppose first that $A=0$. By Corollary~\ref{red-equations-2}
the coefficients $B_{15}, B_{25}, B_{35}, C_{15}, C_{25}, D_{15}$
are determined by $B_{12}, B_{13}, B_{14}, B_{34}, C_{13}, C_{14}$,
and they are explicitly given by \eqref{B15-B25-B35} and
\eqref{C15-C25-D15}. A direct calculation shows that
$$
\begin{array}{rl}
& c_{125}^4 =0 \quad \Leftrightarrow\quad B_{13} (-3 B_{12} - B_{34}
-
2 C_{13}) B_{14} + (3 B_{12}^2 + 2 B_{13}^2 + B_{12} B_{34}) C_{14} =0;\\[7pt]
& c_{135}^4=0 \quad\Leftrightarrow\quad (-2 B_{13}^2 - B_{12} C_{13}
- B_{34} C_{13} - 2 C_{13}^2) B_{14}
+ B_{13} (3 B_{12} + B_{34} + 2 C_{13}) C_{14} =0;\\[7pt]
& c_{145}^2=0 \quad\Leftrightarrow\quad B_{13} (B_{34} - C_{13}) B_{14} + (B_{13}^2 - B_{12} B_{34}) C_{14} =0;\\[7pt]
& c_{145}^3=0 \quad\Leftrightarrow\quad (B_{13}^2 - B_{12} C_{13} -
B_{34} C_{13} + C_{13}^2) B_{14} + B_{13} (B_{34} - C_{13}) C_{14}
=0.
\end{array}
$$

Let us denote by $\rho_{ij}$ the determinant of the system given by
the equations $i$ and $j$ above, i.e.
$$
\begin{array}{rl}
& \rho_{12}= (B_{13}^2 - B_{12} C_{13}) (-3 B_{12}^2 + 4 B_{13}^2 -
4 B_{12} B_{34} -
B_{34}^2 - 6 B_{12} C_{13} - 2 B_{34} C_{13}),\\[7pt]
& \rho_{13}= -3 B_{13} (B_{12} + B_{34}) (B_{13}^2 - B_{12} C_{13}),\\[7pt]
& \rho_{14}= -(B_{13}^2 - B_{12} C_{13}) (3 B_{12}^2 + 2 B_{13}^2 +
4 B_{12} B_{34} + B_{34}^2 - 3 B_{12} C_{13} - B_{34} C_{13}),\\[7pt]
& \rho_{23}= -(B_{13}^2 - B_{12} C_{13}) (2 B_{13}^2 + B_{12} B_{34}
+ B_{34}^2 + 2 B_{34}
C_{13}),\\[7pt]
& \rho_{24}= - 3 B_{13}(B_{12} + B_{34}) (B_{13}^2 - B_{12} C_{13}) = \rho_{13},\\[7pt]
& \rho_{34}= - (B_{13}^2 - B_{12} C_{13}) (B_{13}^2 - B_{12} B_{34}
- B_{34}^2 + B_{34} C_{13}).
\end{array}
$$

\noindent {\bf Case 1:} At least one of the determinants $\rho_{ij}$
is nonzero. In this case, $B_{14}=C_{14}=0$. Moreover,
$$c_{125}^2= B_{13}(2 B_{13}^2 +
B_{12} B_{34} + B_{34}^2 - 3 B_{12} C_{13} - B_{34} C_{13}),$$ and
$$c_{245}^3= -2 B_{13}(B_{13}^2 -
B_{12} B_{34} - B_{34}^2 + B_{34} C_{13}),$$ which implies that
$B_{13}(B_{13}^2 - B_{12} C_{13})=0$ in order to the Jacobi identity
be satisfied. Since $\rho_{ij}\not=0$ for some $i,j$, it is
necessary that $B_{13}=0$.

Since $B_{13}=B_{14}=C_{14}=0$ the equations \eqref{differentials}
reduce to
\begin{equation}\label{differentials-case1}
\left\{
\begin{array}{l}
d e^1= 0,\\[7pt]
d e^2= B_{12} e^{12} + B_{34}
e^{34} + {1\over 2}B_{34}(B_{34}-B_{12}) e^{35},\\[7pt]
d e^3=  C_{13} e^{13}  - (B_{12} + B_{34}-C_{13}) e^{24} -{1\over
2}(B_{12} + B_{34} - 2 C_{13}) (B_{12} + B_{34} -
C_{13}) e^{25} ,\\[7pt]
d e^4=  (B_{34} - C_{13}) e^{14} -{1\over 2}(B_{12} + C_{13})
(B_{12} - B_{34} + 2 C_{13}) e^{15} +
(B_{12} + C_{13}) e^{23} ,\\[7pt]
 d e^5 = -2 e^{14} - 2 e^{23},
\end{array}
\right.
\end{equation}
because $B_{15}=B_{25}=C_{15}=0$, $B_{35}={1\over 2}B_{34} (B_{34} -
B_{12})$, $C_{25}=-{1\over 2}(B_{12} + B_{34} - 2 C_{13}) (B_{12} +
B_{34} - C_{13})$ and $D_{15}=-{1\over 2}(B_{12} + C_{13}) (B_{12} -
B_{34} + 2 C_{13})$. Now, the Jacobi identity is satisfied if and
only if $B_{12} (B_{34} - 3 C_{13}) (B_{12} + B_{34} - C_{13})=0$,
$B_{34} C_{13} (2 B_{12} - B_{34} + C_{13})=0$ and $B_{34} (B_{12} -
C_{13}) (B_{12} + B_{34} - C_{13})=0$. But, the nonvanishing of some
$\rho_{ij}$ implies that $B_{12}$ and $C_{13}$ cannot be zero, so
\begin{equation}\label{case1}
\begin{array}{c}
 (B_{34} - 3 C_{13}) (B_{12} + B_{34} - C_{13})=0, \quad
 B_{34} (2 B_{12} - B_{34} + C_{13})=0,\\[5pt]
 B_{34} (B_{12} - C_{13})(B_{12} + B_{34} - C_{13})=0.
\end{array}
\end{equation}
If $B_{34}=0$ then \eqref{case1} implies $C_{13}=B_{12}\not=0$ and
from~\eqref{differentials-case1} we get~\eqref{Azero-ecus-1} with
$r=B_{12} \in \mathbb{R}^*$. Otherwise, $B_{34}\not=0$ implies that
$B_{34}= 3 B_{12}$ and $C_{13}= B_{12}$, and
equations~\eqref{differentials-case1} reduce to~\eqref{Azero-ecus-2}
with $r=B_{12} \in \mathbb{R}^*$.

\medskip

\noindent {\bf Case 2:} All the determinants $\rho_{ij}$ vanish.
First, we prove that $B_{13}^2 = B_{12} C_{13}$.

In fact, if $B_{13}^2 \not= B_{12} C_{13}$ then all the determinants
$\rho_{ij}$ vanish if and only if $B_{13} (B_{12} + B_{34}) =0$ and
\begin{equation}\label{ros21}
\begin{array}{rl}
& 3 B_{12}^2 - 4 B_{13}^2 + 4 B_{12} B_{34} +
B_{34}^2 + 6 B_{12} C_{13} + 2 B_{34} C_{13} =0,\\[7pt]
& 3 B_{12}^2 + 2 B_{13}^2 + 4 B_{12} B_{34} + B_{34}^2 - 3 B_{12}
C_{13} - B_{34} C_{13} =0,\\[7pt]
&  2 B_{13}^2 + B_{12} B_{34} + B_{34}^2 + 2 B_{34} C_{13} =0,\quad
 B_{13}^2 - B_{12} B_{34} - B_{34}^2 + B_{34} C_{13}=0.
\end{array}
\end{equation}
Notice that $B_{13}$ must be zero, because otherwise
$B_{34}=-B_{12}$ and the equations~\eqref{ros21} would reduce to
$B_{13}^2 - B_{12} C_{13} =0$, contradicting our assumption. Since
$B_{13}=0$ we have that $B_{12},C_{13}\not=0$ and~\eqref{ros21}
become
$$
\begin{array}{rl}
& 3 B_{12}^2 + 6 B_{12} C_{13} + B_{34}(4 B_{12} + B_{34} + 2 C_{13}) =0,\\[7pt]
& 3 B_{12}^2 - 3 B_{12} C_{13} + B_{34}(4 B_{12} + B_{34} - C_{13})
=0,\\[7pt]
&  B_{34}(B_{12} + B_{34} + 2  C_{13}) =0,\quad
 B_{34} (B_{12} +  B_{34} - C_{13})=0.
\end{array}
$$
From the last two equations we have that $B_{34}=0$, because
$C_{13}$ is nonzero. But in such case the first two equations are
satisfied if and only if $B_{12} C_{13}=0$, which is again a
contradiction. Therefore, we conclude that there are no solutions if
$B_{13}^2 \not= B_{12} C_{13}$.

\medskip

\noindent{\bf Case 2.1:} If $B_{13}^2 = B_{12} C_{13}$ and
$B_{12}=0$, then $B_{13}=0$ and the Jacobi identity is satisfied if
and only if
$$
B_{14}B_{34}=B_{14}C_{13}=B_{34}(B_{34}C_{13}-C_{13}^2-C_{14}^2)=0.
$$
So, if $B_{14}\not=0$ then $B_{34}=C_{13}=0$, and the equations
\eqref{differentials} reduce to~\eqref{Azero-ecus-3} with
$a=C_{14}\in \mathbb{R}$ and $r=B_{14}\in \mathbb{R}^*$.

\noindent On the other hand, if $B_{14}=B_{34}=0$ then we obtain
equations \eqref{Azero-ecus-4} with $a=C_{13}$ and $b=C_{14}\in
\mathbb{R}$.

Finally, let us suppose that $B_{14}=0$ and $B_{34}\not=0$, which
implies that $C_{13}^2 + C_{14}^2 - B_{34}C_{13}=0$. A long but
direct calculation shows that the corresponding Lie algebra is
solvable only for $C_{13}\not=0$ and $C_{14}=0$, and in this case
the equations \eqref{differentials} reduce to~\eqref{Azero-ecus-5}
with $r=C_{13}\in \mathbb{R}^*$.

\medskip

\noindent{\bf Case 2.2:} If $B_{13}^2 = B_{12} C_{13}$ and
$B_{12}\not=0$, then $C_{13}=B_{13}^2/B_{12}$ and
$$
\begin{array}{rl}
& c_{125}^4=0 \quad\quad\Leftrightarrow \quad\quad
(3B_{12}^2+2B_{13}^2+B_{12}B_{34})(B_{13}B_{14} - B_{12}C_{14})=0,\\[7pt]
& c_{145}^2=0 \quad\quad\Leftrightarrow \quad\quad
(B_{13}^2-B_{12}B_{34})(B_{13}B_{14}- B_{12}C_{14})=0,
\end{array}
$$
which implies that
$$
(B_{12}^2+B_{13}^2)(B_{13}B_{14} - B_{12}C_{14})=0.
$$
Since $B_{12}\not=0$ we get $C_{14}=B_{13}B_{14}/B_{12}$. A direct
calculation shows that the Jacobi identity holds if and only if
$$
(B_{12} + B_{34})(B_{13}^2 + B_{14}^2 - B_{12} B_{34})=0.
$$
We distinguish two cases:

\medskip

\noindent{\bf (i)} $B_{34}=-B_{12}\not=0$: in this case the
equations \eqref{differentials} reduce to~\eqref{Azero-ecus-3}. In
fact, take $\theta\in(0,\frac{2\pi}{3})$ such that
$\cos3\theta=B_{14}(B_{12}^2+B_{13}^2+B_{14}^2)^{-\frac12}$,
$\sin3\theta=(B_{12}^2+B_{13}^2)^{\frac12}(B_{12}^2+B_{13}^2+B_{14}^2)^{-\frac12}$.
Then, from \eqref{differentials} we have that the new basis
\[
\begin{array}{rcl}
f^1 \!\!\!&=&\!\!\! \cos\theta\, e^1 + \sin\theta\,
B_{13}(B_{12}^2+B_{13}^2)^{-\frac12} e^2 -
\sin\theta\, B_{12} (B_{12}^2+B_{13}^2)^{-\frac12} e^3,\\[9pt]
f^2 \!\!\!&=&\!\!\! -\sin\theta\,
B_{13}(B_{12}^2+B_{13}^2)^{-\frac12} e^1 +
\cos\theta\, e^2 - \sin\theta\, B_{12}(B_{12}^2+B_{13}^2)^{-\frac12} e^4,\\[9pt]
f^3 \!\!\!&=&\!\!\! \sin\theta\, B_{12}
(B_{12}^2+B_{13}^2)^{-\frac12} e^1 +
\cos\theta\, e^3 - \sin\theta\, B_{13} (B_{12}^2+B_{13}^2)^{-\frac12} e^4,\\[9pt]
f^4 \!\!\!&=&\!\!\! \sin\theta\, B_{12}
(B_{12}^2+B_{13}^2)^{-\frac12} e^2 +
\sin\theta\, B_{13} (B_{12}^2+B_{13}^2)^{-\frac12} e^3 + \cos\theta\, e^4,\\[9pt]
f^5 \!\!\!&=&\!\!\! e^5,
\end{array}
\]
satisfies~\eqref{Azero-ecus-3} for
$a=\displaystyle\epsilon\frac{B_{13}}{B_{12}}\sqrt{B_{12}^2+B_{13}^2+B_{14}^2}$
and $r=\epsilon\sqrt{B_{12}^2+B_{13}^2+B_{14}^2}$ with $\epsilon=\pm
1$. Moreover, $f^{12}+f^{34}=e^{12}+e^{34}$,
$f^{13}+f^{42}=e^{13}+e^{42}$ and $f^{14}+f^{23}=e^{14}+e^{23}$.

\medskip
\noindent{\bf (ii)} If $B_{34}\not= -B_{12}$ then $B_{34}=(B_{13}^2
+ B_{14}^2)/ B_{12}$. Now, a long but direct calculation shows that
the corresponding Lie algebra is solvable only for $B_{14}=0$, and
in this case the equations \eqref{differentials} reduce to
\eqref{Azero-ecus-7} with $r=B_{12}\not=0$ and $a=B_{13}\in
\mathbb{R}$. This completes the proof of the case $A=0$.

\medskip

Suppose next that $A\not=0$. By Corollary~\ref{red-equations-2} the
coefficients $B_{15}, B_{25}, B_{35}, C_{15}, C_{25}$ and $D_{15}$
are determined by the coefficients $A, B_{12}, B_{13}, B_{14},
B_{34}, C_{13}$ and $C_{14}$. A direct calculation shows that
$$c_{234}^2=2 A\, B_{14},\quad\quad c_{234}^3=2A\, C_{14}, \quad\quad   c_{234}^4=
-A(B_{12}-B_{34}+2C_{13}),
$$
so the Jacobi identity implies that $B_{14}=C_{14}=0$ and
$B_{12}=B_{34}-2C_{13}$. Therefore, there are only the four
remaining coefficients $A, B_{13}, B_{34}$ and $C_{13}$. Moreover,
from \eqref{B15-B25-B35} and \eqref{C15-C25-D15} we get
$B_{15}=C_{15}=D_{15}=0$, thus the differentials of $e^i$ have the
form
\begin{equation}\label{differentials-Anonzero}
\left\{
\begin{array}{l}
d e^1= A e^{14} +  A e^{23},\\[7pt]
d e^2= (B_{34}-2C_{13}) e^{12} + B_{13} e^{13} + (2 A + B_{13})
e^{24} + B_{25} e^{25} + B_{34} e^{34}
+ B_{35} e^{35},\\[7pt]
d e^3= (3 A + B_{13}) e^{12} + C_{13} e^{13} - (2B_{34}-3C_{13})
e^{24}+
C_{25} e^{25} - (A + B_{13}) e^{34} - B_{25} e^{35},\\[7pt]
d e^4=  (B_{34} - C_{13})\, ( e^{14} +
e^{23}),\\[7pt]
d e^5 = -2 e^{14} -2  e^{23}.
\end{array}
\right.
\end{equation}

It can be proved directly that the Jacobi identity for
\eqref{differentials-Anonzero} is satisfied if and only if
\begin{equation}\label{sist-1}
(3 A + B_{13}) B_{35} - B_{13} C_{25}=0, \quad
 (-2 B_{34} + 3 C_{13}) B_{35} - B_{34} C_{25}=0,
\end{equation}
and
\begin{equation}\label{sist}
\begin{array}{rl}
& B_{25} - 2 A\, B_{34} - 2 B_{13} B_{34} + 2 B_{13}
C_{13}=0,\\[7pt]
& 6 A^2 + 5 A\, B_{13} + B_{13}^2 + 2 B_{34}^2 - 7 B_{34} C_{13} +6 C_{13}^2 +C_{25}=0,\\[7pt]
& -6 A\, B_{25} - 2 B_{13} B_{25} + B_{34} C_{25} - 3 C_{13} C_{25}=0,\\[7pt]
& A\, B_{13} + B_{13}^2 - B_{35} + B_{34} C_{13}=0,\\[7pt]
& 2 B_{13} B_{25} - B_{34} B_{35} + 3 B_{35} C_{13}=0,\\[7pt]
& 4 B_{25} B_{34} - 6 B_{25} C_{13} + 3 A\, C_{25} + 2 B_{13} C_{25}=0,\\[7pt]
& 2 B_{25} B_{34} - 3 A\, B_{35} - 2 B_{13} B_{35}=0.
\end{array}
\end{equation}

Let $\rho=-3(  A B_{34} + B_{34} B_{13} - B_{13} C_{13})$ be the
determinant of the linear system~\eqref{sist-1}. If $\rho\not=0$
then $B_{35}=C_{25}=0$, and the first equation in \eqref{sist}
implies that $B_{25}=-{2\over 3}\rho\not= 0$, so in order to be
satisfied the remaining equations in \eqref{sist} we must have
$B_{13}=B_{34}=C_{13}=0$, which is in contradiction with the second
equation in \eqref{sist}. Therefore, $\rho=0$ and there is $a\in
\mathbb{R}$ such that $-2 B_{34} + 3 C_{13} =3 a\, A  + a\, B_{13} $
and $B_{34}=a\, B_{13}$. From the first equation in \eqref{sist} we
get
\begin{equation}\label{sist-2}
B_{34}=a\, B_{13},\quad\quad   C_{13} =a(A + B_{13}),\quad\quad
B_{25}=0,
\end{equation}
and \eqref{sist-1}-\eqref{sist} reduce to
\begin{equation}\label{sist-red}
\begin{array}{rl}
& 3 A\, B_{35} + B_{13} B_{35} - B_{13} C_{25}=0, \quad
 C_{25} + (6 A^2 + 5 A\, B_{13} + B_{13}^2) (1+a^2)=0,\\[7pt]
& B_{35} - B_{13}(A + B_{13}) (1+a^2) =0,\quad (3 A + 2 B_{13})
C_{25}=0, \quad (3 A + 2 B_{13}) B_{35}=0.
\end{array}
\end{equation}
Notice that $B_{13} \not= -{3\over 2}A$ implies $B_{35}=C_{25}=0$,
and from the third equation in \eqref{sist-red} it follows that
$B_{13}=0,-A$, which does not solve the second equation in
\eqref{sist-red}. Therefore, $B_{13}= -{3\over 2}A$ and the solution
to~\eqref{sist-red} is $B_{35} = {3\over 4} A^2 (1+a^2)$ and $C_{25}
= -B_{35}$. From \eqref{differentials-Anonzero} and \eqref{sist-2}
we get that the new basis $f^1=\nu(a\, e^1+e^4)$, $f^2=\nu(a\,
e^2-e^3)$, $f^3=\nu(e^2+a\, e^3)$, $f^4=\nu(-e^1+a\, e^4)$,
$f^5=e^5$, where $\nu=-(1+a^2)^{-1/2}$, satisfies
\eqref{Azero-ecus-2} with $r=-\nu^{-1} A/2$ and
$f^{12}+f^{34}=e^{12}+e^{34}$, $f^{13}+f^{42}=e^{13}+e^{42}$,
$f^{14}+f^{23}=e^{14}+e^{23}$. That is to say, the case $A\not=0$
reduces to \eqref{Azero-ecus-2} and the proof of the proposition is
complete.
\end{proof}

\begin{remark}\label{clasif-more-gen}
Notice that the condition $[\frg,\frg]\not= \frg$ implies that
$\frg^*$ has a nonzero element which is closed, so the proof of
Proposition~\ref{red-equations} still holds for Lie algebras
satisfying $[\frg,\frg]\not= \frg$, even when they are not solvable.
Moreover, the proof of Proposition~\ref{8families} shows that if
such a Lie algebra admits hypo-contact structure then it belongs to
Case 2.1 with $B_{34}\not= 0 = B_{14}= C_{13}=C_{14}$, Case 2.1 with
$B_{34} C_{13} C_{14}\not= 0 = B_{14}$, or Case~2.2 with
$(B_{34}+B_{12})B_{14}\not= 0$.
\end{remark}

\medskip

Now, using Proposition~\ref{8families}, we obtain the classification
of solvable hypo-contact Lie algebras.

\medskip

\noindent{\bf Proof of Theorem~\ref{clasification}~:} A solvable Lie
algebra with a hypo-contact structure belongs, by
Proposition~\ref{8families}, to one of the six families
\eqref{Azero-ecus-1}--\eqref{Azero-ecus-7}. Therefore, in order to
prove the theorem, it suffices to show that $\frh_1,\ldots,\frh_5$
are the Lie algebras underlying these families. For the family
\eqref{Azero-ecus-1}, the new basis
$$
\alpha^1= 2e^4-3re^5, \quad \alpha^2=5e^3, \quad
\alpha^3=2re^2,\quad  \alpha^4= -3e^4-3re^5, \quad \alpha^5=re^1
$$
satisfies
$$
d \alpha^1= -2\alpha^{15}-\alpha^{23},\quad d \alpha^2=
-\alpha^{25},\quad d \alpha^3=  -\alpha^{35},\quad d \alpha^4=
3\alpha^{45},\quad d \alpha^5 = 0.
$$
Therefore, any Lie algebra in the family \eqref{Azero-ecus-1} is
isomorphic to $\frh_2$.

Any Lie algebra in the family \eqref{Azero-ecus-2} is isomorphic to
$\frh_3$. In fact, with respect to the new basis
$$
\alpha^1= re^4, \quad \alpha^2=\sqrt{2}re^3, \quad
\alpha^3=\sqrt{2}re^2,\quad \alpha^4= re^1, \quad
\alpha^5=3re^4+3r^2e^5,
$$
the equations \eqref{Azero-ecus-2} become
$$
d \alpha^1= -2\alpha^{14}-\alpha^{23},\quad d \alpha^2=
-\alpha^{24}-\alpha^{35},\quad d \alpha^3=
\alpha^{25}-\alpha^{34},\quad d \alpha^4= d \alpha^5 = 0.
$$

Any Lie algebra in the family \eqref{Azero-ecus-3} is isomorphic to
$\frh_4$, because with respect to the new basis
\[
\begin{array}{rcl}
\alpha^1&=& 2 e^2+ r\, e^5,\\[5pt]
\alpha^2&=&\displaystyle\frac{\sqrt{3}\,a}{r}e^1-\frac{\sqrt{a^2+r^2}}{r}e^2+
\sqrt{3}\, e^4+\sqrt{a^2+r^2}\, e^5,\\[7pt]
\alpha^3&=&\displaystyle-\frac{\sqrt{3}\,
a}{r}e^1-\frac{\sqrt{a^2+r^2}}{r}e^2
-\sqrt{3}\, e^4+\sqrt{a^2+r^2}\,e^5,\\[7pt]
\alpha^4&=& -2a\, e^2+ 2r\, e^3,\\[7pt]
\alpha^5&=&-\sqrt{3}\sqrt{a^2+r^2}\, e^1+ a\, e^2- r\, e^3,
\end{array}
\]
the equations \eqref{Azero-ecus-3} become
$$
d\alpha^1=-\alpha^{14},\quad d\alpha^2=-\alpha^{25},\quad
d\alpha^3=\alpha^{34}+\alpha^{35},\quad d\alpha^4=d\alpha^5=0.
$$

It is clear that $\frh_1$ is obtained when $a=b=0$ in the family
\eqref{Azero-ecus-4}. If $(a,b)\not= (0,0)$ then, after the change
of basis $f^1=e^2$, $f^2=e^1$, $f^3=e^4$, $f^4=e^3$, $f^5=e^5$ if
necessary, we can suppose that $b\not=0$.

Now, let us fix a pair $(a,b)$ with $b\not=0$. Let us consider
equations \eqref{Azero-ecus-3} for the pair $(a,r=b\not=0)$ in terms
of $e^1,\ldots,e^5$. Then, the new basis given by
\begin{equation} \label{24-25-bis}
\begin{array}{rcl}
f^1&=& \sin\sigma\, e^1 - \cos\sigma(\cos\theta\,e^2 -
\sin\theta\,e^3),\\[4pt]
f^2&=& \cos\sigma\, e^1 + \sin\sigma(\cos\theta\,e^2 -
\sin\theta\,e^3),\\[4pt]
f^3&=& \sin\sigma(\sin\theta\,e^2 + \cos\theta\,e^3) + \cos\sigma\,
e^4,\\[4pt]
f^4&=& -\cos\sigma(\sin\theta\,e^2 + \cos\theta\,e^3) + \sin\sigma\,
e^4,\\[4pt]
f^5&=&e^5,
\end{array}
\end{equation}
where $\theta\in (0,2\pi)$ is such that
$\cos\theta=a/\sqrt{a^2+b^2}$, $\sin\theta=b/\sqrt{a^2+b^2}$ and
$\sigma=(\theta-\pi)/3$, satisfies equations of the form
\eqref{Azero-ecus-4} for the given pair $(a,b)$. Therefore, the Lie
algebras underlying \eqref{Azero-ecus-4} are all isomorphic to
$\frh_4$.

Any Lie algebra in the family \eqref{Azero-ecus-5} is isomorphic to
$\frh_5$, because with respect to the new basis
$$
\alpha^1= e^4- {r\over 2}e^5, \quad \alpha^2=-\sqrt{2}\, e^2, \quad
\alpha^3=-e^4-{r\over 2}e^5,\quad \alpha^4= \sqrt{2}\,r\,e^3, \quad
\alpha^5=r\,e^1,
$$
the equations \eqref{Azero-ecus-5} transform into
$$
d \alpha^1= -\alpha^{15}-\alpha^{24},\quad d \alpha^2=
-\alpha^{34},\quad d \alpha^3=  \alpha^{35},\quad d \alpha^4=
-\alpha^{45},\quad d \alpha^5 = 0.
$$

Also, the Lie algebras underlying \eqref{Azero-ecus-7} are all
isomorphic to $\frh_5$.  In fact, let us fix a pair $(a,r)$ with
$r\not=0$, and let us consider equations \eqref{Azero-ecus-5} for
$s=(a^2+r^2)/r$ in terms of $e^1,\ldots,e^5$. Then, the new basis
given by
\begin{equation} \label{26-27}
f^1= e^1, \quad\
 f^2= \cos\theta\, e^2 + \sin\theta\, e^3, \quad \
f^3= -\sin\theta\, e^2 + \cos\theta\, e^3,  \quad \ f^4= e^4, \quad
\ f^5=e^5.
\end{equation}
where $\theta\in (0,2\pi)$ is such that
$\cos\theta=a/\sqrt{a^2+r^2}$ and $\sin\theta=r/\sqrt{a^2+r^2}$,
satisfies equations of the \eqref{Azero-ecus-7} for the given pair
$(a,r)$. Therefore, the Lie algebras underlying \eqref{Azero-ecus-7}
are all isomorphic to $\frh_5$. \QED\medskip

Diatta obtains in \cite{D} a list of solvable contact Lie algebras
in dimension 5 and many of them have non-trivial center. Notice that
$\frh_1,\ldots,\frh_5$ correspond to the Lie algebras {\bf 1}, {\bf
4}$(p=1,q=-3)$, {\bf 22}, {\bf 18}$(p=q=-1)$ and {\bf 15}$(p=-1)$,
respectively, in Diatta's list and that the center of the solvable
Lie algebras $\frh_2,\ldots,\frh_5$ is trivial.

\begin{definition}\label{hypo-contact-isomorphism}
Let $\frg$ and $\tilde\frg$ be Lie algebras endowed with hypo
structures $(\eta,\omega_i)$ and $(\tilde\eta,\tilde\omega_i)$,
respectively. We say that the hypo structures are \emph{equivalent
by rotation} if there is an isomorphism of Lie algebras $F\colon
\frg\longrightarrow \tilde\frg$ such that $\eta=F^*\tilde\eta$,
$\omega_3=F^*\tilde\omega_3$, $\omega_1=\cos\theta\,
F^*\tilde\omega_1 -\sin\theta\, F^*\tilde\omega_2$ and
$\omega_2=\sin\theta\, F^*\tilde\omega_1 +\cos\theta\,
F^*\tilde\omega_2$, for some $\theta$.
\end{definition}

If two hypo structures are equivalent by rotation via $F$ then $F$
preserves the induced metrics. In the following result we show which
families of hypo-contact structures given in
Proposition~\ref{8families} are equivalent by rotation.

\begin{proposition}\label{isomorphic-hypo}
Any hypo-contact structure in the family \eqref{Azero-ecus-3}
(respectively, \eqref{Azero-ecus-7}) is equivalent by rotation to a
hypo-contact structure in the family \eqref{Azero-ecus-4} for some
$(a,b)\not=(0,0)$ (respectively, \eqref{Azero-ecus-5}).
\end{proposition}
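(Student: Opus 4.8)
The plan is to recycle the two explicit changes of coframe already constructed in the proof of Theorem~\ref{clasification}. Recall that \eqref{24-25-bis} rewrites the structure equations \eqref{Azero-ecus-3} (taken with $r=b$) as those of \eqref{Azero-ecus-4} for the pair $(a,b)$, while \eqref{26-27} rewrites \eqref{Azero-ecus-5} (taken with $s=(a^2+r^2)/r$) as those of \eqref{Azero-ecus-7} for the pair $(a,r)$. In each case a single Lie algebra carries two coframes, $\{e^i\}$ and $\{f^i\}$, giving the two families and hence two hypo-contact structures of the standard form \eqref{structure}. I would take $F$ to be the isomorphism furnished by Theorem~\ref{clasification} (equivalently the identity of the underlying algebra, expressed in the two coframes), so that the verification reduces to computing the forms $f^{5}$, $f^{12}+f^{34}$, $f^{13}+f^{42}$, $f^{14}+f^{23}$ in terms of the $e^i$ and comparing with $\eta,\omega_1,\omega_2,\omega_3$ in the sense of Definition~\ref{hypo-contact-isomorphism}.

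First I would substitute the expressions for $f^1,\dots,f^5$ and expand the wedge products. The key simplification is that both coframe changes are assembled from plane rotations: \eqref{26-27} is a single rotation by $\theta$ in the $(e^2,e^3)$--plane, whereas \eqref{24-25-bis} combines a rotation by $\theta$ in the $(e^2,e^3)$--plane (producing the auxiliary $1$--forms $u=\cos\theta\,e^2-\sin\theta\,e^3$ and $v=\sin\theta\,e^2+\cos\theta\,e^3$) with rotations by $\sigma$ in the $(e^1,u)$-- and $(v,e^4)$--planes. Consequently all $\sigma$--dependence cancels through $\cos^2\sigma+\sin^2\sigma=1$, and the rotation by $\theta$ leaves $e^{14}$ and $u\wedge v=e^{23}$ unchanged; thus I expect $f^5=e^5$ and $f^{14}+f^{23}=e^{14}+e^{23}$, i.e. $F^*\tilde\eta=\eta$ and $F^*\tilde\omega_3=\omega_3$ exactly.

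For the remaining pair I expect to obtain, for \eqref{26-27},
\[
f^{12}+f^{34}=\cos\theta\,\omega_1+\sin\theta\,\omega_2,\qquad
f^{13}+f^{42}=-\sin\theta\,\omega_1+\cos\theta\,\omega_2,
\]
and for \eqref{24-25-bis} the same identities with the sign of $\sin\theta$ reversed. In either case this is a $2\times2$ rotation relating $(\omega_1,\omega_2)$ of one family to $(\tilde\omega_1,\tilde\omega_2)$ of the other; after possibly inverting the rotation (equivalently, replacing $\theta$ by $-\theta$) it takes precisely the shape $\omega_1=\cos\theta\,F^*\tilde\omega_1-\sin\theta\,F^*\tilde\omega_2$ and $\omega_2=\sin\theta\,F^*\tilde\omega_1+\cos\theta\,F^*\tilde\omega_2$, which together with the two preserved forms is exactly the requirement of Definition~\ref{hypo-contact-isomorphism}. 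This settles both assertions, the target parameters being admissible since $r\neq0$ forces $(a,b)=(a,r)\neq(0,0)$ in the first case and $s=(a^2+r^2)/r\neq0$ in the second. The computation is entirely routine, so the only point demanding attention—rather than a genuine obstacle—is sign bookkeeping: remembering that $\omega_2=e^{13}+e^{42}=e^{13}-e^{24}$ and then fixing the correct orientation of the rotation angle so that the two rotation identities of Definition~\ref{hypo-contact-isomorphism} hold verbatim rather than up to a sign.
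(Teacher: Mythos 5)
Your proposal is correct and is essentially the paper's own proof: the paper also recycles the changes of basis \eqref{24-25-bis} and \eqref{26-27} from the proof of Theorem~\ref{clasification}, checks $f^5=e^5$ and $f^{14}+f^{23}=e^{14}+e^{23}$, and observes that $(e^{12}+e^{34},\,e^{13}+e^{42})$ and $(f^{12}+f^{34},\,f^{13}+f^{42})$ are related by a plane rotation, exactly as you describe. Your sign bookkeeping is in fact right (a direct expansion of \eqref{26-27} gives $f^{12}+f^{34}=\cos\theta\,\omega_1+\sin\theta\,\omega_2$, etc.), and since Definition~\ref{hypo-contact-isomorphism} only asks for \emph{some} $\theta$, the residual ambiguity $\theta\mapsto-\theta$ is immaterial, as you note.
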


\begin{proof}
Let us consider equations \eqref{Azero-ecus-3} for $(a,r\not=0)$ in
terms of $e^1,\ldots,e^5$, and equations \eqref{Azero-ecus-4} for
$(a,b=r\not=0)$ in terms of $f^1,\ldots,f^5$. Then,
\eqref{24-25-bis} defines an isomorphism of Lie algebras such that
$e^5=f^5$, $e^{14}+e^{23}=f^{14}+f^{23}$ and
$$
e^{12}+e^{34}=\cos(-\theta)(f^{12}+f^{34})
-\sin(-\theta)(f^{13}+f^{42}),\qquad e^{13}+e^{42}=\sin(-\theta)
(f^{12}+f^{34}) +\cos(-\theta) (f^{13}+f^{42}).
$$
This shows that any hypo-contact structure in family
\eqref{Azero-ecus-3} is equivalent by rotation to the hypo-contact
structure in the family \eqref{Azero-ecus-4} for $(a,b=r\not=0)$.

Let us consider equations \eqref{Azero-ecus-7} for $(a,r\not=0)$ in
terms of $f^1,\ldots,f^5$, and equations \eqref{Azero-ecus-5} for
$s=(a^2+r^2)/r$ in terms of $e^1,\ldots,e^5$. Then, \eqref{26-27}
defines an isomorphism of Lie algebras such that $f^5=e^5$,
$f^{14}+f^{23}=e^{14}+e^{23}$ and
$$
f^{12}+f^{34}=\cos\theta(e^{12}+e^{34})
-\sin\theta(e^{13}+e^{42}),\quad \quad \
f^{13}+f^{42}=\sin\theta(e^{12}+e^{34}) +\cos\theta(e^{13}+e^{42}).
$$
This shows that any hypo-contact structure in family
\eqref{Azero-ecus-7} is equivalent by rotation to a hypo-contact
structure in the family \eqref{Azero-ecus-5}.
\end{proof}

\begin{remark}\label{no-iso}
A direct calculation shows that any equivalence by rotation between
hypo-contact structures in the families \eqref{Azero-ecus-3} and
\eqref{Azero-ecus-4} must have $\theta\not=0$. The same holds for
any equivalence by rotation between hypo-contact structures in the
families \eqref{Azero-ecus-5} and \eqref{Azero-ecus-7}.
\end{remark}

\section{$K$-contact and $\eta$-Einstein structures} \label{Sasak-eta-Einstein}

The Lie algebras described in Proposition \ref{8families} cannot be
Einstein \cite{D}. In this section, we show that any Lie algebra of
\eqref{Azero-ecus-2} and the Lie algebra of \eqref{Azero-ecus-4} for
$a=b=0$ with the hypo-contact structure defined by \eqref{structure}
are the only ones which are $\eta$-Einstein \cite{Ok} or,
equivalently \cite{ConS}, Sasakian. Moreover, we prove that these
Lie algebras are also $K$-contact \cite{Blair}.

Consider an odd-dimensional manifold $M$ with a contact form $\eta$
and associated metric $g$. Denote by $\xi$ the vector field on $M$
dual of $\eta$. Recall that $(M,g,\eta)$ is said to be
\emph{$K$-contact} if $\xi$ is a Killing vector field; $(M,g,\eta)$
is called \emph{$\eta$-Einstein} \cite{Ok} if there exist smooth
functions $\tau,\nu$ on $M$ such that the Ricci curvature tensor
satisfies
$$
{\mbox {Ric}} (X, Y) = \tau\, g (X, Y) + \nu\, \eta (X) \eta (Y),
$$
for any vector fields $X, Y$ on $M$. The functions $\tau$ and $\nu$
are uniquely determined by
$$
s = \tau\, \dim M + \nu, \quad {\mbox Ric} (\xi, \xi) = \tau + \nu,
$$
where $s$ denotes the scalar curvature of $g$. When $\nu=0$ we have
the well-known Einstein condition. In our situation, $\tau$ and
$\nu$ are constant and $\xi$ is the vector dual to $\eta=e^5$.

In the following proposition we distinguish the solvable Lie
algebras of Proposition \ref{8families} for which the hypo-contact
structure defined by \eqref{structure} is $K$-contact.

\begin{proposition}\label{k-contact}
Let $\frg$ be a solvable Lie algebra with a $K$-contact hypo
structure $(\eta, \omega_1,\omega_2,\omega_3)$ and with a basis
$e^1,\ldots,e^5$ for ${\frak g}^*$ which satisfies the conditions of
Proposition \ref{red-equations}. Then, its structure equations
reduce to \eqref{Azero-ecus-2} or \eqref{Azero-ecus-4} for $a=b=0$.
Moreover, they are Sasakian and $\eta$-Einstein.
\end{proposition}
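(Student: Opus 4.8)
The plan is to treat the classification and the ``moreover'' separately. For the classification, note that under \eqref{structure} the metric $g$ attached to the $SU(2)$-structure is exactly the one making $E_1,\dots,E_5$ orthonormal, and that the Reeb field $\xi$ dual to $\eta=e^5$ is $E_5$. On a Lie group with a left-invariant metric, a left-invariant field is Killing precisely when its adjoint action is $g$-skew-symmetric, so the structure is $K$-contact if and only if $\mathrm{ad}_{E_5}$ is skew-symmetric. Its matrix is read off directly from the structure equations: since $[E_5,E_j]=\sum_i c^i_{j5}E_i$, where $c^i_{j5}$ is the coefficient of $e^{j5}$ in $de^i$, skew-symmetry amounts to $c^i_{j5}=-c^j_{i5}$ for all $i,j$. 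At the level of \eqref{differentials} this forces $B_{15}=B_{25}=C_{15}=D_{15}=0$ and $B_{35}+C_{25}=0$.

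Since the changes of basis used in Proposition~\ref{8families} preserve $\eta=e^5$, the forms $\omega_i$ and hence $g$ and $\xi$, I would run this skew-symmetry test directly on each of the six families \eqref{Azero-ecus-1}--\eqref{Azero-ecus-7}. Reading off the $e^{j5}$-coefficients, \eqref{Azero-ecus-1} and \eqref{Azero-ecus-5} produce an off-diagonal entry whose transpose vanishes; in \eqref{Azero-ecus-3} and \eqref{Azero-ecus-7} a diagonal entry proportional to $a$ must first vanish, forcing $a=0$, after which an off-diagonal entry proportional to $r^2$ again has no matching transpose (here $r\neq0$). Thus none of these four is $K$-contact. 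For \eqref{Azero-ecus-4} the only nonzero entries are $c^3_{25}=c^4_{15}=-(a^2+b^2)$, so skew-symmetry holds exactly when $a=b=0$; and \eqref{Azero-ecus-2} has $c^2_{35}=-c^3_{25}=3r^2$ with vanishing diagonal, so it is always $K$-contact. This gives the stated reduction.

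For the ``moreover'', by \cite[Theorem~14]{ConS} a hypo structure is Sasakian if and only if it is $\eta$-Einstein, so it is enough to establish one of the two. I would verify the Sasakian property from the forms. A direct computation gives, for \eqref{Azero-ecus-2},
$$ d\omega_1=-3r^2\,\eta\wedge\omega_2,\qquad d\omega_2=3r^2\,\eta\wedge\omega_1, $$
while \eqref{Azero-ecus-4} with $a=b=0$ gives $d\omega_1=d\omega_2=0$; in both cases $d\omega_3=0$ and $d\eta=-2\omega_3$. Hence in all cases $d\omega_1=-\lambda\,\eta\wedge\omega_2$ and $d\omega_2=\lambda\,\eta\wedge\omega_1$ for a constant $\lambda$ ($\lambda=3r^2$, resp.\ $\lambda=0$). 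These relations give $\mathcal{L}_\xi\omega_3=0$ and $d(\omega_1+i\omega_2)=i\lambda\,\eta\wedge(\omega_1+i\omega_2)$, so the Reeb field preserves the transverse metric and the transverse K\"ahler form $\omega_3$ while rotating the transverse $(2,0)$-form by a phase, and the purely transverse part of $d(\omega_1+i\omega_2)$ vanishes, forcing the transverse almost complex structure determined by $(\omega_1,\omega_2,\omega_3)$ to be integrable. This exhibits a $\xi$-invariant transverse K\"ahler structure, so the contact metric structure is Sasakian, and then $\eta$-Einstein by \cite[Theorem~14]{ConS}.

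The main obstacle is the second part, not the first: the skew-symmetry test is routine linear algebra once $\mathrm{ad}_{E_5}$ is identified, whereas the Sasakian conclusion requires care in arguing that the relations $d\omega_1=-\lambda\,\eta\wedge\omega_2$, $d\omega_2=\lambda\,\eta\wedge\omega_1$ genuinely encode integrability of the transverse complex structure (and not merely the $K$-contact condition already in hand), and that $\lambda$ is forced to be constant. A safer, if more computational, alternative is to compute the Ricci tensor of the left-invariant metric directly---using the torsion-form formula of \cite{BV} mentioned in the Introduction---verify the identity $\mathrm{Ric}=\tau\,g+\nu\,\eta\otimes\eta$ with constant $\tau,\nu$, and then obtain the Sasakian property from \cite[Theorem~14]{ConS}.
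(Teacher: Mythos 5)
Your proof is correct, and its classification half coincides with the paper's own argument: the paper's Killing condition \eqref{killing} is exactly your skew-symmetry test $c^i_{j5}+c^j_{i5}=0$ for $\mathrm{ad}_{E_5}$, run over the six families of Proposition~\ref{8families} (and the changes of basis used there do preserve $e^5$ and the $\omega_i$, as you note, so the test is legitimate in each family's basis). The paper is merely more economical: since $de^1=0$, the single entry $de^4(e_1,e_5)$ already rules out \eqref{Azero-ecus-1}, \eqref{Azero-ecus-3}, \eqref{Azero-ecus-5}, \eqref{Azero-ecus-7}, and \eqref{Azero-ecus-4} unless $a=b=0$, so your preliminary step of first forcing $a=0$ in \eqref{Azero-ecus-3} and \eqref{Azero-ecus-7} is not needed. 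Where you genuinely diverge is the ``moreover''. Both proofs verify the same identities $d\eta=-2\omega_3$, $d\omega_1=\lambda\,\omega_2\wedge\eta$, $d\omega_2=-\lambda\,\omega_1\wedge\eta$ (your $\lambda$ is the negative of the paper's, an irrelevant convention since $\omega_i\wedge\eta=\eta\wedge\omega_i$), but the paper then simply cites \cite{BGM} to conclude $\eta$-Einstein and invokes \cite[Theorem 14]{ConS} to get Sasakian, whereas you prove Sasakian directly ($K$-contact plus CR-integrability, the latter because $d(\omega_1+i\omega_2)=i\lambda\,\eta\wedge(\omega_1+i\omega_2)$ has no purely transverse component, hence no transverse $(1,2)$-part, which is precisely the obstruction to $[T^{0,1},T^{0,1}]\subset T^{0,1}$, the $\xi$-component being controlled by $d\eta$ of type $(1,1)$) and then obtain $\eta$-Einstein from the same Theorem 14. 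Your route is sound: the integrability claim is correct, and ``$K$-contact $+$ CR-integrable $\Rightarrow$ Sasakian'' is standard (Tanno's formula; cf.\ \cite{Blair}); moreover your worry about the constancy of $\lambda$ evaporates in this setting, since all forms are left-invariant and $\lambda$ is literally the constant $3r^2$ or $0$. The trade-off is that the paper's appeal to \cite{BGM} is shorter, while your argument is more self-contained; and your ``safer alternative'' of computing the Ricci tensor directly is in effect what the paper does anyway in Proposition~\ref{families-eta-Einstein}.
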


\begin{proof}
Denote by $e_1,\ldots,e_5$ the dual basis of $e^1,\ldots,e^5$. The
vector $e_5$ is a Killing vector if and only if $g (\nabla_Y e_5, Z)
+ g(\nabla_Z e_5, Y) =0$ for any $Y, Z \in {\mathfrak g}$ or,
equivalently, $g ([e_5, e_i], e_j) + g([e_5, e_j], e_i) =0$, for any
$i,j$. Since the basis $e_1,\ldots,e_5$ is orthonormal, the latter
condition is equivalent to
\begin{equation}\label{killing}
de^j(e_i, e_5) + de^i(e_j,e_5) =0,
\end{equation}
for any $i,j$. In particular, for $(i,j)=(1,4)$, since $e^1$ is
closed, from Proposition~\ref{8families} we get that $de^4(e_1,
e_5)\not=0$ for the families \eqref{Azero-ecus-1},
\eqref{Azero-ecus-3}, \eqref{Azero-ecus-4} unless $a=b=0$,
\eqref{Azero-ecus-5} and \eqref{Azero-ecus-7}. Therefore, these
families are not $K$-contact.

On the other hand, it is clear that \eqref{Azero-ecus-2} and
\eqref{Azero-ecus-4} for $a=b=0$ satisfy \eqref{killing}. Moreover,
the equations
$$
d\eta=-2\omega_3,\quad d \omega_1= \lambda\,
\omega_2\wedge\eta,\quad d\omega_2=-\lambda\, \omega_1\wedge\eta,
$$
are satisfied for these families, where $\lambda = - 3 r^2$ for the
family \eqref{Azero-ecus-2}, and $\lambda=0$ for
\eqref{Azero-ecus-4} with $a=b=0$. Therefore, $(\omega_i, g)$ is
$\eta$-Einstein (see \cite{BGM}). But Theorem $14$ of \cite{ConS}
asserts that a hypo structure is Sasakian if and only if it is
$\eta$-Einstein, which completes the proof.
\end{proof}

\medskip

Furthermore, concerning the $\eta$-Einstein property we have:

\begin{proposition}\label{families-eta-Einstein}
The families \eqref{Azero-ecus-2} and \eqref{Azero-ecus-4} for
$a=b=0$ with the hypo-contact structure defined by~\eqref{structure}
are the only ones which are $\eta$-Einstein.
\end{proposition}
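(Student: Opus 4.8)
The plan is to deduce the statement directly from the characterisation of $K$-contact families already obtained in Proposition~\ref{k-contact}, by routing the $\eta$-Einstein condition through Sasakian geometry rather than recomputing any curvature. The key observation is that, for a hypo structure, being $\eta$-Einstein is a very rigid condition: by Theorem~$14$ of \cite{ConS} it is equivalent to being Sasakian, and every Sasakian structure is in particular $K$-contact, since its Reeb vector field $\xi$ (here the vector dual to $\eta=e^5$) is automatically Killing. This single implication is what does all the work, because it lets me import the already-established $K$-contact classification.

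With this in hand the argument is short, and I would run it as follows. First I would assume that one of the six families \eqref{Azero-ecus-1}--\eqref{Azero-ecus-7}, endowed with the hypo-contact structure \eqref{structure}, is $\eta$-Einstein. By the equivalence above it is then Sasakian, hence $K$-contact. But Proposition~\ref{k-contact} shows that, among the families of Proposition~\ref{8families}, the only $K$-contact ones are \eqref{Azero-ecus-2} and \eqref{Azero-ecus-4} with $a=b=0$: for every other family the Killing equation~\eqref{killing} already fails on the pair $(e_1,e_5)$, since $e^1$ is closed while $de^4(e_1,e_5)\neq 0$. Consequently no family outside this list can be $\eta$-Einstein. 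Conversely, Proposition~\ref{k-contact} establishes that these two families are Sasakian, hence $\eta$-Einstein, so the characterisation holds in both directions and the proposition follows.

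I do not expect a genuine obstacle here, precisely because the substantial computation — testing the Killing condition family by family — was already carried out in Proposition~\ref{k-contact}. The only point that deserves care is to invoke the implication in the correct direction: $\eta$-Einstein does not by itself constrain the metric structure, but the passage $\eta$-Einstein $\Rightarrow$ Sasakian $\Rightarrow$ $K$-contact (the first step via \cite{ConS}) does, and it is exactly this detour that lets us avoid verifying ${\mbox {Ric}} (X,Y) = \tau\, g (X, Y) + \nu\, \eta (X)\eta (Y)$ for each left-invariant metric. A fully self-contained alternative would be to compute the Ricci curvature of the metric \eqref{structure} explicitly for all six families and impose the $\eta$-Einstein identity directly; this is feasible but heavy, and the present route circumvents it entirely.
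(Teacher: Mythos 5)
Your proof is correct, but it takes a genuinely different route from the paper's. The paper proves this proposition by direct computation: it first invokes Proposition~\ref{isomorphic-hypo} together with the observation that the $\eta$-Einstein condition is preserved under equivalence by rotation, so as to reduce to the four families \eqref{Azero-ecus-1}, \eqref{Azero-ecus-2}, \eqref{Azero-ecus-4} and \eqref{Azero-ecus-5}, and then computes the Ricci tensor of each of these explicitly, checking which ones satisfy ${\mbox {Ric}}(X,Y) = \tau\, g(X,Y) + \nu\, \eta(X)\eta(Y)$; this yields in passing the explicit constants, e.g.\ ${\mbox {Ric}} = -2(1+3r^2)\, g + 6(1+r^2)\, \eta\otimes\eta$ for \eqref{Azero-ecus-2}. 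Your detour --- $\eta$-Einstein $\Rightarrow$ Sasakian (Theorem~14 of \cite{ConS}, used in exactly the direction in which the paper itself uses it in the proof of Proposition~\ref{k-contact}) $\Rightarrow$ $K$-contact (the Reeb field of a Sasakian structure is Killing for the associated metric, which here is the metric making $e_1,\ldots,e_5$ orthonormal, cf.\ \cite{Blair}) $\Rightarrow$ family \eqref{Azero-ecus-2} or \eqref{Azero-ecus-4} with $a=b=0$ (Proposition~\ref{k-contact}) --- avoids all curvature computation and even makes the reduction by rotation unnecessary, since the proof of Proposition~\ref{k-contact} already excludes the remaining families via the single Killing-equation component $de^4(e_1,e_5)\neq 0$; the converse inclusion is likewise contained in Proposition~\ref{k-contact}, which shows those two families are $\eta$-Einstein. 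What the paper's computation buys that yours does not: it is self-contained (independent of the equivalence from \cite{ConS} and of the Sasakian-implies-$K$-contact fact), it records the explicit Ricci tensors and the constants $(\tau,\nu)$, and it turns the implication \lq\lq $\eta$-Einstein $\Rightarrow$ $K$-contact\rq\rq\ for these families into a verified conclusion (the content of the Remark following the proposition) rather than an input. What your route buys is economy: the only family-by-family verification needed is the one already carried out for Proposition~\ref{k-contact}.
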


\begin{proof}
First, notice that the $\eta$-Einstein condition is preserved under
equivalence by rotation. Therefore, from
Proposition~\ref{isomorphic-hypo} it suffices to study the families
\eqref{Azero-ecus-1}, \eqref{Azero-ecus-2}, \eqref{Azero-ecus-4} and
\eqref{Azero-ecus-5}. By direct computation one can check that the
nonzero components of the Ricci tensor for these four families are
given respectively by
$$
\begin{array}{l}
{\mbox {Ric}} (e_i, e_i) = \left\{
\begin{array}{l}
-\frac12\bigl(9r^4+18r^2+4\bigr), \quad i = 1,\\[5pt]
- (3 r^2+2), \quad  i = 2,3,\\[5pt]
\frac 12\bigl(9r^4+6r^2-4\bigr), \quad i=4,\\[5pt]
-\frac12\bigl(9r^4-8\bigr),\quad  i = 5.
\end{array} \right.\\[40pt]

{\mbox {Ric}} (e_i, e_i) = \left\{
\begin{array}{l}
-2(3 r^2+1), \quad i = 1, \ldots, 4,\\[5pt]
4, \quad  i = 5.
\end{array} \right.\\[20pt]

{\mbox {Ric}} (e_i, e_i ) = \left\{
\begin{array}{l}
-\frac 12\bigl((a^2+b^2)^2+8(a^2+b^2)+4\bigr),\quad i=1,2,\\[5pt]
\frac 12\left((a^2+b^2)^2-4\right),\quad i=3,4,\\[5pt]
4-(a^2+b^2)^2,\quad i=5.
 \end{array} \right.

\end{array}
$$
$$
{\mbox {Ric}} (e_i, e_i) = \left\{
\begin{array}{l}
- \frac 18 \bigl(r^4+16 r^2+16\bigr), \quad i = 1, 3,\\[5pt]
\frac 18\left(r^4-16\right), \quad i = 2, 4,\\[5pt]
-\frac 14\left(  r^4-16\right), \quad i = 5.
\end{array} \right.
$$
Therefore, $(\eta=e^5, g)$ is $\eta$-Einstein only for the families
\eqref{Azero-ecus-2} and \eqref{Azero-ecus-4} for $a=b=0$. Notice
that
$$
{\mbox {Ric}}(X,Y)= -2(1+3r^2) g (X, Y) + 6(1+r^2) \eta(X) \eta(Y)
$$
for the family \eqref{Azero-ecus-2}, and
$$
{\mbox {Ric}} (X, Y) = -2 g (X, Y) + 6 \eta (X) \eta (Y)
$$
for \eqref{Azero-ecus-4} with $a=b=0$.
\end{proof}

\begin{remark}
We must notice that for the families \eqref{Azero-ecus-2} and
\eqref{Azero-ecus-4} with $a=b=0$, $\eta$-Einstein condition implies
$K$-contact property, which in general  is not true.
\end{remark}

As a consequence of the above propositions and
Theorem~\ref{clasification} we conclude

\begin{corollary}\label{clasif-Kcontact-eta-Einstein}
The only 5-dimensional solvable Lie algebras admitting a $K$-contact
hypo structure or a hypo-contact $\eta$-Einstein structure are
$\frh_1$ and $\frh_3$.
\end{corollary}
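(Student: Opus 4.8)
The plan is to assemble the statement directly from Theorem~\ref{clasification} together with Propositions~\ref{k-contact} and~\ref{families-eta-Einstein}, so that essentially no new computation is required; the only work is the bookkeeping between the six families of Proposition~\ref{8families} and the five algebras $\frh_1,\dots,\frh_5$.

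First I would note that both a $K$-contact hypo structure and a hypo-contact $\eta$-Einstein structure are in particular hypo-contact structures, so by Theorem~\ref{clasification} any solvable Lie algebra carrying either is isomorphic to one of $\frh_1,\dots,\frh_5$. It therefore suffices to decide which of these five admit each type of structure, and for this I would pass to an adapted basis as in Proposition~\ref{red-equations}, placing the structure equations into one of the families \eqref{Azero-ecus-1}--\eqref{Azero-ecus-7} carrying the canonical forms \eqref{structure}.

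For the $K$-contact case I would invoke Proposition~\ref{k-contact}: on a solvable Lie algebra a $K$-contact hypo structure forces the equations into \eqref{Azero-ecus-2} or \eqref{Azero-ecus-4} with $a=b=0$, and conversely these two families are $K$-contact (indeed Sasakian and $\eta$-Einstein). For the $\eta$-Einstein case I would invoke Proposition~\ref{families-eta-Einstein}, which singles out exactly the same two families. Since by Proposition~\ref{isomorphic-hypo} the families \eqref{Azero-ecus-3} and \eqref{Azero-ecus-7} are equivalent by rotation to \eqref{Azero-ecus-4} and \eqref{Azero-ecus-5}, and both properties are preserved under rotation, the families \eqref{Azero-ecus-1}, \eqref{Azero-ecus-2}, \eqref{Azero-ecus-4}, \eqref{Azero-ecus-5} already exhaust all hypo-contact structures up to isomorphism and rotation, so the two characterisations are genuinely complete rather than tied to one representative structure.

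Finally I would read off the answer from the proof of Theorem~\ref{clasification}: family \eqref{Azero-ecus-2} is isomorphic to $\frh_3$, while \eqref{Azero-ecus-4} with $a=b=0$ is isomorphic to $\frh_1$. Hence in either case the admissible algebras are precisely $\frh_1$ and $\frh_3$, the agreement of the two characterisations being guaranteed by Theorem~$14$ of \cite{ConS} (Sasakian $\Leftrightarrow$ $\eta$-Einstein) together with Proposition~\ref{k-contact}. There is no genuine obstacle here; the only subtlety worth flagging is that imposing the canonical structure \eqref{structure} on each family must capture every hypo-contact structure on the underlying algebra, which is exactly what Propositions~\ref{8families} and~\ref{isomorphic-hypo} secure.
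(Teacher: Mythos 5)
Your proposal is correct and follows essentially the same route as the paper, which states this corollary as an immediate consequence of Theorem~\ref{clasification}, Propositions~\ref{k-contact} and~\ref{families-eta-Einstein}, and the identifications (family \eqref{Azero-ecus-2} $\cong \frh_3$, family \eqref{Azero-ecus-4} with $a=b=0$ $\cong \frh_1$) made in the proof of Theorem~\ref{clasification}. Your extra remarks on rotation equivalence via Proposition~\ref{isomorphic-hypo} are exactly the completeness point the paper relies on inside the proof of Proposition~\ref{families-eta-Einstein}, so nothing is missing.
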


Hypo-contact structures are related to the contact Calabi-Yau
structures introduced recently in~\cite{TV}. A {\it contact
Calabi-Yau structure} on a ($2n+1$)-dimensional manifold $M$ is a
triple $(\eta,J,\epsilon)$, where $(\eta,J)$ is a Sasakian structure
and $\epsilon\in \Lambda^{n,0}_J(\ker \eta)$ is a nowhere vanishing
basic form on $\ker \eta$ such that $d\epsilon=0$ and
$\epsilon\wedge \bar{\epsilon}=(-1)^{n(n+1)\over 2} i^n (d\eta)^n$.
In dimension 5, if $(\eta,J,\epsilon)$ is a contact Calabi-Yau
structure then the quadruplet $(-\eta,\, \omega_1=\Re \epsilon,\,
\omega_2=\Im \epsilon,\, \omega_3=-{1\over 2} d\eta)$ defines a
hypo-contact structure on $M$ for which $d\omega_1=d\omega_2=0$ and
the metric induced by $(\eta,J)$ is $\eta$-Einstein with $\tau=-2$
and $\nu=6$~\cite[Corollary 3.7]{TV}..

Tomassini and Vezzoni classify 5-nilmanifolds admitting invariant
contact Calabi-Yau structure and prove that, up to isomorphism, the
only (non-trivial) 5-dimensional nilpotent Lie algebra admitting
hypo-contact structure is  $\frh_1$. This result  also follows
directly from the fact that $\frh_1$ is the only 5-dimensional
nilpotent Lie algebra admitting a  Sasakian
structure~\cite[Corollary 5.5]{U}. Next we show that there are no
5-dimensional solvable non-nilpotent Lie algebras admitting contact
Calabi-Yau structure.

\begin{proposition}\label{clasif-contact-Calabi-Yau}
Let $\frg$ be a 5-dimensional Lie algebra such that
$[\frg,\frg]\not= \frg$ admitting a contact Calabi-Yau structure.
Then, $\frg$ is isomorphic to  the nilpotent Lie algebra $\frh_1$.
\end{proposition}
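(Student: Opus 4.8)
The plan is to use the remark recorded just before the statement: in dimension five a contact Calabi-Yau structure induces the hypo-contact structure $(-\eta,\Re\epsilon,\Im\epsilon,-\frac{1}{2}d\eta)$, and since $d\epsilon=0$ this hypo-contact structure satisfies in addition $d\omega_1=d\omega_2=0$. Thus a contact Calabi-Yau structure on $\frg$ is in particular a hypo-contact structure with $\omega_1$ and $\omega_2$ closed, and I will show that the closedness of $\omega_1$ alone already forces $\frg\cong\frh_1$.

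Because $[\frg,\frg]\neq\frg$, Remark \ref{clasif-more-gen} allows me to apply Proposition \ref{red-equations}: there is a basis $e^1,\dots,e^5$ of $\frg^*$ with $\eta=e^5$, $\omega_1=e^{12}+e^{34}$, $\omega_2=e^{13}+e^{42}$, $\omega_3=e^{14}+e^{23}$ and differentials \eqref{differentials}, the reduced coefficients obeying \eqref{B15-B25-B35}, \eqref{C15-C25-D15} and the Jacobi identity. By the same Remark, the analysis of Proposition \ref{8families} shows that $\frg$ is one of the six families \eqref{Azero-ecus-1}--\eqref{Azero-ecus-7}, or else satisfies one of the three additional non-solvable parameter conditions listed there. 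Since these reductions preserve $\omega_1=e^{12}+e^{34}$, the extra condition to impose on all of them is simply $d(e^{12}+e^{34})=0$.

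The decisive computation is the coefficient of $e^{135}$ in $d\omega_1=d(e^{12}+e^{34})$. In this expansion only $-e^1\wedge de^2$ and $-e^3\wedge de^4$ can produce $e^{135}$, through the summands $B_{35}e^{35}$ and $D_{15}e^{15}$ of \eqref{differentials}, so that independently of $A$
\[
d\omega_1=(D_{15}-B_{35})\,e^{135}+\cdots .
\]
Hence $d\omega_1=0$ forces $D_{15}=B_{35}$. I would then insert the explicit values \eqref{B15-B25-B35} and \eqref{C15-C25-D15} of $B_{35}$ and $D_{15}$ into each case. For the family \eqref{Azero-ecus-4} one gets $D_{15}-B_{35}=-(a^2+b^2)$, which vanishes precisely when $a=b=0$, that is for $\frh_1$; for \eqref{Azero-ecus-1} and \eqref{Azero-ecus-2} one gets $-3r^2$, for \eqref{Azero-ecus-5} one gets $-r^2$, while \eqref{Azero-ecus-3} and \eqref{Azero-ecus-7} can be discarded directly or, via Proposition \ref{isomorphic-hypo}, since equivalence by rotation preserves the condition $d\omega_1=0$. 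For the three non-solvable cases the same substitution gives $-\frac{1}{2}r^2$, $-\frac{1}{2}(B_{34}^2+C_{13}^2+C_{14}^2)$, and a manifestly negative sum in $B_{12}^2,B_{13}^2,B_{14}^2,B_{34}^2$. In every case other than $\frh_1$ the nonvanishing hypotheses defining the case make $D_{15}-B_{35}$ strictly negative, so $d\omega_1\neq0$; hence $\frg\cong\frh_1$, on which indeed both $\omega_1$ and $\omega_2$ are closed.

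The main obstacle is exactly the non-solvable part of the classification. The tempting shortcut is to observe that a contact Calabi-Yau structure is Sasakian and $\eta$-Einstein and to quote Corollary \ref{clasif-Kcontact-eta-Einstein}; but that corollary is established only for solvable Lie algebras, and in fact some of the non-solvable cases above are themselves Sasakian and $\eta$-Einstein, with nonzero Sasakian constant $\lambda$. So the $\eta$-Einstein property by itself does not eliminate them, and one is forced to use the strictly stronger Calabi-Yau requirement $d\omega_1=d\omega_2=0$, i.e. $\lambda=0$. Verifying that the invariant $D_{15}-B_{35}$ remains strictly negative in those three cases---so that $\lambda=0$ singles out $\frh_1$---is the real content of the proof.
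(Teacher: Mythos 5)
Your proposal is correct and is essentially the paper's own proof: reduce the contact Calabi--Yau condition to a hypo-contact structure with $d\omega_1=d\omega_2=0$, then check the solvable families of Proposition~\ref{8families} and the non-solvable cases of Remark~\ref{clasif-more-gen}; your identification of the $e^{135}$-coefficient $D_{15}-B_{35}$ merely makes explicit the ``direct calculation'' the paper leaves unstated, and your evaluations ($-3r^2$, $-3r^2$, $-(a^2+b^2)$, $-r^2$ for the families \eqref{Azero-ecus-1}, \eqref{Azero-ecus-2}, \eqref{Azero-ecus-4}, \eqref{Azero-ecus-5}, and strictly negative quantities in the remaining solvable and non-solvable cases) are accurate. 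One small correction: equivalence by rotation preserves the pair condition $d\omega_1=d\omega_2=0$ but not $d\omega_1=0$ alone, since the rotation mixes $\omega_1$ and $\omega_2$; this does not damage your argument, both because the contact Calabi--Yau hypothesis gives closedness of the pair (to which Proposition~\ref{isomorphic-hypo} does apply) and because families \eqref{Azero-ecus-3} and \eqref{Azero-ecus-7} are in any case discarded by your direct computation, which gives $D_{15}-B_{35}=-a^2-2r^2$ and $-(r^2+a^2)(r^2+2a^2)/(2r^2)$ respectively, both nonzero since $r\neq 0$.
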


\begin{proof}
It is sufficient to prove that if $\frg$ admits a hypo-contact
structure $(\eta, \omega_i)$ with $\omega_1$ and $\omega_2$ closed,
then $\frg$ is isomorphic to $\frh_1$. If $\frg$ is solvable then
from Proposition~\ref{8families} one can see directly that
$\omega_1$ and $\omega_2$ are both closed only if $a=b=0$ in the
family~\eqref{Azero-ecus-4}, which corresponds to $\frh_1$. Finally,
when $\frg$ is not solvable we apply Remark~\ref{clasif-more-gen}
and a direct calculation shows that $\omega_1$ and $\omega_2$ are
also nonclosed in this case.
\end{proof}

\section{Metrics with holonomy $SU(3)$} \label{hypo-vol-eq}

The purpose of this section is to prove
Theorem~\ref{Calabi-Yau-metrics}, that is, any left-invariant
hypo-contact structure $(\eta, \omega_1, \omega_2, \omega_3)$ on a
solvable Lie group $N$  gives rise to a metric with holonomy $SU(3)$
via the Conti-Salamon evolution equations \eqref{vol}. From now on,
to avoid confusion, we denote the exterior differential on  $N$ by
$\hat d$, and the exterior differential on $N\times I$ by $d$. Then,
the (hypo) evolution equations \eqref{vol} are written as follows
\begin{equation} \label{vol-1}
\left\{ \begin{array} {l}
\partial_t \omega_3(t) =  - \hat d \eta(t),\\
\partial_t ( \omega_2(t) \wedge \eta(t)) =  \hat d \omega_1(t),\\
\partial_t ( \omega_1(t) \wedge \eta(t)) = -  \hat d \omega_2(t).\\
\end{array} \right.
\end{equation}

In order to prove Theorem~\ref{Calabi-Yau-metrics} we first observe
the following fact. Let $(\eta,\omega_i)$ and
$(\tilde\eta,\tilde\omega_i)$ be two hypo-contact structures on a
Lie algebra $\frg$ which are equivalent by rotation in the sense of
Definition~\ref{hypo-contact-isomorphism}. If
$(\tilde\eta(t),\tilde\omega_i(t))$ is a solution of the evolution
equations~\eqref{vol-1} for $(\tilde\eta,\tilde\omega_i)$, then
$\eta(t)=F^*(\tilde\eta(t))$, $\omega_3(t)=F^*(\tilde\omega_3(t))$,
$\omega_1(t)=\cos\theta\, F^*(\tilde\omega_1(t)) -\sin\theta\,
F^*(\tilde\omega_2(t))$ and $\omega_2=\sin\theta\,
F^*(\tilde\omega_1(t)) +\cos\theta\, F^*(\tilde\omega_2(t))$ is a
solution of~\eqref{vol-1} for the hypo-contact structure
$(\eta,\omega_i)$. Therefore, it suffices to prove the theorem up to
equivalence by rotation of the hypo-contact structure.

\bigskip
\noindent{\bf Proof of Theorem~\ref{Calabi-Yau-metrics}~:} From the
observation above, Theorem~\ref{clasification} and
Propositions~\ref{8families} and~\ref{isomorphic-hypo} we shall
concentrate on the families \eqref{Azero-ecus-1},
\eqref{Azero-ecus-2}, \eqref{Azero-ecus-4} and \eqref{Azero-ecus-5},
showing for each case the existence of a solution of the evolution
equations for which the metric associated to the corresponding
integrable $SU(3)$-structure has holonomy group equal to $SU(3)$.

We consider first the $\eta$-Einstein case in detail. In this case
we have that $\hat d \omega_1 = \lambda  \, \omega_2 \wedge e^5$ and
$\hat d \omega_2 =  - \lambda\,  \omega_1 \wedge e^5$, where
$\lambda = - 3 r^2$ for the family~\eqref{Azero-ecus-2} and $\lambda
=0$ for the nilpotent Lie algebra corresponding
to~\eqref{Azero-ecus-4} for $a=b=0$. A solution of the hypo
evolution equations is given by
$$
\eta(t) = \frac{1} {2} f'(t) \, e^5,\qquad \omega_1 (t) = f(t) \,
(e^{12}+e^{34}), \qquad\omega_2 (t) = f(t) \, (e^{13}-e^{24}),
\qquad\omega_3 (t) = f(t) \, (e^{14}+e^{23}),
$$
where $f(t)$ is a function such that $f(0) = 1, f'(0) = 2$ and
satisfies the ordinary differential equation
$$
f f'' + (f')^2 - 2 \lambda f =0.
$$
For $\lambda =0$ one has the explicit solution $f(t)=(1+4t)^{1/2}$
and the Riemannian metric with $SU(3)$-holonomy that one gets is the
one obtained in \cite{GLPS}, namely
 $$
g = (1+4t)^{1/2} \bigl( (e^1)^2 + (e^2)^2 + (e^3)^2 + (e^4)^2\bigr)
+ \frac{1}{1+4t}  (e^5)^2 + dt^2.
 $$

If $\lambda = - 3 r^2 <0$ then, after performing a first
integration, one obtains the first order differential equation
$$
f'(t)=\frac{2}{f(t)} \left(1+r^2-r^2f^3(t)\right)^{1/4}
$$
with initial condition $f(0) = 1$. Therefore, there exists a unique
solution $f(t)$ defined on some open neighbourhood $I$ around $t=0$.
The basis of 1-forms on the manifold $H_3\times I$ given by
$$
\eta^1=\sqrt{f(t)}\, e^1,\quad \eta^2=\sqrt{f(t)}\, e^2,\quad
\eta^3=\sqrt{f(t)}\, e^3,\quad \eta^4=\sqrt{f(t)}\, e^4,\quad
\eta^5=\frac{f'(t)}{2}\, e^5,\quad \eta^6=dt,
$$
is orthonormal with respect to the Riemnannian metric associated to
the corresponding integrable $SU(3)$-structure on $H_3\times I$. By
computing the curvature forms $\Omega^i_j$ and applying the
Ambrose-Singer theorem, one can see that the holonomy group is
actually $SU(3)$. In fact, a direct calculation shows that,  for
each $r$,  the curvature forms
$$
\begin{array} {rcl}
\Omega^1_2 \!\!&\!\!\!=\!\!\!&\!\!  -\Omega^3_4 = -\frac{4r^2f(t)+(f'(t))^2}{4f^2(t)} (\eta^{12} - \eta^{34}),\\[7pt]
\Omega^1_3 \!\!&\!\!\!=\!\!\!&\!\!  \Omega^2_4 = -\frac{4r^2f(t)+(f'(t))^2}{4f^2(t)} (\eta^{13} + \eta^{24}),\\[7pt]
\Omega^1_4 \!\!&\!\!\!=\!\!\!&\!\!
-\frac{4r^2f(t)+(f'(t))^2}{2f^2(t)} (2\eta^{14} + \eta^{23})
+ \frac{(f'(t))^2-2f(t)f''(t)}{2f^2(t)} \, \eta^{56},\\[7pt]
\Omega^1_5 \!\!&\!\!\!=\!\!\!&\!\! \Omega^4_6 = \frac{(f'(t))^2-2f(t)f''(t)}{4f^2(t)} (\eta^{15} + \eta^{46}),\\[7pt]
\Omega^1_6 \!\!&\!\!\!=\!\!\!&\!\! -\Omega^4_5 =
\frac{(f'(t))^2-2f(t)f''(t)}{4f^2(t)} (\eta^{16} - \eta^{45}),\\[7pt]
\Omega^2_3 \!\!&\!\!\!=\!\!\!&\!\!
-\frac{4r^2f(t)+(f'(t))^2}{2f^2(t)} ( \eta^{14} + 2 \eta^{23}) +
\frac{(f'(t))^2-2f(t)f''(t)}{2f^2(t)}\,
\eta^{56},\\[7pt]
\Omega^2_5 \!\!&\!\!\!=\!\!\!&\!\! \Omega^3_6 = \frac{(f'(t))^2-2f(t)f''(t)}{4f^2(t)} (\eta^{25} + \eta^{36}),\\[7pt]
\Omega^2_6 \!\!&\!\!\!=\!\!\!&\!\! -\Omega^3_5 =
\frac{(f'(t))^2-2f(t)f''(t)}{4f^2(t)} (\eta^{26} -
\eta^{35}),\\[7pt]
%%\Omega^5_6 \!\!&\!\!\!=\!\!\!&\!\! -\Omega^1_4-\Omega^2_3,
\end{array}
$$
are linearly independent at $t=0$, since  $f(0)=1$, $f'(0)=2$,
$f''(0)=-2(3r^2+2)$ and $f'''(0)=24(r^2+1)$.  Therefore, any
$\eta$-Einstein hypo-contact structure gives rise to a metric whose
holonomy group is equal to $SU(3)$.

For the family \eqref{Azero-ecus-4} a solution for the hypo
evolution equations is given by
$$
\begin{array} {lll}
\eta(t) \!\!&\!\!\!= \frac 12 f'(t) e^5, \quad\quad\quad &
\omega_1 (t) = \frac 12 f^2(t) f'(t) \,  e^{12} + \frac 2{f'(t)} \,  e^{34},\\[6pt]
\omega_2(t) \!\!&\!\!\!=   f(t) \, (e^{13} - e^{24}),
\quad\quad\quad & \omega_3 (t) =  f(t) \, (e^{14} + e^{23}),
\end{array}
$$
where $f(t)$ satisfies the differential equation
$$
4\rho +{f'}^3+f f'f'' =0
$$
with initial conditions $f(0) = 1$ and $f'(0)=2$, where
$\rho=a^2+b^2$. After performing a first integration, one obtains
the first order differential equation
$$
f'(t)=\frac{\left(8+ 4\rho - 4\rho f^3(t)\right)^{1/3}}{f(t)}
$$
with initial conditions $f(0) = 1$. Therefore, there exists a unique
solution $f(t)$ defined on some open neighbourhood $I$ around $t=0$.

A similar computation as in the $\eta$-Einstein case above, shows
that the holonomy group of the metric associated to the
corresponding integrable $SU(3)$-structure on $H_4\times I$ is also
equal to $SU(3)$. In fact, the curvature forms $\Omega^1_2,
\Omega^1_3, \Omega^1_4, \Omega^1_5, \Omega^1_6, \Omega^2_3,
\Omega^2_5$ and $\Omega^2_6$ take the following values when $t=0$:
$$
\begin{array} {rcl}
(\Omega^1_2)_{\mid_{t=0}} \!\!&\!\!\!=\!\!\!&\!\!
-\frac{(\rho-2)^2}{4}
(e^{12} - e^{34}),\\[7pt]
(\Omega^1_3)_{\mid_{t=0}} \!\!&\!\!\!=\!\!\!&\!\!
\frac{(\rho-2)^2-8}{4} (e^{13} + e^{24}) +
b \rho (e^{15}+e^{46}) + a \rho ( e^{25} +  e^{36}) ,\\[7pt]
(\Omega^1_4)_{\mid_{t=0}} \!\!&\!\!\!=\!\!\!&\!\!
\frac{(\rho-2)^2-12}{2} e^{14}
-a \rho( e^{15} + e^{46})- 2 e^{23} + b \rho (e^{25} + e^{36})- \frac{(\rho+2)(\rho-6)}{2} e^{56},\\[7pt]
(\Omega^1_5)_{\mid_{t=0}} \!\!&\!\!\!=\!\!\!&\!\! b\rho(
e^{13}+e^{24}) - a \rho (e^{14} -  e^{23})
- \frac{(\rho+2)(5\rho-6)}{4} (e^{15} + e^{46}), \\[7pt]
(\Omega^1_6)_{\mid_{t=0}} \!\!&\!\!\!=\!\!\!&\!\!
\frac{3(\rho+2)^2}{4} (e^{16} - e^{45}),\\[7pt]
(\Omega^2_3)_{\mid_{t=0}} \!\!&\!\!\!=\!\!\!&\!\! -2 e^{14} + a
\rho(e^{15}+ e^{46}) + \frac{(\rho-2)^2-12}{2} e^{23} -b\rho( e^{25}
+ e^{36})  - \frac{(\rho+2)(\rho-6)}{2}
e^{56},\\[7pt]
(\Omega^2_5)_{\mid_{t=0}} \!\!&\!\!\!=\!\!\!&\!\! a \rho( e^{13}+
e^{24}) + b \rho (e^{14} -  e^{23})
-\frac{(\rho+2)(5\rho-6)}{4} (e^{25} + e^{36}),\\[7pt]
(\Omega^2_6)_{\mid_{t=0}} \!\!&\!\!\!=\!\!\!&\!\!
\frac{3(\rho+2)^2}{4} (e^{26} - e^{35}),
\end{array}
$$
where $e^6$ denotes the 1-form $dt$ evaluated at $t=0$, and they are
linearly independent if and only if $\rho\not= 2,6$. Moreover, if
$\rho=a^2+b^2=2$ then $(\nabla_{\!\frac{\partial}{\partial t}}
\nabla_{\!\frac{\partial}{\partial t}} \Omega^1_2)_{\mid_{t=0}} =
-288\, (e^{12} - e^{34})$ and
$$
\begin{array} {rcl}
(\nabla_{\!\frac{\partial}{\partial t}}
 \Omega^1_5)_{\mid_{t=0}} \!\!&\!\!\!=\!\!\!&\!\! 12\, b\,(e^{13}+e^{24}) - 12\, a\, (e^{14}
 - e^{23}) -96\,(e^{15}+e^{46}),\\[8pt]
 (\nabla_{\!\frac{\partial}{\partial t}}
 \Omega^2_5)_{\mid_{t=0}} \!\!&\!\!\!=\!\!\!&\!\! 12\, a\, (e^{13}+e^{24}) + 12\, b\,
 (e^{14}-e^{23})-96\,(e^{25}+e^{36}),
 \end{array}
 $$
which implies that $\nabla_{\!\frac{\partial}{\partial t}}
\nabla_{\!\frac{\partial}{\partial t}} \Omega^1_2, \Omega^1_3,
\Omega^1_4, \nabla_{\!\frac{\partial}{\partial t}} \Omega^1_5,
\Omega^1_6, \Omega^2_3, \nabla_{\!\frac{\partial}{\partial t}}
\Omega^2_5$ and $\Omega^2_6$ are linearly independent at $t=0$. For
the remaining case $\rho=a^2+b^2=6$, since
$$
(\nabla_{\!\frac{\partial}{\partial t}} \Omega^1_4)_{\mid_{t=0}} =
536\, e^{14} -96\, a\, (e^{15}+e^{46}) + 40\, e^{23} + 96\, b\,
(e^{25}+ e^{36})  - 576\, e^{56},
$$
we have that the forms $\Omega^1_2, \Omega^1_3,
\nabla_{\!\frac{\partial}{\partial t}} \Omega^1_4, \Omega^1_5,
\Omega^1_6, \Omega^2_3, \Omega^2_5$ and $\Omega^2_6$ are independent
at $t=0$. Therefore, any left-invariant hypo-contact structure on
the Lie group $H_4$ gives rise to a metric with holonomy $SU(3)$.

For the family \eqref{Azero-ecus-5} a solution of the hypo evolution
equations is given by
$$
\begin{array} {lll}
\eta(t) \!\!&\!\!\!= \frac 12 f'(t) e^5, \quad\quad\quad &
\omega_1 (t) = f(t) \,  (e^{12} +   e^{34}),\\[6pt]
\omega_2(t) \!\!&\!\!\!= \frac 12  f^2(t) f'(t)\,  e^{13} - \frac
2{f'(t)} e^{24},\quad\quad\quad & \omega_3 (t) = f(t) \, (e^{14} +
e^{23}),
\end{array}
$$
where $f(t)$ satisfies the differential equation
$$
2 r^2 +{f'}^3+ f f' f'' =0
$$
with initial conditions $f(0) = 1$ and $f'(0)=2$. After performing a
first integration, one obtains the first order differential equation
$$
f'(t)=\frac{\left(8+2r^2-2r^2f^3(t)\right)^{1/3}}{f(t)}
$$
with initial conditions $f(0) = 1$, so there exists a unique
solution $f(t)$ defined on some open neighbourhood $I$ around $t=0$.
A similar computation as above shows that the holonomy group of the
metric associated to the corresponding integrable $SU(3)$-structure
on $H_5\times I$ is again equal to $SU(3)$. In fact$$
\begin{array} {lll}
(\Omega^1_2)_{\mid_{t=0}} \!\!&\!\!\!= \frac{(r^2+4)(r^2-4)}{16}
(e^{12} - e^{34}),
\quad\quad\quad  &(\Omega^1_3)_{\mid_{t=0}} = - \frac{(r^2+4)^2}{16} (e^{13} + e^{24}) ,\\[7pt]
(\Omega^1_4)_{\mid_{t=0}} \!\!&\!\!\!= \frac{r^2+4}{2} (
\frac{r^2-8}{4} e^{14} - e^{23} - \frac{r^2-12}{4} e^{56}),
\quad\quad\quad& (\Omega^1_5)_{\mid_{t=0}} = - \frac{(r^2+4)(5r^2-12)}{16} (e^{15} + e^{46})  \\[7pt]
(\Omega^1_6)_{\mid_{t=0}} \!\!&\!\!\!= \frac{3(r^2+4)^2}{16} (e^{16}
- e^{45}), \quad\quad\quad& (\Omega^2_3)_{\mid_{t=0}} = -
\frac{r^2+4}{2} ( e^{14} - \frac{r^2-8}{4}
e^{23} + \frac{r^2-12}{4} e^{56}),\\[7pt]
(\Omega^2_5)_{\mid_{t=0}} \!\!&\!\!\!= \frac{3(r^2+4)^2}{16} (e^{25}
+ e^{36}), \quad\quad\quad& (\Omega^2_6)_{\mid_{t=0}} = -
\frac{(r^2+4)(5r^2-12)}{16} (e^{26} - e^{35}),
\end{array}
$$
where again $e^6$ denotes the 1-form $dt$ evaluated at $t=0$.
Moreover,
$$
\begin{array} {rcl}
(\nabla_{\!\frac{\partial}{\partial t}} \Omega^1_2)_{\mid_{t=0}}
\!\!&\!\!\!=\!\!\!&\!\! \frac{(r^2+4)(r^4-2r^2+16)}{8}(e^{12} -
e^{34}),\\[7pt]
(\nabla_{\!\frac{\partial}{\partial t}} \Omega^1_5)_{\mid_{t=0}}
\!\!&\!\!\!=\!\!\!&\!\! -\frac{(r^2+4)(5r^4-8r^2+48)}{8}(e^{15} +
e^{46}),\\[7pt]
(\nabla_{\!\frac{\partial}{\partial t}} \Omega^2_3)_{\mid_{t=0}}
\!\!&\!\!\!=\!\!\!&\!\! -\frac{r^2+4}{4} \left( (r+4)(r-4) e^{14}
- (r^4-3r^2+32)e^{23} + (r^4-4r^2+48) e^{56} \right),\\[7pt]
(\nabla_{\!\frac{\partial}{\partial t}} \Omega^2_6)_{\mid_{t=0}}
\!\!&\!\!\!=\!\!\!&\!\! -\frac{(r^2+4)(5r^4-8r^2+48)}{8}(e^{26} -
e^{35}).
\end{array}
$$
A direct calculation shows that, for each $r$, eight of the twelve
2-forms above are linearly independent.

Finally, for the the family \eqref{Azero-ecus-1}, a solution of the
hypo evolution equations is given by
$$
\begin{array} {lll}
\eta(t) \!\!&\!\!\!= \frac 12 f'(t) e^5, \quad\quad\quad &
\omega_1 (t) = \frac 12 f^2(t) f'(t) \,  e^{12} + \frac 2{f'(t)} \,  e^{34},\\[6pt]
\omega_2(t) \!\!&\!\!\!=  \frac12 f^2(t) f'(t)\, e^{13} - \frac
2{f'(t)}\, e^{24}, \quad\quad\quad & \omega_3 (t) =  f(t) \, (e^{14}
+ e^{23}),
\end{array}
$$
where $f(t)$ satisfies the differential equation
$$
12 r^2 +f {f'}^4 +f^2 {f'}^2 f'' =0
$$
with initial conditions $f(0) = 1$ and $f'(0)=2$. Equivalently,
$f(t)$ must satisfy the first order differential equation
$$
f'(t)=\frac{2}{f(t)} \sqrt{1+r^2-r^2f^3(t)}
$$
with initial conditions $f(0) = 1$, so there exists a unique
solution $f(t)$ defined on some open neighbourhood $I$ around $t=0$.
One can prove that the holonomy of the resulting metric on
$H_2\times I$ is again $SU(3)$. \QED\medskip

\section{Metrics with holonomy $G_2$}  \label{half-flat}

Let $H$ be a simply connected solvable Lie group of dimension $5$
with a left-invariant hypo-contact structure. In order to prove
Theorem \ref{G2-metrics}, we study first the induced  half-flat
structures on the total space of a circle bundle over $H$. In
particular, we will show that many hypo-contact structures on $H$
define not only the natural half-flat structure on  the trivial
bundle $H_5 \times \R$ but also another half-flat structure on a
non-trivial  $S^1$-bundle, which allows us to construct  a metric
with holonomy $G_2$.

Let us recall that Hitchin in \cite{H} proved that if $M$ is a
$6$-manifold with a half-flat structure $(F,\Psi_+,\Psi_-)$ which
belongs to a family $(F(t),\Psi_+(t),\Psi_-(t))$ of half-flat
structures on $M$, for some real parameter $t$ lying in some
interval $I=(t_-, t_+)$, satisfying the evolution equations
\begin{equation} \label{Hitchineveq}
\left\{
\begin{array} {l}
\partial_t \Psi_+(t) = \hat d F(t),\\[3pt]
F(t) \wedge \partial_t (F(t)) = - \hat d \Psi_-(t),
\end{array} \right.
\end{equation}
then $M\times I$ has a Riemannian metric whose holonomy is contained
in $G_2$. In fact, it is easy to check that the $4$-forms $\varphi$
and $* \varphi$ given by
$$
\varphi = F(t) \wedge dt + \Psi_+(t), \quad * \varphi = \psi_-(t)
\wedge dt+ \frac{1}{2} F(t)^2,
$$
are closed.

Next, we show that a solution of (hypo) evolution equations produces
a solution of Hitchin evolution equations. Let $N$ be a $5$-manifold
with a hypo structure $(\eta, \omega_i)$ which belongs to a
one-parameter family of hypo structures $(\eta(t),\omega_i(t))$, for
some real parameter $t \in I$, satisfying the (hypo) evolution
equations \eqref{vol-1}. Then, we know that an integrable
$SU(3)$-structure $(F,\Psi_+,\Psi_-)$ on $M=N \times I$ is given by
$$
F = \eta(t) \wedge dt + \omega_3(t),\quad \Psi = (\omega_1(t) + i
\omega_2(t)) \wedge (\eta(t) + i dt).
$$
\noindent On the other hand, Proposition
\ref{from-hypo-to-half-flat} implies that the $SU(3)$-structure
$(F,\Psi_+,$ $\Psi_-)$ on $M=N \times \R$ given by
$$
F = \lambda\, \omega_1 + \mu\, \omega_2 + \eta \wedge e^{6},\quad
\Psi_+ = ( - \mu\, \omega_1  + \lambda\, \omega_2)  \wedge \eta -
\omega_3 \wedge e^6,\quad \Psi_- = (- \mu\, \omega_1 + \lambda\,
\omega_2)  \wedge e^6  + \omega_3 \wedge \eta,
$$
is half-flat for $\lambda$, $\mu \in \R$ with $\lambda^2 +\mu^2=1$.
Moreover, using again Proposition \ref{from-hypo-to-half-flat}, we
have the one-parameter family of half-flat structures
$(F(t),\Psi_+(t),\Psi_-(t))$ on $M=N \times \R$ defined by
\begin{equation} \label{hypo-Hitchin}
\left\{
\begin{array} {l}
F(t) = \lambda\, \omega_1(t) + \mu\, \omega_2(t) + \eta(t) \wedge
e^{6},\\[3pt]
\Psi_+(t) = ( - \mu\, \omega_1(t)  + \lambda\, \omega_2(t))  \wedge
\eta(t) - \omega_3(t) \wedge
e^6,\\[3pt]
\Psi_- (t) = (- \mu\, \omega_1(t) + \lambda\, \omega_2(t))  \wedge
e^6 + \omega_3(t) \wedge \eta(t),
\end {array} \right.
\end{equation}
where $e^6 (t) = e^6$, for any $t$.

\begin{proposition} The family $(F(t),\Psi_+(t),\Psi_-(t))$ of half-flat structures
on $M=N \times \R$ given by \eqref{hypo-Hitchin} is a solution of
the Hitchin evolution equations \eqref{Hitchineveq}.
\end{proposition}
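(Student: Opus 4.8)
The plan is to verify the two Hitchin evolution equations \eqref{Hitchineveq} directly, by substituting the explicit family \eqref{hypo-Hitchin} and reducing everything to the hypo evolution equations \eqref{vol-1}. The only structural facts I will use are that $e^6$ is closed and independent of $t$, so that on the forms pulled back from $N$ the operator $\hat d$ acts as the exterior differential on $N$; the $SU(2)$-relations $\omega_i\wedge\omega_j=\delta_{ij}v$ together with $\lambda^2+\mu^2=1$; and the hypo closedness condition $\hat d\omega_3=0$.

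For the first equation, $\partial_t\Psi_+=\hat dF$, I would expand both sides. On one hand, $\hat dF=\lambda\,\hat d\omega_1+\mu\,\hat d\omega_2+\hat d\eta\wedge e^6$. On the other, differentiating $\Psi_+=(-\mu\,\omega_1+\lambda\,\omega_2)\wedge\eta-\omega_3\wedge e^6$ in $t$ gives $\partial_t\Psi_+=-\mu\,\partial_t(\omega_1\wedge\eta)+\lambda\,\partial_t(\omega_2\wedge\eta)-\partial_t\omega_3\wedge e^6$. The three equations in \eqref{vol-1} now replace $\partial_t(\omega_1\wedge\eta)$ by $-\hat d\omega_2$, $\partial_t(\omega_2\wedge\eta)$ by $\hat d\omega_1$, and $\partial_t\omega_3$ by $-\hat d\eta$, turning the right-hand side into $\lambda\,\hat d\omega_1+\mu\,\hat d\omega_2+\hat d\eta\wedge e^6$, which matches $\hat dF$.

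For the second equation, $F\wedge\partial_tF=-\hat d\Psi_-$, the key observation is to rewrite the left-hand side as $\frac{1}{2}\partial_t(F\wedge F)$, so that the individual derivatives $\partial_t\omega_i$, which \eqref{vol-1} does not supply, never occur alone. Expanding $F=\lambda\,\omega_1+\mu\,\omega_2+\eta\wedge e^6$ and using $\omega_1\wedge\omega_2=0$, $\omega_i\wedge\omega_i=v$, $\lambda^2+\mu^2=1$ and $(\eta\wedge e^6)\wedge(\eta\wedge e^6)=0$, one finds $F\wedge F=v+2(\lambda\,\omega_1+\mu\,\omega_2)\wedge\eta\wedge e^6$. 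Differentiating yields $\frac{1}{2}\partial_t(F\wedge F)=\frac{1}{2}\partial_t v+\bigl(\lambda\,\partial_t(\omega_1\wedge\eta)+\mu\,\partial_t(\omega_2\wedge\eta)\bigr)\wedge e^6$. Writing $v=\omega_3\wedge\omega_3$ gives $\frac{1}{2}\partial_t v=\omega_3\wedge\partial_t\omega_3=-\omega_3\wedge\hat d\eta$, while \eqref{vol-1} turns the bracket into $\mu\,\hat d\omega_1-\lambda\,\hat d\omega_2$. Comparing with $-\hat d\Psi_-=(\mu\,\hat d\omega_1-\lambda\,\hat d\omega_2)\wedge e^6-\omega_3\wedge\hat d\eta$, where $\hat d\omega_3=0$ is used to discard the term $\hat d\omega_3\wedge\eta$, shows that the two sides coincide.

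The computation is essentially routine bookkeeping of signs and wedge orders; the one step that is not purely mechanical is the reformulation $F\wedge\partial_tF=\frac{1}{2}\partial_t(F\wedge F)$ together with $\partial_t v=2\,\omega_3\wedge\partial_t\omega_3$, which is exactly what makes the second Hitchin equation accessible from \eqref{vol-1} without knowing the separate evolutions of $\omega_1$ and $\omega_2$. I expect the only points requiring care to be the consistency of the volume form, $v=\omega_3\wedge\omega_3=\omega_1\wedge\omega_1$ across the family, and the systematic use of the hypo condition $\hat d\omega_3=0$.
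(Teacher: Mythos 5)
Your proposal is correct and follows essentially the same route as the paper: verify the first Hitchin equation by direct substitution of the hypo evolution equations, and handle the second by rewriting $F\wedge\partial_t F$ as $\frac{1}{2}\partial_t(F\wedge F)$, using $(\lambda\,\omega_1+\mu\,\omega_2)^2=\omega_3\wedge\omega_3$ (from \eqref{wedge-i-j} and $\lambda^2+\mu^2=1$) together with $\partial_t\omega_3=-\hat d\eta$ and $\hat d\omega_3=0$. The only difference is organizational — the paper isolates the identity $\hat d(\omega_3\wedge\eta)=-\frac12\partial_t((\lambda\,\omega_1+\mu\,\omega_2)^2)$ as the equivalent condition and then checks it, while you expand and compare both sides directly — and your treatment of the first equation is in fact slightly more careful about the term $\hat d\eta\wedge e^6$.
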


\begin{proof}
Clearly, $\hat d F(t)= \lambda\, \hat d\omega_1 (t) + \mu\,  \hat
d\omega_2 (t)$ and from equations \eqref{vol-1} we have
$\partial_t{\Psi_+} (t)  = \hat d F (t)$.
%%%$$
%%%\partial_t{\Psi_+} (t)  = -\mu\,\partial_t(\omega_1(t) \wedge\eta(t))+\lambda\,\partial_t(\omega_2(t)
%%%\wedge\eta(t))-\partial_t{\omega_3 (t)}\wedge e^6= \hat d F (t).
%%%$$
Moreover, since $\hat d \Psi_-(t) = (- \mu\, \hat d \omega_1(t) +
\lambda\, \hat d \omega_2 (t)) \wedge e^6 +  \hat d (\omega_3 (t)
\wedge \eta(t))$ and
$$F(t) \wedge \partial_t  F(t) = \frac 12 \partial_t (( \lambda\, \omega_1 (t) + \mu\, \omega_2(t)) ^2)
+ \left[ \lambda\,  \partial_t  (\omega_1 (t) \wedge \eta(t))  +
\mu\, \partial_t  (\omega_2 (t) \wedge \eta(t)) \right] \wedge
e^6,$$ the second equation in \eqref{Hitchineveq} is satisfied if
and only if
$$
\hat d (\omega_3 (t)\wedge \eta(t)) =  - \frac 12 \partial_t ((
\lambda\, \omega_1 (t) + \mu\, \omega_2(t)) ^2).$$ But, from
\eqref{wedge-i-j} and $\lambda^2+\mu^2=1$ we get $(\lambda\,
\omega_1(t)+\mu\, \omega_2(t))^2 =\omega_3(t)\wedge\omega_3(t)$, and
therefore
$$
\frac 12\partial_t((\lambda\, \omega_1(t)+\mu\, \omega_2(t))^2)=
\frac 12\partial_t(\omega_3(t) \wedge\omega_3(t))= \omega_3(t)
\wedge\partial_t\omega_3(t)=-
\omega_3(t)\wedge\hat{d}\eta(t)=-\hat{d}(\omega_3(t)\wedge\eta (t)).
$$
\end{proof}

We must notice that this result, which is also used in
\cite{ConTom}, implies that the holonomy of the resulting
$G_2$-metric on $M \times I$ is contained in $SU(3)$, because it is
actually a product metric. This fact justifies our study of
half-flat structures on  non-trivial circle bundles  (see
Remark~\ref{clasif-nontriv} below).

Let ${\mathfrak h}$ be a solvable $5$-dimensional Lie algebra with a
hypo structure $(\eta, \omega_1, \omega_2,\omega_3)$. Consider the
extension ${\mathfrak k} = {\mathfrak h} \oplus \R e_6$, with $e_6$
such that the Jacobi identity is satisfied. The $SU(3)$-structure on
$\mathfrak k$ defined by
$$
F = \lambda\, \omega_1 + \mu\, \omega_2 + e^{56},\quad \Psi_+ = ( -
\mu\, \omega_1  + \lambda\, \omega_2)  \wedge e^5 - \omega_3 \wedge
e^6,\quad \Psi_- = (- \mu\, \omega_1 + \lambda\, \omega_2)  \wedge
e^6 + \omega_3 \wedge e^5,
$$
with $\lambda^2 + \mu^2 =1$, is half-flat if and only if $d(F \wedge
F) = 2 (\lambda\, \omega_1 + \mu\, \omega_2)  \wedge e^5 \wedge (d
e^6)= 0$ and $d(\Psi_+) =  - \omega_3 \wedge (d e^6)=0$. From these
equations one has that  $$ d e^6 = a_1 e^{12} + a_2 e^{13} + a_3
(e^{14} - e^{23}) + a_5 e^{24}  + a_6 e^{34},
$$
with $\lambda (a_1 +  a_6) + \mu (a_2 - a_5) =0$. Then $d(d e^6) =0$
only in the following cases:
\begin{enumerate}
\item $d e^6 = 0$ for all the families;
\item $d e^6 = a_1 e^{12} + a_2 e^{13}$, with $\lambda a_1 + \mu a_2 =0$ for the family
\eqref{Azero-ecus-1};
\item $d e^6 = a_2 ( - \frac{a}{r} e^{12} + e^{13})$ for the family
\eqref{Azero-ecus-3} with $\mu =\frac{a}{r} \lambda$;
\item $d e^6 = a_1 e^{12}$ for the family \eqref{Azero-ecus-4} with $\lambda
=0$;
\item $d e^6 = a_2 e^{13}$ for the family \eqref{Azero-ecus-5} with
$\mu=0$;
\item $d e^6 = a_1 (e^{12} + \frac{a}{r} e^{13})$ for the family \eqref{Azero-ecus-7} with $\lambda =
- \frac{a}{r} \mu$.
\end{enumerate}

\begin{remark}\label{clasif-nontriv}
Notice that the previous cases $2$--$6$ give a classification of the
half-flat structures on ${\mathfrak k}$ which are a non-trivial
extension of the hypo structure on ${\mathfrak h}$. \end{remark}

\medskip

\noindent{\bf Proof of Theorem~\ref{G2-metrics}~:} For the
non-trivial $S^1$-bundle $K$ associated to  the family
\eqref{Azero-ecus-4}  with $\lambda =0, \mu = 1$ and $d e^6 = a_1
e^{12}$, one has that a solution of the evolution equations
\eqref{Hitchineveq}  is given by
$$
\begin{array}{l}
F(t) = f(t)(e^{13} - e^{24}) + k(t) h(t) e^{56},\\[6pt]
\Psi_+ (t) = -f(t)^2 k(t)^{2} e^{125} - e^{345} - f(t) h(t) (e^{146} + e^{236}),\\[5pt]
\Psi_-(t) =\displaystyle -f(t)^2 h(t) k(t) e^{126} -
\frac{h(t)}{k(t)} e^{346} + k(t) f(t) (e^{145} + e^{235}),
\end{array}
$$
where $f(t), k(t), h(t)$ are functions satisfying the system of
ordinary differential equations
$$
\left \{
\begin{array} {l}
(f h)'  =  2 k h,\\[5pt]
(f^2 k^2)'  = a_1 k h - 2(a^2+b^2) f,\\[5pt]
f f' =  2 k f + \frac{a_1 h}{2k},
\end{array}
\right.
$$
and the initial conditions $f(0) = k(0) = h(0) =1$. This system is
easily seen to be equivalent to
\begin{equation}\label{systemfamili25-hit}
f'=  \displaystyle  2k+\frac{a_1 h}{2kf},\quad\  h'  =
\displaystyle-\frac{a_1h^2}{2kf^2},\quad\
k'=\displaystyle-\frac{a^2+b^2+2k^3}{kf},
\end{equation}
and thus by the theorem on existence of solutions for a system of
ordinary differential equations, there exists an open interval $I$
containing $t =0$ on which the previous system admits a unique
solution $(f(t), k(t),h(t))$ satisfying the initial condition $f(0)
= k(0) = h(0) = 1$.

For $a = b=0$, the $5$-dimensional hypo-contact Lie algebra is the
nilpotent Lie algebra ${\mathfrak h}_1$ and a solution in this case
is given by
$$
a_1 = 2, \quad f(t) = (1+5t)^{\frac{3}{5}}, \quad h(t)
=(1+5t)^{-\frac{1}{5}}, \quad k(t) = (1 + 5t)^{-\frac {2}{5}}.
$$
The corresponding metric with holonomy $G_2$ that we obtain is the
one found in \cite{CFino}.

For $a^2 + b^2 \neq 0$ and $a_1 =2$, the corresponding metric $g(t)$
on $K \times I$, where $K$ has  structure equations
$$
\left\{
\begin{array}{rl}
d e^1=\!\!&\!\! d e^2= 0,\\[4pt]
d e^3=\!\!&\!\! a e^{13} + b e^{14} - b e^{23} + a e^{24} - (a^2+b^2) e^{25},\\[4pt]
d e^4=\!\!&\!\! b e^{13} - a e^{14} - (a^2+b^2) e^{15} + a e^{23} + b e^{24},\\[4pt]
d e^5 =\!\!&\!\! -2 e^{14} - 2 e^{23},\\[4pt]
d e^6 =\!\!&\!\! 2 e^{12},
\end{array}
\right.$$ is given by
$$
g(t) = f(t)^2 k(t)\bigl( (e^1)^2 + (e^2)^2\bigr) +
\frac{1}{k(t)}\bigl((e^3)^2 +  (e^4)^2\bigr) + k(t)^{2} (e^5)^2 +
h(t)^2 (e^6)^2 + (dt)^2.
$$
The metric $g(t)$ has holonomy $G_2$ for $(a, b)$  in  a small
neighbourghood around $(0,0)$, since  the solution $(f(t), k(t),
h(t))$ of the system \eqref{systemfamili25-hit} depends continuously
on the parameters $a$ and $b$, and for $a=b=0$ the holonomy of the
corresponding metric is $G_2$.

\smallskip

For the non-trivial extension on the Lie group $\widetilde K$
associated to  the family \eqref{Azero-ecus-5}   with $\mu =0,
\lambda = 1$ and  $d e^6 = a_2 e^{13}$, one has that a solution of
the evolution equations \eqref{Hitchineveq}  is given by
$$
\begin{array}{l}
F(t) = f(t)(e^{12} + e^{34}) + k(t) h(t) e^{56},\\[6pt]
\Psi_+ (t) = f(t)^2 k(t)^{2} e^{135} - e^{245} - f(t)  h(t) (e^{146} + e^{236}),\\[5pt]
\Psi_-(t) = \displaystyle f(t)^2 h(t) k(t)  e^{136} -
\frac{h(t)}{k(t)} e^{246} + f(t) k(t)(e^{145} + e^{235}),
\end{array}
$$
where $f(t), k(t), h(t)$ are functions satisfying  the system of
ordinary differential equations
\begin{equation} \label{systemfamili26-hit}
f'  = \displaystyle 2k-\frac{a_2 h}{2kf},\quad\  h'  =
\displaystyle\frac{a_2h^2}{2kf^2},\quad\
k'=-\displaystyle\frac{r^2+4k^3}{2kf},
\end{equation}
and the initial conditions $f(0) = k(0) = h(0) =1$. Thus by the
theorem on existence of solutions for a system of ordinary
differential equations, there is an open interval $I$ containing $t
=0$ on which the previous system has a unique solution $(f(t),
k(t),h(t))$ satisfying the initial condition $f(0)= k(0) = h(0) =
1$. Since the system \eqref{systemfamili26-hit} for $r=0$  and
$a_2=-2$ coincides with the system \eqref{systemfamili25-hit} for
$a=b=0$ and $a_1=2$, we can  use the same argument as for the
previous family to prove that in a small neighbourghood around $0$
the corresponding metric $\tilde g(t)$ on $\widetilde K \times I$
has holonomy $G_2$. In this case, $\widetilde K$ has structure
equations
$$
\left\{
\begin{array}{llll}
\!\!\!&\!\!\! d e^1= 0, \ \ & d e^2=  r e^{34} + {r^2 \over 2}  e^{35}, \ \quad & d e^3=  r e^{13}, \\[4pt]
\!\!\!&\!\!\! d e^4= - {r^2\over 2} e^{15} + r e^{23}, \ \ & d e^5 =
-2 e^{14} - 2 e^{23}, \ \quad & d e^6 = -2 e^{13},
\end{array}
\right.
$$
%%%$$
%%%\left\{
%%%\begin{array}{rl}
%%%d e^1=\!\!&\!\! 0,\\[4pt]
%%%d e^2=\!\!&\!\!  r e^{34} + {r^2 \over 2}  e^{35},\\[4pt]
%%%d e^3=\!\!&\!\!  r e^{13},\\[4pt]
%%%d e^4=\!\!&\!\! - {r^2\over 2} e^{15} + r e^{23},\\[4pt]
%%%d e^5 =\!\!&\!\! -2 e^{14} - 2 e^{23},\\[4pt]
%%%d e^6 =\!\!&\!\! -2 e^{13},
%%%\end{array}
%%%\right.
%%%$$
and the metric $\tilde g(t)$ is given, in terms of the basis $(e^1,
\ldots, e^6, dt)$  by
$$
\tilde g(t) = f(t)^2 k(t)\bigl( (e^1)^2 + (e^3)^2\bigr) +
\frac{1}{k(t)}\bigl((e^2)^2 + (e^4)^2\bigr) + k(t)^{2} (e^5)^2 +
h(t)^2 (e^6)^2 + (dt)^2.
$$
\QED

\begin{remark}
Note that the $6$-dimensional solvable Lie groups $K$ (with $a^2 +
b^2 \neq 0$) and $\widetilde K$ (with $r \neq 0$) are not
isomorphic, since ${\mathfrak k}^2=0$ for the first family while
$\tilde {\mathfrak k}^2 \neq 0$ for  the second one. For $a=b=0$ and
$r =0$ one gets the  same $6$-dimensional  nilpotent Lie group.
Moreover, taking into account the explicit isomorphisms given in the
proof of Theorem~\ref{clasification}, one can see that for any
$(a,b)\not= (0,0)$ the solvable Lie algebra ${\mathfrak k}$ is
isomorphic to
$$
d\alpha^1=-\alpha^{14},\quad d\alpha^2=-\alpha^{25},\quad
d\alpha^3=\alpha^{34}+\alpha^{35},\quad d\alpha^4=d\alpha^5=0,\quad
d\alpha^6=\alpha^{45},
$$
and that for any $r\not=0$ the Lie algebra $\widetilde {\mathfrak
k}$ is isomorphic to the product ${\mathfrak h}_5\times \mathbb{R}$,
${\mathfrak h}_5$ being the solvable Lie algebra of
Theorem~\ref{clasification}.
\end{remark}

\begin{remark}
From the proof of Theorem 1.3 above, we see that one can ensure that
the holonomy of our examples equals $G_2$ when the parameters
$a,b,r$ are sufficiently close to $0$. To our knowledge, there is no
similar result in the literature about existence of metrics of
holonomy equal to $G_2$ neither on $K\times I$ nor on $\widetilde
K\times I$, so in this sense our result provides new spaces of $G_2$
holonomy.
\end{remark}

\medskip
\noindent {\bf Acknowledgments.}  We would like to thank Stefan
Ivanov and Simon Salamon
 for useful discussions on the hypo-contact structures.
This work has been partially supported through grants MCyT (Spain)
MTM2005-08757-C04-02, Project UPV 00127.310-E-15909/2004,  Project
MIUR (Italy) \lq \lq Riemannian Metrics  and Differentiable
Manifolds" and by GNSAGA.

{\small

\vspace{0.15cm}

\noindent{\sf L.C. de Andr\'es and M. Fern\'andez:} Departamento de
Matem\'aticas, Facultad de Ciencia  Tecnolog\'{\i}a, Universidad del Pa\'{\i}s
Vasco, Apartado 644, 48080 Bilbao, Spain.

{\sl E-mail L.C. de Andr\'es:} luisc.deandres@ehu.es

{\sl E-mail M. Fern\'andez:} marisa.fernandez@ehu.es

\vspace{0.15cm}

\noindent{\sf A. Fino:} Dipartimento di Matematica, Universit\`a di
Torino, Via Carlo Alberto 10, Torino, Italy.

{\sl E-mail:} annamaria.fino@unito.it

\vspace{0.15cm}

\noindent{\sf L. Ugarte:} Departamento de Matem\'aticas \!-\!
I.U.M.A., Universidad de Zaragoza, Campus Plaza San Francisco, 50009
Zaragoza, Spain.

{\sl E-mail:} ugarte@unizar.es}

\end{document}